\newcommand{\bea}{\begin{eqnarray}} 
\newcommand{\eea}{\end{eqnarray}} 
\newcommand{\bee}{\begin{eqnarray*}} 
\newcommand{\eee}{\end{eqnarray*}} 
\newcommand{\al}{\begin{align*}} 
\newcommand{\eal}{\end{align*}} 
\newcommand{\be}{\begin{equation}} 
\newcommand{\ee}{\end{equation}} 
\newcommand{\bem}{\begin{pmatrix}} 
\newcommand{\eem}{\end{pmatrix}}
\def\a{\alpha}
\def\f{\phi}  
\def\vf{\varphi}
\def\D{\Delta}
\newcolumntype{R}{ >{$}r <{$}}
\newcolumntype{C}{ >{$}c <{$}}
\newcolumntype{L}{ >{$}l <{$}}
\newcolumntype{F}{>{\centering\arraybackslash}m{1.5cm}}
\newcommand{\mc}[1]{\mathcal{#1}}
\newcommand{\ms}[1]{\mathscr{#1}}
\newcommand{\comment}[1]{}
\DeclareSymbolFont{cyrletters}{OT2}{wncyr}{m}{n}\DeclareMathSymbol{\Sha}{\mathalpha}{cyrletters}{"58}
\newcommand{\RR}{{\mathbb R}}
\newcommand{\CC}{{\mathbb C}}
\newcommand{\PP}{{\mathbb P}}
\newcommand{\ZZ}{{\mathbb Z}}
\newcommand{\QQ}{{\mathbb Q}}
\newcommand{\HH}{{\mathbb H}}
\newcommand{\JJ}{{\mathbb J}}
\newcommand{\End}{\operatorname{End}}
\newcommand{\Irr}{\operatorname{Irr}}
\newcommand{\Span}{\operatorname{Span}}
\newcommand{\tr}{\operatorname{{tr}}}
\newcommand{\num}{\operatorname{num}}
\newcommand{\den}{\operatorname{den}}
\newcommand{\indo}{\iota}
\newcommand{\sk}{{\rm sk}}
\newcommand{\tor}{{\rm tor}}
\newcommand{\opt}{{\rm opt}}
\newcommand{\eop}{{\rm Eis}}
\newcommand{\Eis}{{\rm Eis}}
\newcommand{\xmod}{{\rm \;mod\;}}
\newcommand{\Th}{\Theta}
\newcommand{\Nm}{\operatorname{N}}
\newcommand{\opc}{{\rm c}}
\newcommand{\SL}{\operatorname{\textsl{SL}}}      
\newcommand{\mpt}{\widetilde{\SL}_2}      
\newcommand{\GL}{{\textsl{GL}}}      
\newcommand{\sM}{\mathsf{M}}
\newcommand{\se}{\mathsf{e}}	
\newcommand{\sg}{\mathsf{g}}	
\newcommand{\sZNZ}{{\mathbb{Z}/N\mathbb{Z}}}
\newcommand{\sG}{\mathsf{G}}
\newcommand{\GammaOJ}{\Gamma_0^{\rm J}}
\newcommand{\ON}{\textsf{O'N}}	
\renewcommand{\Th}{\textsf{Th}}		
\newcommand{\HGH}{{H}^{\rm Hur}}	
\newcommand{\HGHO}{{H}^{{\rm Hur},0}}	
\newcommand{\HCE}{{H}^{\rm Coh}}	
\newcommand{\sHGH}{\ms{H}^{\rm Hur}}	
\newcommand{\sHCE}{\ms{H}^{\rm Coh}}	
\newcommand{\sHR}{\ms{H}^{\rm Rad}}	
\newcommand{\dC}{d^{\rm Coh}}
\newcommand{\dH}{d^{\rm Hur}}
\newcommand{\nC}{n^{\rm Coh}}
\newcommand{\nH}{n^{\rm Hur}}
\newtheorem{thm}{Theorem}[subsection]
\newtheorem*{thm*}{Theorem}
\newtheorem{cor}[thm]{Corollary}
\newtheorem{lem}[thm]{Lemma}
\newtheorem{pro}[thm]{Proposition}
\theoremstyle{definition}
\theoremstyle{remark}
\newtheorem{rmk}[thm]{Remark}
\numberwithin{equation}{subsection}
\begin{document}

\setstretch{1.26}

\title{
\vspace{-35pt}
\textsc{\huge{ 
{C}lass {N}umbers, {C}yclic {S}imple {G}roups and {A}rithmetic
}}
    }

\renewcommand{\thefootnote}{\fnsymbol{footnote}} 
\footnotetext{\emph{MSC2020:} 11F22, 11F37, 11F50, 11G05, 11G40.}     


\renewcommand{\thefootnote}{\arabic{footnote}} 

\author[1,2]{Miranda C.~N.\ Cheng\thanks{mcheng@uva.nl}\thanks{mcheng@gate.sinica.edu.tw}}
\author[2]{John F.~R.\ Duncan\thanks{jduncan@gate.sinica.edu.tw}}
\author[3]{Michael H.\ Mertens\thanks{mmertens@math.uni-koeln.de}}

\affil[1]{Institute of Physics and Korteweg-de Vries Institute for Mathematics, University of Amsterdam, Amsterdam, the Netherlands.\footnote{On leave from CNRS.}}
\affil[2]{Institute of Mathematics, Academia Sinica, Taipei, Taiwan.}
\affil[3]{Department Mathematik/Informatik, Abteilung Mathematik, Universit\"at zu K\"oln, Weyertal 86--90, D-50931 K\"oln, Germany.}

\date{} 

\maketitle

\abstract{
Here we initiate a program to study relationships between finite groups and arithmetic-geometric invariants in a systematic way. 
To do this we first introduce a notion of optimal module for a finite group in the setting of holomorphic mock Jacobi forms. 
Then we 
classify optimal modules
for the cyclic groups of prime order, in the special case of weight $2$ and index $1$, where class numbers of imaginary quadratic fields play an important role.
Finally we exhibit a connection between the classification we establish and the arithmetic geometry of imaginary quadratic twists of modular curves of prime level.
}

\clearpage

\tableofcontents

\section{Introduction}\label{sec:int}

Relationships between representations of sporadic simple groups and arithmetic-geometric invariants of various kinds have recently appeared in the literature. See \cite{MR4029712,MR4291251,MR4230542} for examples of this, and see \cite{2017NatCo...8..670D,MR4139238} for related expository accounts. 
Our objective here is to initiate a systematic theory of such interrelationships.

\subsection{Motivation}\label{sec:int-mtv}

To motivate our approach we begin by recalling that, in each of the 
abovementioned works, the starting point is reminiscent of monstrous moonshine. More specifically, a (weakly holomorphic) modular form 
$F_\sg$ 
is assigned to each conjugacy class $[\sg]$ of a finite group $\sG$, and it is shown that 
there exists a (virtual) graded $\sG$-module $V=\bigoplus_n V_n$  with the property that
\begin{gather}\label{eqn:int-mtv:FgtrfVn}
F_\sg(\tau)=\sum_n \tr(\sg|V_n)q^n,
\end{gather}
for each $\sg\in {\color{black}\sG}$, where $q=e^{2\pi i \tau}$. Furthermore, the $F_\sg$ are distinguished, in that they satisfy an optimality property that naturally generalizes the principal modulus property of monstrous moonshine{\color{black}:
A principal modulus can be characterized as the unique modular function which has a simple pole at the cusp $\infty$ and nowhere else and $0$ as constant term in its Fourier expansion.  Generalizing this, }
the functions
$F_\sg$
satisfy a growth condition
\begin{gather}\label{eqn:int-mtv:FgP}
	F_\sg(\tau)=P(q^{-1}) +O(q)
\end{gather}
as $\Im(\tau)\to \infty$, for a certain polynomial $P(x)\in\ZZ[x]$ that is independent of $\sg$, and 
also remain bounded near (as many as possible of) the non-infinite cusps of their invariance groups.
(See the cited works for precise formulations.)

For example, in 
\cite{MR4230542}, where $\sG=\Th$ is the sporadic simple group of Thompson, we have $P(x)=6x^5$ in (\ref{eqn:int-mtv:FgP}).
In the setup of \cite{MR4291251}, where $\sG=\ON$ is the (non-monstrous) sporadic simple group of O'Nan, $P(x)=-x^4+2$.
In  monstrous moonshine $P(x)=x$, and optimality, as we have just defined it, is {\color{black}---as mentioned above---} exactly the principal modulus property.

We refer to \cite{Anagiannis:2018jqf,mnstmlts,Duncan2020} for general reviews of moonshine, and refer to \S~6.3 of \cite{MUM}, \S\S~1.5--1.6 of \cite{pmo}, and also \cite{MR4139238}, for more detailed expository discussions of optimality. 

A key difference between monstrous moonshine and the analyses of \cite{MR4029712,MR4291251,MR4230542} is the inclusion of results relating the $\sG$-module $V$ in (\ref{eqn:int-mtv:FgtrfVn}) to arithmetic invariants of elliptic curves in the latter works. For an example of this, let $E$ be the elliptic curve over $\QQ$ defined by $y^2 = x^3 - 12987x -  263466$ (this is Elliptic Curve 15.a5 in \cite{lmfdb}), and for $D$ an integer let $E\otimes D$ denote the $D$-th quadratic twist of $E$, so that 
$E\otimes D$ is defined by 
\begin{gather}\label{eqn:int-mtv:EotimesD}
	y^2 = x^3 - 12987D^2x -  263466D^3.
\end{gather}
Also, recall that $D\in\ZZ$ is 
said to be a {fundamental discriminant} if it coincides with the discriminant of $\QQ(\sqrt{D})$. 
Now, taking $\sG=\ON$, letting $F_\sg$ 
be as in \cite{MR4291251}, and letting $D$ be a negative fundamental discriminant that is $1\xmod 3$, and either $2\xmod 5$ or $3\xmod 5$, it follows from Theorem 1.4 of op.\ cit.\ that 
there are only finitely many rational number solutions to (\ref{eqn:int-mtv:EotimesD}), 
unless 
\begin{gather}\label{eqn:int-mtv:cgDhDequiv0mod5}
\tr(\sg|V_{|D|})+h(D)\equiv 0\xmod 5,
\end{gather} 
where $\sg$ is an element of order $3$ in $\sG$, and $h(D)$ is the class number of $\QQ(\sqrt{D})$. As is illustrated in 
detail in 
\S~7 of \cite{MR4139238}, a basic significance of this 
is that the obstruction (\ref{eqn:int-mtv:cgDhDequiv0mod5}) may be computed in terms of binary quadratic forms, whereas the computation of the rank of an elliptic curve is generally a more difficult task (cf.\ e.g.\ \cite{MR2238272}).

Stepping back for a moment, we may ask if 
such 
interrelationships
(\ref{eqn:int-mtv:EotimesD}--\ref{eqn:int-mtv:cgDhDequiv0mod5})
are isolated exceptional phenomena (like the sporadic simple groups themselves), 
or representative of a broader general theory (like the sporadic simple groups themselves). 
As we explain in more detail, both here and in forthcoming work \cite{radhur,hurm23}, 
the results we have obtained support a balance between both points of view.
To wit, the main results of \cite{radhur,hurm23} are counterparts to (\ref{eqn:int-mtv:FgtrfVn}--\ref{eqn:int-mtv:FgP}) 
which feature the sporadic simple Mathieu groups,
and thereby exemplify the exceptional aspect of the setup in which we work. 
However, we demonstrate here that these sporadic examples are representative of a general theory, by 
producing
an infinite family 
that involves all the cyclic groups of prime order (see Theorem \ref{thm:res-opt:tor}), and specializes, in some cases (see \S~\ref{sec:int-res}), the sporadic examples of \cite{radhur,hurm23}. 
In this work we formulate 
complements to (\ref{eqn:int-mtv:EotimesD}--\ref{eqn:int-mtv:cgDhDequiv0mod5}), for each of our cyclic group counterparts to (\ref{eqn:int-mtv:FgtrfVn}--\ref{eqn:int-mtv:FgP})
(see Theorem \ref{thm:res-art:N=arb} and Corollary \ref{cor:res-art:N=11}),
and we do similarly for sporadic simple Mathieu groups in \cite{radhur,hurm23}. In the latter works we also further extend the arithmetic side of the theory, by 
elucidating some more subtle relationships 
between 
group 
representations 
and arithmetic 
geometric invariants. 

\subsection{Methods}\label{sec:int-mth}

{\color{black} Previous works \cite{MR4291251,MR4230542} use the setting of modular forms of weight $\frac32$.  Through the general theory of Jacobi forms, the relevant aspects of which we recall in Section–\ref{sec:prp-jac}, this setting can be transformed in a natural way to Jacobi forms of weight $2$ and index $1$, which is the setting we choose for this work.} \footnote{As the Shimura correspondence relates modular forms of integer weight to modular forms of half-integer weight, so may the setup of 
\cite{MR4029712} be regarded as related to that of this work.} However, in contrast to 
\cite{MR4291251,MR4230542},
we start with a specific notion of optimality, 
and then seek compatible representations 
of finite simple groups, rather than the other way around.

We are interested in ``where the theory starts'', so we consider the ``simplest possible'' choices for $P$ in (\ref{eqn:int-mtv:FgP}); namely, the constant polynomials. 
This imposes two more points of contrast between our setup and that of \cite{MR4291251,MR4230542}. 
Firstly, it leads us to mock modular forms, since otherwise we are forced to take $F_\se=0$ in (\ref{eqn:int-mtv:FgtrfVn}), for $\se$ {\color{black} denoting} the identity in {\color{black} a given finite group} $\sG$
(and this forces $P=0$ in (\ref{eqn:int-mtv:FgP}) too).
Secondly, it motivates us to strengthen the notion of optimality we described in \S~\ref{sec:int-mtv} (cf.\ (\ref{eqn:int-mtv:FgP})), so as to require vanishing of the $F_\sg$ near the non-infinite cusps of their invariance groups (for otherwise there are ``too many'' choices for $F_\sg$ in (\ref{eqn:int-mtv:FgtrfVn}), as is illustrated below by (\ref{eqn:cln-fgp:phisg})).

Another point of contrast 
is that we actually work with (mock) Jacobi forms of weight $2$, 
rather than (mock) modular forms of weight $\frac32$. This is mainly because it allows us to more easily formulate the precise notions of optimality that we employ, but also because it allows us to attach forms of level $N$ to elements of order $N$ (rather than forms of level $4N$).

Thus, our focus in this work is on 
virtual graded $\sG$-modules $W=\bigoplus_D W_D$ (cf.\ (\ref{eqn:virtualgraded})), for $\sG$ a finite group, with the property that the associated {McKay--Thompson series},
\begin{gather}\label{eqn:int-mth:phiWsg}
	\phi^W_\sg(\tau,z):=\sum_{n,s\in\ZZ}\tr(\sg|W_{s^2-4n})q^ny^s,
\end{gather}
is a mock Jacobi form of weight $2$ and index $1$ for $\GammaOJ(o(\sg))$ (see Proposition \ref{pro:prp-fcs:msHNmock}), for each $\sg\in \sG$, where $o(\sg)$ denotes the order of $\sg$. 
The space of mock Jacobi forms of weight $2$ and index $1$ for $\GammaOJ(1)$ is one-dimensional, spanned by the Hurwitz class number generating function, $\sHGH=\sHGH_1$ (see (\ref{eqn:prp-cln:msH1}) and Proposition \ref{pro:prp-fcs:JJ21NJ21N}). 
So such modules $W$, supposing they exist, exhibit (rescaled) class numbers 
as dimensions of representations of finite groups.

Actually such modules abound, in the absence of further conditions on the McKay--Thompson series (\ref{eqn:int-mth:phiWsg}). 
To see this let $\sG$ be any finite group, and set 
\begin{gather}\label{eqn:cln-fgp:phisg}
\phi_\sg:=12\sHGH+
M\left(\sHGH_{o(\sg)}-
\indo(o(\sg))
\sHGH\right)
\end{gather} 
for $\sg\in\sG$, for a fixed integer $M$, where $\sHGH$ and $\sHGH_N$ are as defined in (\ref{eqn:prp-cln:HHurND}--\ref{eqn:prp-cln:msHN}), and 
$\indo(N)$ is 
the index of $\Gamma_0(N)$ in $\SL_2(\ZZ)$ (cf.\ (\ref{eqn:prp-cln:HN0})).
Then $\phi_\sg$ is a mock Jacobi form of weight $2$ and index $1$ for $\GammaOJ(o(\sg))$ by Proposition \ref{pro:prp-fcs:msHNmock}, and Thompson's characterization of virtual characters (cf.\ e.g.\ \cite{MR822245}) confirms the existence of a virtual graded $\sG$-module $W=\bigoplus_D W_D$ such that $\phi_\sg=\phi^W_\sg$ for each $\sg\in \sG$ (cf.\ (\ref{eqn:int-mth:phiWsg})), so long as $M$ is divisible by sufficiently large powers of the primes that divide $\#\sG$. 
\footnote{Note that the $12$ in (\ref{eqn:cln-fgp:phisg}) 
clears the denominators in the coefficients of $\sHGH$.}

For this reason, and with moonshine as motivation,
we restrict attention in this work to $\sG$-modules $W$ for which the associated mock Jacobi forms $\phi^W_\sg$ are {optimal}, in the sense sketched in \S~\ref{sec:int-mtv}, taking $P$ to be constant in (\ref{eqn:int-mtv:FgP}). That is, we require that
\begin{gather}\label{eqn:int-mth:trv}
\phi^W_\sg(\tau,z)=-\opc+O(q)
\end{gather} 
as $\Im(\tau)\to \infty$, for all $\sg\in\sG$ and $z\in\CC$, for some fixed integer $\opc$, and also require that 
the {theta-coefficients}
\begin{gather}\label{eqn:int-mth:hWgFS}
h^W_{\sg,r}(\tau) := \sum_{D\equiv r^2\xmod 4} \tr(\sg|W_D)q^{-\frac{D}4}
\end{gather}
of $\phi^W_\sg$
tend to $0$
as $\tau$ tends to any cusp of $\GammaOJ(o(\sg))$ other than the infinite one, 
for $r\in \{0,1\}$. 
We formulate this notion of optimality more carefully and more generally in \S~\ref{sec:prp-opt}.

Note that the connection between $\phi^W_\sg$ (\ref{eqn:int-mth:phiWsg}) and the $h^W_{\sg,r}$ (\ref{eqn:int-mth:hWgFS}) is that 
\begin{gather}\label{eqn:int-mth:thetadecomp_phiW}
\phi^W_\sg(\tau,z)= 
h^W_{\sg,0}(\tau)\theta_{1,0}(\tau,z)+h^W_{\sg,1}(\tau)\theta_{1,1}(\tau,z),
\end{gather}
where $\theta_{1,r}$ is as defined in (\ref{eqn:prp-jac:thetadecomp}). Also, the 
weight $\frac32$ (mock) modular form corresponding to $\phi^W_\sg$ is the function
\begin{gather}
\check{h}^W_\sg(\tau):=h^W_{\sg,0}(4\tau)+h^W_{\sg,1}(4\tau),
\end{gather}
so the $\check{h}^W_\sg$ for $\sg\in \sG$ 
are 
more direct 
counterparts to the $F_\sg$ of (\ref{eqn:int-mtv:FgtrfVn}). 

In monstrous, penumbral and umbral moonshine the optimal forms arising are naturally associated to genus zero subgroups of $\SL_2(\RR)$.  (See \cite{pmz,pmo} for more on this in the {\color{black} case of penumbral moonshine}, and see \cite{MR4127159} for a precise formulation and proof in the {\color{black} case of  umbral moonshine}.) So there are only finitely many examples according to \cite{MR604632}. By contrast, optimal mock Jacobi forms of weight $2$ and index $1$ for $\GammaOJ(N)$ exist for all $N$. Indeed, 
we identify an explicit example, $\sHR_N$, for each $N$ in \S~\ref{sec:prp-fcs} (see (\ref{eqn:prp-fcs:msRN}) and Proposition \ref{pro:prp-fcs:msRN_1opt}).
So the relationships to finite groups promise to be richer in the present setting. 

\subsection{Results}\label{sec:int-res}

Say that a virtual graded $\sG$-module $W=\bigoplus_D W_D$ 
as in (\ref{eqn:int-mth:phiWsg}) 
is {optimal} if the associated McKay--Thompson series $\phi^W_\sg$ of (\ref{eqn:int-mth:phiWsg}) are optimal, in the sense of (\ref{eqn:int-mth:trv}--\ref{eqn:int-mth:hWgFS}), for all $\sg\in\sG$.
In a similar spirit to \cite{MR4230542}, we seek an understanding of the full set $\mc{W}_{2,1}^\opt(\sG)$ of optimal $\sG$-modules.  
As we explain in \S~\ref{sec:prp-opt} (see Proposition \ref{pro:prp-opt:mcWG}), this set $\mc{W}_{2,1}^\opt(\sG)$ is naturally a free abelian group of finite rank.
A full solution to the problem of understanding it depends upon 
the minimal positive integer $\opc^\opt_{2,1}(\sG)$ for which 
\begin{gather}\label{eqn:int-res:msubG}
12\opc^\opt_{2,1}(\sG)\sHGH(\tau,z)=-\opc^\opt_{2,1}(\sG)+O(q)
\end{gather}
arises as the graded dimension function $\phi^W_\se$, for some $W\in \mc{W}_{2,1}^\opt(\sG)$ (cf.\ (\ref{eqn:prp-opt:opcopt})), 
and also depends upon a 
lattice structure $\mc{L}^\opt_{2,1}(\sG)$ (cf.\ (\ref{eqn:prp-opt:langleWWprangle}))
on the subgroup of optimal $\sG$-modules that have $\opc=0$ in (\ref{eqn:int-mth:trv}).
With this as motivation we call the computation of $\opc^\opt_{2,1}(\sG)$ and $\mc{L}^\opt_{2,1}(\sG)$, for a given finite group $\sG$, the classification problem for optimal (mock Jacobi) $\sG$-modules (of weight $2$ and index $1$).

In this work we produce a 
solution 
to the
optimal module 
classification problem 
just described, for the ``first'' infinite family of finite simple groups. Specifically, we determine 
$\mc{W}^\opt_{2,1}(\sG)$, for $\sG$ a cyclic group of prime order, by computing $\opc^\opt_{2,1}(\sG)$ and the lattice $\mc{L}^\opt_{2,1}(\sG)$. 
This is the content of our first main result, Theorem \ref{thm:res-opt:tor}. 
Our computation of $\opc^\opt_{2,1}(\sG)$ results in the explicit formula 
\begin{gather}
	\opc^\opt_{2,1}(\sG) = \num\left(\frac{\#\sG+1}{6}\right)
\end{gather}
(see (\ref{eqn:res-opt:opcoptnHN})), where $\num(\a)$ denotes the numerator of a rational number $\a$. 
We compute $\mc{L}^\opt_{2,1}(\sG)$ by expressing it in terms of virtual modules for $\sG$, and the space $S_2(\#\sG)$ 
of 
cuspidal modular forms with weight $2$ and level the order of $\sG$ (see (\ref{eqn:res-opt:mcLoptS2NRG0})). 

Thus, as a consequence of Theorem \ref{thm:res-opt:tor} we see that---even for the simplest family of finite simple groups---structures as rich as spaces of cusp forms play a role in 
classifying its optimal modules.
It is natural to ask if this richness is captured by the structures we introduce in this work. 
Our 
second main result, Theorem \ref{thm:res-art:N=arb},
answers this question affirmatively, 
by 
connecting the classification result of Theorem \ref{thm:res-opt:tor} to the existence, or otherwise, of infinite order rational points on imaginary quadratic twists of modular abelian varieties. 
For example, it is a consequence (see Corollary \ref{cor:res-art:N=11}) of our classification of optimal $\sG$-modules for $\sG=\ZZ/11\ZZ$, that if
$D$ is a negative fundamental discriminant such that $11$ is inert in 
$\QQ(\sqrt{D})$, 
then the $D$-twist 
\begin{gather}\label{eqn:int-res:DtwistJ011}
y^2 = x^3 - 13392D^2x -  1080432D^3	
\end{gather}
(cf.\ (\ref{eqn:int-mtv:EotimesD})), of the modular Jacobian $J_0(11)$ (this is Elliptic Curve 11.a2 in \cite{lmfdb}),
has only finitely many rational number solutions, unless
$h(D)\equiv 0\xmod 5$
(cf.\ (\ref{eqn:int-mtv:cgDhDequiv0mod5})). 
It turns out to be no coincidence (see Theorem \ref{thm:res-art:N=arb}) that $5$ is the number of 
rational points 
of finite order 
on $J_0(11)$.

As we have alluded to in \S~\ref{sec:int-mtv}, the optimal modules for cyclic simple groups that we focus on in this work, in some cases specialize optimal modules for sporadic simple groups that we consider in \cite{radhur,hurm23}. 
For example, it follows from the results of \cite{radhur} that for $\sG=\ZZ/11\ZZ$, every element of $\mc{W}^\opt_{2,1}(\sG)$ 
extends to a module for the unique non-trivial double cover $2.\sM_{12}$ of the Mathieu group $\sM_{12}$. By a similar token, taking $\sG=\ZZ/23\ZZ$ we have that every element of $\mc{W}^\opt_{2,1}(\sG)$ 
extends to a module for the Mathieu group $\sM_{23}$, according to the results of \cite{hurm23}.

As the reader may anticipate, the relative complexity of the groups involved in \cite{radhur,hurm23} entails richer relationships to arithmetic-geometric invariants.
To preview this we mention that
we connect the congruent number problem of antiquity (see e.g.\ the introduction to \cite{MR700775}) to representations of the Mathieu group $\sM_{11}$ in \cite{radhur}, and establish $\sM_{23}$-based interdependencies between the arithmetic-geometric invariants of elliptic curves of different (coprime) levels in \cite{hurm23}.

We conclude by highlighting three problems for future work. 
The first of these is the classification of optimal modules for other finite groups. 
In addition to the sporadic simple groups, which have 
proven particularly successful at producing 
striking results in similar settings, it would be of interest to understand optimal modules for the infinite families of non-abelian finite simple groups. 
A primary motivation for this is the promise of richer relationships to arithmetic geometry, along the lines of those we present in \cite{radhur,hurm23}, but in infinite families rather than in isolated examples.
The classification result of this work may serve as a starting point for this. 
We also point out that we define optimality in some generality in \S~\ref{sec:prp-opt}, in order to prepare for the exploration of other weights and indices.

A second problem is to apply the 
approach of our work, here and in \cite{radhur,hurm23}, to optimality with non-constant $P$ in (\ref{eqn:int-mtv:FgP}).
Thanks to the results of 
\cite{MR4291251,MR4230542}, discussed in \S~\ref{sec:int-mtv}, we know already that interesting examples exist. 
It would be good 
to 
understand where these examples are situated, in the balance between the general and the exceptional that we contrived at the conclusion of \S~\ref{sec:int-mtv},
and it could be profitable to explore 
what more can be said about their 
arithmetic-geometric aspects.

The final problem we emphasize is the construction of richer 
algebraic structure on the optimal modules themselves. 
Do any of the optimal $\sG$-modules we consider admit a $\sG$-invariant algebra, or represent a $\sG$-invariant Lie-type structure of some kind? 
If so, what are the implications of this for the associated arithmetic geometry?

\subsection{Overview}
The structure of this article is as follows. 
We present a guide to the specialized notation that we use in \S~\ref{sec:not}. Then in \S~\ref{sec:prp} we prepare for the statements and proofs of our main results. 
Specifically, we review generalized Hurwitz class numbers, and some related notions, in \S~\ref{sec:prp-cln}, and review (mock) Jacobi forms in \S~\ref{sec:prp-jac}. 
We introduce the notion of optimality we use, in general weight and index, in \S~\ref{sec:prp-opt}, and discuss this notion in more detail in the special case of weight $2$ and index $1$ in \S~\ref{sec:prp-fcs}. 
With the preparation of \S~\ref{sec:prp} in place we formulate and prove our main results in \S~\ref{sec:res}. The formulation and proof of Theorem \ref{thm:res-opt:tor} appears in \S~\ref{sec:res-opt}, and the formulation and proof of Theorem \ref{thm:res-art:N=arb} appears in \S~\ref{sec:res-art}.

\section*{Acknowledgements}

We are grateful to Lea Beneish, Mathew Emerton, Maryam Khaqan, Kimball Martin, Ken Ono, Preston Wake, Eric Zhu and David Zureick-Brown for particularly helpful communication and discussion. We also thank the anonymous referee for their comments which helped to improve an earlier version of this manuscript.

The work of M.C.\ was supported by the National Science and Technology Council of Taiwan (110-2115-M-001-018-MY3), and by a Vidi grant (number 016.Vidi.189.182) from the Dutch Research Council (NWO).
The work of J.D.\ was supported in part by the U.S.\ National Science Foundation (DMS 1601306), the Simons Foundation (\#708354), and the National Science and Technology Council of Taiwan (111-2115-M-001-001-MY2).

\section{Notation}\label{sec:not}

\begin{footnotesize}

\begin{list}{}{
	\itemsep -1pt
	\labelwidth 23ex
	\leftmargin 13ex	
	}

\item
[$(\cdot\,,\cdot)$]
The symmetric bilinear forms on 
$L_{k,m}(\sG)$ and $\mc{L}^\opt_{k,m}(\sG)$. 
See
(\ref{eqn:prp-opt:lambdalambda'})
and 
(\ref{eqn:prp-opt:langleWWprangle}).

\item
[$\langle\cdot\,,\cdot\rangle$]
The Petersson inner product on $S_{k,m}(N)$. 
See (\ref{eqn:prp-jac:Pet}).

\item
[$A\otimes D$]
The $D$-twist of an abelian variety $A$. See (\ref{eqn:res-art:AotimesD}).

\item
[$\mathcal{B}$]
A quaternion algebra over $\QQ$. 
See the proof of Lemma \ref{lem:res-opt:cuspformcong}.

\item
[$\opc$]
The negative of the constant term of a holomorphic mock Jacobi form. See (\ref{eqn:prp-opt:c-optimal}).

\item
[$\opc^\eop(\sG)$]
An invariant we attach to a finite group $\sG$ in \S~\ref{sec:res-opt}.
Cf.\ (\ref{eqn:res-opt:opceopG}).

\item
[$\opc^\eop(N)$]
A certain constant that we define in \S~\ref{sec:res-opt} 
for $N$ prime. See (\ref{eqn:res-opt:opceop}). 

\item
[$\opc^\opt_{k,m}(\sG)$]
An invariant we attach to a finite group $\sG$ in \S~\ref{sec:prp-opt}.
See (\ref{eqn:prp-opt:opcopt}).

\item
[$C_N(D)$]
A shorthand for $C_\f(D)$, for a certain choice of $\f$, in the proof of Theorem \ref{thm:res-art:N=arb}.

\item
[$C_{\vf_N}(D)$]
A coefficient 
in the Fourier expansion of $\varphi_N$. 
Cf.\ (\ref{eqn:res-opt:cuspformcong-msG}).

\item
[$\dC_N$]
The denominator of $\frac{N-1}{12}$ for $N$ prime.
See (\ref{eqn:prp-cln:dHNdCN}).

\item
[$\dH_N$]
The denominator of $\frac{N+1}{6}$ for $N$ prime.
See (\ref{eqn:prp-cln:dHNdCN}).

\item
[$\den(\a)$]
The denominator of a rational number $\a$ when expressed in reduced form. Cf.\ (\ref{eqn:prp-cln:dHNdCN}).

\item
[$\se$]
The identity element of a finite group $\sG$.
See \S~\ref{sec:prp-opt}.

\item
[$e_i$]
A 
point in the geometric fibre of $X_0(N)$ in characteristic $N$. 
See the proof of Lemma \ref{lem:res-opt:cuspformcong}.

\item
[$E_i$]
A supersingular elliptic curve. 
See the proof of Lemma \ref{lem:res-opt:cuspformcong}.

\item
[$f\otimes D$]
The {$D$-th twist} of a cuspidal modular form $f$. 
See (\ref{eqn:res-art:fotimesD}).

\item
[$\mc{F}(N)$]
A fundamental domain for $\Gamma_0(N)$.
Cf.\ (\ref{eqn:prp-jac:Pet}).

\item
[$\phi^W$]
The assignment $\sg\mapsto \phi^W_\sg$ for $W$
a virtual graded $\sG$-module, for $\sG$ a finite group.
Cf.\ (\ref{eqn:prp-opt:phiW}).

\item
[$\phi^W_\sg$]
The McKay--Thompson series 
associated to
the action of 
$\sg$ on 
$W$. 
See (\ref{eqn:prp-opt:phiWsg}) and (\ref{eqn:prp-fcs:phiWsg}).

\item
[$\varphi_N$]
A cuspidal Jacobi form of weight $2$ and index $1$ for $\GammaOJ(N)$. 
See Lemma \ref{lem:res-opt:cuspformcong}. 

\item
[$\sg$]
An element in a finite group $\sG$.
See \S~\ref{sec:prp-opt}.

\item
[$\sG$]
A finite group.
See \S~\ref{sec:prp-opt}.

\item[$\widetilde{\Gamma}_0(N)$]
The metaplectic double cover of $\Gamma_0(N)$. Cf.\ (\ref{eqn:prp-jac:mlttildesl2}).

\item[$\GammaOJ(N)$]
A group of the form $\Gamma_0(N)\ltimes \ZZ^2$. Cf.\ (\ref{eqn:prp-jac:mltGammaOJ}).

\item
[$h^W_\sg$]
The vector-valued function that takes the $h^W_{\sg,r}$ as its components.
See (\ref{eqn:prp-opt:phiWsg}).

\item
[$h^W_{\sg,r}$]
The theta-coefficients of $\phi^W_\sg$.
See (\ref{eqn:prp-opt:hWsgr}).

\item
[$\check h^W_\sg$]
The McKay--Thompson series 
associated to the action of $\sg$ on $\check{W}$. 
See (\ref{eqn:prp-opt:checkhWg}).

\item
[$\HGH(D)$]
The Hurwitz class number of discriminant $D$. 
See (\ref{eqn:prp-cln:msH1}).

\item
[$\HGH_N(D)$]
The generalized Hurwitz class number of level $N$ and discriminant $D$.
See (\ref{eqn:prp-cln:HHurND}--\ref{eqn:prp-cln:HN0}).

\item
[$\HCE_N(D)$]
A coefficient of 
the Cohen--Eisenstein series $\sHCE_N$. 
Cf.\ (\ref{eqn:prp-cln:HCohN}--\ref{eqn:prp-cln:msHCohN}).

\item
[$H^\sZNZ_N(D)$]
A coefficient 
of $\ms{H}^\sZNZ_N$. 
See (\ref{eqn:res-opt:HsNZNND}).

\item
[$\sHGH$]
A shorthand for $\sHGH_1$.
See (\ref{eqn:prp-cln:msH1}).

\item
[$\sHGH_N$]
A holomorphic mock Jacobi form 
defined by the $\HGH_N(D)$.
See (\ref{eqn:prp-cln:msHN}).

\item
[$\sHCE_N$]
A holomorphic Jacobi form 
defined by the $\HCE_N(D)$.
See (\ref{eqn:prp-cln:msHCohN}).

\item
[$\ms{H}^\sZNZ_\sg$]
A certain holomorphic mock Jacobi form defined for $\sg\in \sZNZ$ for $N$ prime. 
See (\ref{eqn:res-opt:msHZNZsg}).

\item
[$\ms{H}^\sZNZ_N$]
A shorthand for $\ms{H}^\sZNZ_{\sg}$ when $N=o(\sg)$.
See the proof of Theorem \ref{thm:res-opt:tor}.

\item
[$\indo(N)$]
The index of $\Gamma_0(N)$ as a subgroup of $\Gamma_0(1)=\SL_2(\ZZ)$. 
Cf.\ (\ref{eqn:prp-cln:HN0}).

\item
[$J_{k,m}(N)$]
The holomorphic Jacobi forms of weight $k$ and index $m$ for $\GammaOJ(N)$. 
Cf.\ (\ref{eqn:prp-jac:varrhomhath_vanishing}).

\item
[$J^\sk_{k,m}(N)$]
The 
skew-holomorphic Jacobi forms of weight $k$ and index $m$ for $\GammaOJ(N)$. 
Cf.\ (\ref{eqn:prp-jac:xi_ses}).

\item
[$\JJ_{k,m}(N)$]
The holomorphic mock Jacobi forms of weight $k$ and index $m$ for $\GammaOJ(N)$. 
Cf.\ (\ref{eqn:prp-jac:varrhomhath_vanishing}).

\item
[$\JJ_{k,m}(N)_\ZZ$]
The $\phi\in \JJ_{k,m}(N)$ with integer coefficients. 
See (\ref{eqn:prp-jac:JJkmGamma0JNZZ}).

\item
[$L_{k,m}(\sG)$]
A certain lattice. See (\ref{eqn:prp-opt:LkmsG}).

\item
[$L_{2,1}(\sG)_0$]
A certain sublattice of $L_{2,1}(\sG)$. See (\ref{eqn:prp-fcs:L21sG0}).

\item
[$\mc{L}^\opt_{k,m}(\sG)$]
The lattice of $0$-optimal virtual graded $\sG$-modules of weight $k$ and index $m$. Cf.\ (\ref{eqn:prp-opt:langleWWprangle}).

\item
[$\lambda_\sg$]
A cuspidal Jacobi form determined by $\lambda\in L_{k,m}(\sG)$ and $\sg\in\sG$. 
See (\ref{eqn:prp-opt:lambdasg}).

\item
[$M_{\frac32}^+(4N)$]
A Kohnen plus space of modular forms. 
Cf.\ (\ref{eqn:prp-jac:checkh}) and 
(\ref{eqn:varthetai}).

\item
[$\nC_N$]
The numerator of $\frac{N-1}{12}$ for $N$ prime.
See (\ref{eqn:res-opt:nHNnCN}).

\item
[$\nH_N$]
The numerator of $\frac{N+1}{6}$ for $N$ prime.
See (\ref{eqn:res-opt:nHNnCN}).

\item
[$\num(\a)$]
The numerator of a rational number $\a$ when expressed in reduced form. Cf.\ (\ref{eqn:res-opt:nHNnCN}).

\item
[$\tilde N$]
An inverse for $N$ modulo $\nC_N$ in the case that $N$ is prime. 
See (\ref{eqn:res-opt:msHZNZsg}).

\item
[$q$]
We set $q=e^{2\pi i \tau}$ for $\tau\in \HH$.

\item[$\mc{Q}_N(D)$]
A set of binary quadratic forms with integer coefficients. 
See \S~\ref{sec:prp-cln}.

\item
[$R_i$]
The endomorphism ring of $E_i$. See 
the proof of Lemma \ref{lem:res-opt:cuspformcong}.

\item
[$R(\sG)$]
The Grothendieck group of finitely generated $\CC\sG$-modules. See (\ref{eqn:prp-opt:RsG}).

\item
[$R(\sG)_0$]
The subgroup of $R(\sG)$ composed of virtual $\sG$-modules $V$ with $\tr(\se|V)=0$. Cf.\ (\ref{eqn:prp-fcs:L21sG0}).

\item
[$\varrho_m$]
A certain unitary representation of $\mpt(\ZZ)$. 
Cf.\ (\ref{eqn:prp-jac:barvarrhomtheta}).

\item
[$S_i$]
A shorthand for $\ZZ+2R_i$. 
See the proof of Lemma \ref{lem:res-opt:cuspformcong}.

\item
[$S_i^0$]
The elements of $S_i$ with vanishing trace.
Cf.\ (\ref{eqn:varthetai}).

\item
[$S_k(N)$]
The cuspidal modular forms of weight $k$ for $\Gamma_0(N)$. 
Cf.\ (\ref{eqn:res-opt:mcLoptS2NRG0}).

\item
[$S_{k,m}(N)$]
The cuspidal Jacobi forms of weight $k$ and index $m$ for $\GammaOJ(N)$. 
See (\ref{eqn:prp-jac:varrhomhath_vanishing}).

\item
[$S_{k,m}(N)_\ZZ$]
The $\phi\in S_{k,m}(N)$ with integer coefficients.
Cf.\ (\ref{eqn:prp-jac:JJkmGamma0JNZZ}).

\item
[$\mpt(\ZZ)$]
The metaplectic double cover of $\SL_2(\ZZ)$. Cf.\ (\ref{eqn:prp-jac:mlttildesl2}).

\item
[$t_d$]
A certain skew-holomorphic mock Jacobi form of theta-type.
Cf.\ (\ref{eqn:prp-fcs:tdk}--\ref{eqn:prp-fcs:t1}).

\item
[${\bf T}_N$]
The level $N$ Hecke algebra.
See the proof of Theorem \ref{thm:res-art:N=arb}.

\item
[$\tau_1$]
The real part of a complex number $\tau\in\HH$.
Cf.\ (\ref{eqn:prp-jac:Pet}).

\item
[$\tau_2$]
The imaginary part of a complex number $\tau\in\HH$.
Cf.\ (\ref{eqn:prp-jac:Pet}).

\item
[$\theta_m$]
The vector-valued function that takes the $\theta_{m,r}$ 
as its components.
Cf.\ (\ref{eqn:prp-jac:thetadecomp_vec}).

\item
[$\theta_{m,r}$]
The theta series defined by the positive-definite even lattices of rank $1$.
See (\ref{eqn:prp-jac:thetamr}).

\item
[$\vartheta$]
A $\ZZ$-linear map on $\mc{X}$. 
See the proof of Lemma \ref{lem:res-opt:cuspformcong}.

\item
[$\vartheta_i$]
The theta series associated to $S_i^0$. 
See (\ref{eqn:varthetai}).

\item
[$w_i$]
Half the number of invertible elements of $R_i$.
See the proof of Lemma \ref{lem:res-opt:cuspformcong}.

\item
[$\mc{W}^\opt_{k,m}{(\sG)}$]
The set of optimal virtual graded $\sG$-modules of weight $k$ and index $m$. 
See (\ref{eqn:prp-opt:mcWoptkmsG}).

\item
[$\mc{W}^\opt_{k,m}{(\sG)}_\opc$]
The set of $\opc$-optimal virtual graded $\sG$-modules of weight $k$ and index $m$. 
See (\ref{eqn:prp-opt:mcWoptkmsG}).

\item
[$x_{\rm E}$]
A certain element of $\mc{X}$. See (\ref{eqn:res-opt:xE}).

\item
[$\mathcal{X}$]
A free $\ZZ$-module 
associated to $X_0(N)$, for $N$ prime.
See the proof of Lemma \ref{lem:res-opt:cuspformcong}.

\item
[$\xi$]
The shadow map $\JJ_{k,m}(N)\to J^\sk_{3-k,m}(N)$. 
See (\ref{eqn:prp-jac:xi_ses}).

\item
[$y$]
We set $y=e^{2\pi i z}$ for $z\in\CC$.

\end{list}

\end{footnotesize}

\section{Preparation}\label{sec:prp}

We prepare for the main arguments of this paper in this section. 
We discuss some variations of class numbers of imaginary quadratic fields, and the relations between them, in \S~\ref{sec:prp-cln}.
Then we formulate our conventions for Jacobi forms in \S~\ref{sec:prp-jac}. 
The notion of optimality is a cornerstone of this work, and we discuss it in detail in \S~\ref{sec:prp-opt}. 
With future applications in mind we work with Jacobi forms of general weight and index in \S\S~\ref{sec:prp-jac}--\ref{sec:prp-opt}. 
In \S~\ref{sec:prp-fcs} we specialize to the situation of main interest in this work, wherein the weight is $2$ and the index is $1$, and in this setting we tie together the topics of the preceding sections, \S\S~\ref{sec:prp-cln}--\ref{sec:prp-opt}.

\subsection{Class Numbers}\label{sec:prp-cln}

For integers $N$ and $D$ 
let $\mc{Q}_N(D)$ denote the set of integer coefficient binary quadratic forms $Q(x,y)=Ax^2+Bxy+Cy^2$ of discriminant $D:=B^2-4AC$ with 
$A\equiv 0 \xmod N$.
Then the group 
\begin{gather}\label{eqn:prp-cln:Gamma0N}
\Gamma_0(N) :=
\left.\left\{ 
\left(\begin{smallmatrix} a&b\\c&d\end{smallmatrix}\right)\in\SL_2(\ZZ)\;\right|\; c\equiv0\xmod N
\right\}
\end{gather}
acts naturally on $\mc{Q}_N(D)$ via the rule
\begin{gather}\label{eqn:prp-cln:Qslashgamma}
\left(Q\left|\left(\begin{smallmatrix}a&b\\c&d\end{smallmatrix}\right)\right.\right)(x,y):=Q(ax+by,cx+dy),
\end{gather} 
so it is natural to consider the 
cardinalities of the orbit spaces $\mc{Q}_N(D)/\Gamma_0(N)$.
At least in the positive-definite case, whereby $D<0$, it turns out to be even more natural to consider the weighted cardinalities, with weights determined by the stabilizers 
\begin{gather}\label{eqn:prp-cln:GammaONQ}
\Gamma_0(N)_Q:=\left\{\gamma\in \Gamma_0(N)\mid Q|\gamma=Q\right\}.
\end{gather}
For $D<0$ such that $\mc{Q}_N(D)$ is not empty
the corresponding weighted cardinality
\begin{gather}\label{eqn:prp-cln:HHurND}
\HGH_N(D):=\sum_{Q\in\mc{Q}_N(D)/\Gamma_0(N)}\frac1{\#\Gamma_0(N)_Q}
\end{gather}
is called 
a {\em generalized Hurwitz class number} of $D$.

Our main motivation for the definition (\ref{eqn:prp-cln:HHurND}) is that it manifests functions with (mock) modular properties (cf.\ \S~\ref{sec:prp-jac}).
To state this more concretely we set $\HGH_N(D):=0$ in case $D<0$ {\color{black} is such that} $\mc{Q}_N(D)$ is empty, {\color{black} we} define 
\begin{gather}\label{eqn:prp-cln:HN0}
\HGH_N(0):=-\frac{1}{12}
\indo(N)
\end{gather}
where $\indo(N):=[\Gamma_0(1):\Gamma_0(N)]$ denotes the index of $\Gamma_0(N)$ in the modular group $\Gamma_0(1)=\SL_2(\ZZ)$, 
and set $\HGH_N(D):=0$ when $D>0$. 
With these conventions the
generating functions
\begin{gather}\label{eqn:prp-cln:msHN}
\sHGH_N(\tau,z):=\sum_{n,s\in\ZZ}\HGH_N(s^2-4n)q^ny^s
\end{gather}
define mock Jacobi forms (see Proposition \ref{pro:prp-fcs:msHNmock}) once we substitute $q=e^{2\pi i\tau}$ and $y=e^{2\pi i z}$, for $\tau\in \HH$ and $z\in \CC$.

In what follows we usually suppress the subscript $N$ from notation in (\ref{eqn:prp-cln:HHurND}--\ref{eqn:prp-cln:msHN}) when $N=1$, so that
\begin{gather}\label{eqn:prp-cln:msH1}
\HGH(D):=\HGH_1(D),\quad
\sHGH:=\sHGH_1.
\end{gather}
The $\HGH(D)$ are called simply {\em Hurwitz class numbers}. 

Say that $D\in\ZZ$ is a {\em discriminant} if $\mc{Q}(D)$ is not empty (i.e. $D$ is congruent to $0$ or $1$ modulo $4$), and say that  $D$ is a {\em fundamental discriminant} if it is the discriminant of the number field $\QQ(\sqrt{D})$. Then for $D$ a negative fundamental discriminant we have $\HGH(D)=\frac{1}{w_D}h(D)$, where $h(D)$ is the usual class number of the imaginary quadratic field $\QQ(\sqrt{D})$, and $w_D$ is half the number of units in the ring of integers of $\QQ(\sqrt{D})$. So $\HGH(-3)=\frac13$, $\HGH(-4)=\frac12$, and $\HGH(D)=h(D)$ for fundamental $D<-4$.

We will also make use of a variation on the class numbers $h(D)$ due to Cohen (cf.\ \cite{MR0382192}). To define this note that for $D$ an arbitrary discriminant we have $D=f^2D_0$, where $D_0$ is the discriminant of $\QQ(\sqrt{D})$ and $f$ is the {\em conductor} of the order 
\begin{gather}
\mathcal{O}_D:=\ZZ\left[\frac{D+\sqrt{D}}2\right].
\end{gather} 
Next, for $N$ prime and $D=f^2D_0$ as above
let $f'$ be the largest factor of $f$ that is coprime to $N$, and set $D':=(f')^2D_0$. 
Then, following Gross (see \S~1 of \cite{MR894322}), we define $\HCE_N(D)$ for $N$ prime and $D$ a negative discriminant
by setting 
\begin{gather}
\HCE_N(D):=
\begin{cases}\label{eqn:prp-cln:HCohN}
	0&\text{ if $N$ splits in $\mathcal{O}_{D'}$,}\\
	\frac12\HGH(D')&\text{ if $N$ is ramified in $\mathcal{O}_{D'}$, }\\ 
	\HGH(D')&\text{ if $N$ is inert in $\mathcal{O}_{D'}$.}
\end{cases}
\end{gather}
We also set $\HCE_N(0):=\frac{N-1}{24}$, and set $\HCE_N(D):=0$ when $D$ is positive, or a negative integer that is not a discriminant. 
We then define the {\em Cohen--Eisenstein series} of (prime) level $N$ by setting
\begin{gather}\label{eqn:prp-cln:msHCohN}
	\sHCE_N(\tau,z):=\sum_{n,s\in\ZZ} \HCE_N(s^2-4n)q^{n}y^s,
\end{gather}
where $q=e^{2\pi i \tau}$ and $y=e^{2\pi i z}$, as in (\ref{eqn:prp-cln:msHN}).

It develops that the Cohen--Eisenstein series (\ref{eqn:prp-cln:msHCohN}) are linearly related to the generalized Hurwitz class number generating functions (\ref{eqn:prp-cln:msHN}), 
and are 
in fact modular (see Proposition \ref{pro:prp-fcs:msHNmock}). 
\begin{lem}\label{lem:cln-ces:ceshcn}
For $N$ prime we have $\sHGH=\sHCE_N+\frac12\sHGH_N$. 
\end{lem}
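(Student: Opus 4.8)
The plan is to reduce the asserted identity of mock Jacobi forms to an identity of Fourier coefficients, and then to prove the latter by a weighted orbit count for binary quadratic forms. Since each of $\sHGH$, $\sHCE_N$ and $\sHGH_N$ has the shape $\sum_{n,s\in\ZZ} c(s^2-4n)\,q^n y^s$, with coefficient depending only on the discriminant $D=s^2-4n$, the claimed identity $\sHGH=\sHCE_N+\tfrac12\sHGH_N$ is equivalent to
\[
\HGH(D)=\HCE_N(D)+\tfrac12\HGH_N(D)
\]
for every $D\in\ZZ$. For $D>0$, and for $D<0$ not a discriminant, all three terms vanish, so there is nothing to prove. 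For $D=0$ one checks the identity directly from the definitions, using $\indo(N)=N+1$ for $N$ prime: indeed $\HGH(0)=-\tfrac1{12}$, $\HGH_N(0)=-\tfrac{N+1}{12}$ and $\HCE_N(0)=\tfrac{N-1}{24}$, and $\tfrac{N-1}{24}-\tfrac{N+1}{24}=-\tfrac1{12}$. Thus the content of the lemma lies in the range where $D<0$ is a discriminant.

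First I would record the key reciprocity expressing $\HGH_N(D)$ through $\SLZ$-orbits. Realize $\Gamma_0(N)$ as the stabilizer in $\SLZ$ of the point $P_0=[1:0]\in\PP^1(\FF_N)$, and, writing $Z_N(Q)\subseteq\PP^1(\FF_N)$ for the zero locus of $Q\bmod N$, note that the condition $N\mid A$ defining $\mc{Q}_N(D)$ is precisely the condition $P_0\in Z_N(Q)$. Consider the incidence set $I=\{(Q,P): Q\in\mc{Q}(D),\,P\in Z_N(Q)\}$ with the diagonal $\SLZ$-action. Because $\SLZ$ acts transitively on $\PP^1(\FF_N)$, every $\SLZ$-orbit on $I$ meets the fibre over $P_0$, and the stabilizer of $(Q,P_0)$ equals $\SLZ_Q\cap\Gamma_0(N)=\Gamma_0(N)_Q$; this identifies $\SLZ$-orbits on $I$ with $\Gamma_0(N)$-orbits on $\mc{Q}_N(D)$ compatibly with stabilizer orders. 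Computing the weighted count of $\SLZ$-orbits on $I$ by instead projecting to $\mc{Q}(D)$ and applying orbit--stabilizer on the finite fibres $Z_N(Q)$ then yields
\[
\HGH_N(D)=\sum_{[Q]\in\mc{Q}(D)/\SLZ}\frac{|Z_N(Q)|}{|\SLZ_Q|}.
\]

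It then remains to evaluate $|Z_N(Q)|$. When $N\nmid D$ the reduction $Q\bmod N$ is a nondegenerate binary quadratic form, so $|Z_N(Q)|=1+\legendre{D}{N}$ is constant on $\mc{Q}(D)$, giving $\HGH_N(D)=\bigl(1+\legendre{D}{N}\bigr)\HGH(D)$; together with $\HCE_N(D)=0$ in the split case and $\HCE_N(D)=\HGH(D)$ in the inert case this produces the identity at once. When $N\mid D$ the count depends on the $N$-content of $Q$: an $N$-primitive form reduces to a nonzero form of discriminant $0$ and so has $|Z_N(Q)|=1$, whereas a form with $N\mid\gcd(A,B,C)$ is $N$ times a form of discriminant $D/N^2$ and has $|Z_N(Q)|=N+1$. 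Splitting the orbit sum according to the $N$-content, and using that the forms with $N\mid\gcd(A,B,C)$ biject $\SLZ$-equivariantly with $\mc{Q}(D/N^2)$, I obtain
\[
\HGH_N(D)=\HGH(D)+N\,\HGH(D/N^2)
\]
with the convention $\HGH(D/N^2)=0$ when $N^2\nmid D$.

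Finally I would match this against $\HCE_N$. For $N\mid D$ the identity to be proved becomes
\[
\HCE_N(D)=\tfrac12\bigl(\HGH(D)-N\,\HGH(D/N^2)\bigr),
\]
and this I would establish by induction on the power of $N$ dividing the conductor $f$ of $\mathcal{O}_D$, using the classical conductor recursion $\HGH(N^2\Delta)=\bigl(N+1-\legendre{\Delta}{N}\bigr)\HGH(\Delta)-N\,\HGH(\Delta/N^2)$. Descending by factors of $N^2$ until the $N$-part of the conductor is removed collapses the right-hand side to $\tfrac12\HGH(D')$, $\HGH(D')$, or $0$ according as $N$ is ramified, inert, or split in $\mathcal{O}_{D'}$ (the symbol $\legendre{\Delta}{N}$ producing exactly the factors $1$, $2$, $0$ at the last step), which is Gross's definition of $\HCE_N(D)$. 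I expect this final step---controlling the interaction of the zero-count recursion with the conductor recursion in the ramified case $N\mid D_0$, and separately treating the degenerate reductions at the prime $N=2$---to be the main technical obstacle; the conductor recursion itself can either be cited or reproved by the same weighted orbit count applied to $\mc{Q}(N^2\Delta)$.
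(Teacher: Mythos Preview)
Your argument is correct and takes a genuinely different route from the paper. The paper passes through the primitive decomposition $\HGH_N(D)=\sum_{\ell^2\mid D}\HGHO_N(D/\ell^2)$, cites the Gross--Kohnen--Zagier count of primitive $\Gamma_0(N)$-classes in terms of the solution-counting function $n_D(N)$, and cites Zagier for the stabilizer orders; summing over $\ell$ then produces the closed formula $\HGH_N(D)=2\HGH(D)-\bigl(1-\bigl(\tfrac{D'}{N}\bigr)\bigr)\HGH(D')$ in one stroke, from which the lemma is immediate. You instead bypass the primitive decomposition entirely: your incidence set $I=\{(Q,P):P\in Z_N(Q)\}$ with the diagonal $\SLZ$-action gives $\HGH_N(D)=\sum_{[Q]}|Z_N(Q)|/|\SLZ_Q|$ by pure double counting, and the elementary evaluation of $|Z_N(Q)|$ yields the one-step recursion $\HGH_N(D)=\HGH(D)+N\,\HGH(D/N^2)$ in the case $N\mid D$. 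The cost is that you then need the Hurwitz recursion and an induction on $v_N(D)$ to descend to $D'$ and match $\HCE_N$, whereas the paper's route lands on $D'$ directly. What your approach buys is self-containment: the orbit count needs no external citation, and the stabilizer matching $\Gamma_0(N)_Q=\SLZ_Q\cap\Gamma_0(N)$ is automatic from the incidence setup. The induction you outline does go through for odd $N$ exactly as you sketch (the key being $\HCE_N(D)=\HCE_N(D/N^2)$ whenever $N^2\mid D$, which follows at once from $(D/N^2)'=D'$), and the recursion $\HGH(N^2\Delta)=(N+1-(\tfrac{\Delta}{N}))\HGH(\Delta)-N\,\HGH(\Delta/N^2)$ can indeed be recovered by your own zero-counting method applied to $\mc{Q}(N^2\Delta)$, so the only genuinely separate case is $N=2$, as you anticipated.
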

\begin{proof}
It follows directly from the definitions that the constant terms 
on either side of the desired identity coincide, 
so we must show that $\HGH(D)=\HCE_N(D)+\frac12 \HGH_N(D)$ for $D$ a negative discriminant.  

To begin we observe that the definition (\ref{eqn:prp-cln:HCohN}) of $\HCE_N(D)$ 
may be written more succinctly as 
\begin{gather}\label{eqn:HsubNDHDprime}
\HCE_N(D)=\frac12\left(1-\left(\frac{D'}{N}\right)\right)\HGH(D'),
\end{gather} 
where $\left(\frac{\cdot}{\cdot}\right)$ is the Kronecker symbol (see e.g.\ p.\ 503 of \cite{MR909238} for the definition) 
and $D'$ is as in (\ref{eqn:prp-cln:HCohN}). We will derive a similar expression for $\HGH_N(D)$, by using the fact that 
\begin{gather}
	\HGH_N(D)=\sum_{\ell\in L(D)}\HGHO_N(\tfrac{D}{\ell^2}),
\end{gather}
for any positive integer $N$,
where $L(D):=\{\ell>0\mid \ell^2|D\}$ and $\HGHO_N(D)$ is defined just as $\HGH_N(D)$ is, but replacing $\mathcal{Q}_N(D)$ with the subset of {\em primitive} quadratic forms $\mathcal{Q}_N^0(D):=\left\{Q\in \mathcal{Q}_N(D)\mid (A,B,C)=1\right\}$ in (\ref{eqn:prp-cln:HHurND}). Note that, according to p.~507 of \cite{MR909238}, we have 
\begin{gather}\label{eqn:cardQN0DmodGamma0N}
	\#\mathcal{Q}_N^0(D)/\Gamma_0(N)=\left(n_D(N)+n_{\frac{D}{N^2}}(1)\right)\#\mathcal{Q}_1^0(D)/\Gamma_0(1)
\end{gather}
for $N$ prime, where $n_a(m):=\#\left\{x\in \ZZ/2m\ZZ\mid x^2\equiv a \xmod 4m\right\}$ for $a,m\in \ZZ$ (and $n_a(m):=0$ if $a$ is not an integer). 
Also, by applying Satz 2 in \S~8 of \cite{MR631688} we see that 
for $Q\in \mathcal{Q}^0_N(D)$
the quantity $\#\Gamma_0(N)_Q$ depends only on $D$,  
and furthermore 
\begin{gather}\label{eqn:prp-cln:hashGammaONQ}
	\#\Gamma_0(N)_Q=
	\begin{cases}
		6&\text{ if $D=-3$,}\\
		4&\text{ if $D=-4$,}\\
		2&\text{ if $D<-4$.}
	\end{cases}
\end{gather}
We now put (\ref{eqn:cardQN0DmodGamma0N}) and (\ref{eqn:prp-cln:hashGammaONQ}) together, and obtain that
\begin{gather}\label{eqn:prp-cln:HHurNDsumH0Dnblahnblah}
	\HGH_N(D)=\sum_{\ell\in L(D)}\HGHO_1(\tfrac{D}{\ell^2})\left(n_{\frac{D}{\ell^2}}(N)+n_{\frac{D}{(N\ell)^2}}(1)\right).
\end{gather}

Observe that $n_D(N)=1+\left(\frac{D}{N}\right)$ for $N$ prime. {\color{black}Writing} $D=N^{2a}D'$ where $D'$ is as in (\ref{eqn:prp-cln:HCohN}), {\color{black}we} define $L'(D):=\left\{\ell\in L(D)\mid N^a|\ell\right\}${\color{black}. Then the map 
$L(D')\to L'(D),\ \ell'\mapsto N^a\ell'$
is a bijection and we obtain} $(\frac{D/\ell^{2}}{N})\HGHO_1(\frac{D}{\ell^2})=(\frac{D'}{N})\HGHO_1(\frac{D'}{(\ell')^2})$ for $\ell=N^a\ell'\in L'(D)$. 
The Kronecker symbol 
$(\frac{D/\ell^2}{N})$ vanishes for $\ell\in L(D)\setminus L'(D)$, so we have
\begin{gather}\label{eqn:sumH0Dell2nblah}
\sum_{\ell\in L(D)}\HGHO_1(\tfrac{D}{\ell^2})n_{\frac{D}{\ell^2}}(N)=\HGH(D)+\left(\frac{D'}{N}\right)\HGH(D').
\end{gather}
Next observe that $\ell\mapsto n_{\frac{D}{(N\ell)^2}}(1)$ 
is 
$0$ or $1$ according as $\ell\in L(D)$ belongs to $L'(D)$ or not,
so using $\HGHO_1(\frac{D}{\ell^2})=\HGHO_1(\frac{D'}{(\ell')^2})$ for $\ell=N^a\ell'\in L'(D)$ we obtain
\begin{gather}\label{eqn:HDLprimeDLDnblah}
	\sum_{\ell\in L(D)}\HGHO_1(\tfrac{D}{\ell^2})n_{\frac{D}{(N\ell)^2}}(1)=\HGH(D)-\HGH(D').
\end{gather}
We substitute (\ref{eqn:sumH0Dell2nblah}) and (\ref{eqn:HDLprimeDLDnblah}) into (\ref{eqn:prp-cln:HHurNDsumH0Dnblahnblah}) now, and arrive at the identity
\begin{gather}\label{eqn:HsupNDHDHDprime}
\HGH_N(D)=2\HGH(D)-\left(1-\left(\frac{D'}{N}\right)\right)\HGH(D').
\end{gather}
This, taken together with (\ref{eqn:HsubNDHDprime}), 
verifies that $\HCE_N(D)+\frac12\HGH_N(D)=\HGH(D)$ for $D<0$. 
\end{proof}
For application later, in \S~\ref{sec:res-opt}, we record here the following special cases of (\ref{eqn:HsubNDHDprime}) and (\ref{eqn:HsupNDHDHDprime}).
\begin{lem}\label{lem:HCohNDHNDHD}
For $N$ prime and $D<0$ fundamental we have
\begin{gather}\label{eqn:HCohNDHNDHD}
	\HGH_N(D)=\left(1+\left(\frac DN\right)\right)\HGH(D)
	,\quad
	\HCE_N(D)=\frac12\left(1-\left(\frac DN\right)\right)\HGH(D)
	.
\end{gather}
\end{lem}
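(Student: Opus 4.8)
The plan is to derive both identities by specializing the two expressions (\ref{eqn:HsubNDHDprime}) and (\ref{eqn:HsupNDHDHDprime}), which were established in the course of proving Lemma \ref{lem:cln-ces:ceshcn}, to the case that $D$ is a fundamental discriminant. The crucial observation is that a fundamental discriminant has trivial conductor. Indeed, by the definition recalled in \S~\ref{sec:prp-cln}, a fundamental discriminant $D$ coincides with the discriminant $D_0$ of $\QQ(\sqrt{D})$, so in the factorization $D=f^2D_0$ we necessarily have $f=1$. The largest factor $f'$ of $f$ that is coprime to $N$ is then also equal to $1$, so that $D'=(f')^2D_0=D_0=D$.

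With the equality $D'=D$ in hand, the second identity is immediate. First I would substitute $D'=D$ into (\ref{eqn:HsubNDHDprime}), which yields
\[
\HCE_N(D)=\tfrac12\left(1-\left(\tfrac DN\right)\right)\HGH(D)
\]
directly. For the first identity I would substitute $D'=D$ into (\ref{eqn:HsupNDHDHDprime}) and then simplify the resulting arithmetic, using
\[
2\HGH(D)-\left(1-\left(\tfrac DN\right)\right)\HGH(D)=\left(1+\left(\tfrac DN\right)\right)\HGH(D),
\]
which gives the claimed formula for $\HGH_N(D)$.

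There is essentially no obstacle here, since the entire content of the statement is already contained in the general identities proved for Lemma \ref{lem:cln-ces:ceshcn}; the present lemma is simply the clean specialization obtained by setting the conductor equal to $1$. The only point that warrants a moment's care is confirming the convention that "fundamental discriminant" forces $f=1$, and hence $D'=D$; as noted above this follows at once from the definition in \S~\ref{sec:prp-cln}.
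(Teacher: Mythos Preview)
Your proposal is correct and matches the paper's approach exactly: the paper states this lemma as recording ``the following special cases of (\ref{eqn:HsubNDHDprime}) and (\ref{eqn:HsupNDHDHDprime})'' without further proof, and your argument simply spells out that specialization by noting that $D$ fundamental forces $D'=D$.
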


Observe that the Hurwitz class number $\HGH(D)=\HGH_1(D)$ (cf.\ (\ref{eqn:prp-cln:HHurND})) is an integer unless $D=-3e^2$ or $D=-4e^2$ for some $e\in\ZZ$. 
In the former case the $\SL_2(\ZZ)$-orbit containing $ex^2+exy+ey^2\in \mc{Q}_D$ contributes $\frac13$ to $\HGH(D)$, and in the latter case the orbit of $ex^2+ey^2$ contributes $\frac12$, and all other orbits make integer contributions. So in particular $6\HGH(D)\in \ZZ$ for every negative discriminant $D$. 
Our last objective in this section is 
the determination of analogous statements for $\HGH_N(D)$ and $\HCE_N(D)$ for $N$ prime.
For this define
\begin{gather}\label{eqn:prp-cln:dHNdCN}
	\dH_N:=\den\left(\frac{N+1}{6}\right),\quad
	\dC_N:=\den\left(\frac{N-1}{12}\right),
\end{gather}
where $\den(\alpha)$ denotes the denominator of a rational number $\a$  when expressed in reduced form.

\begin{lem}\label{lem:cln-ces:cNHNDdNHCND}
For $N$ prime and $D<0$ we have $\dH_N\HGH_N(D)\in \ZZ$ and $\dC_N\HCE_N(D)\in \ZZ$.
\end{lem}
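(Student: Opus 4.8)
The plan is to reduce everything to the two identities established inside the proof of Lemma~\ref{lem:cln-ces:ceshcn}, namely
\begin{gather*}
\HGH_N(D)=2\HGH(D)-\left(1-\legendre{D'}{N}\right)\HGH(D'),\\
\HCE_N(D)=\tfrac12\left(1-\legendre{D'}{N}\right)\HGH(D'),
\end{gather*}
(cf.\ (\ref{eqn:HsupNDHDHDprime}) and (\ref{eqn:HsubNDHDprime})), where $D=N^{2a}D'$ and $D'$ is the negative discriminant obtained by deleting the $N$-part of the conductor of $D$. First I would dispose of the trivial case: if $D$ is not a discriminant then $\HGH_N(D)=\HCE_N(D)=0$, so I may assume $D<0$ is a discriminant, in which case $D'<0$ is a discriminant too and $\legendre{D'}{N}=\legendre{D_0}{N}$ records whether $N$ is split ($+1$), inert ($-1$), or ramified ($0$) in $\QQ(\sqrt{D})$. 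Since $6\HGH(D)\in\ZZ$ and $6\HGH(D')\in\ZZ$, the identities give at once $6\HGH_N(D)\in\ZZ$ and $12\HCE_N(D)\in\ZZ$; as $\dH_N\mid 6$ and $\dC_N\mid 12$, it remains only to bound the $2$-adic and $3$-adic valuations of $\HGH_N(D)$ and $\HCE_N(D)$. The two refined inputs I would use are that the denominator of $\HGH(D)$ is divisible by $3$ exactly when $D_0=-3$, and by $2$ exactly when $D_0=-4$; and, more precisely, that when $D_0=-3$ the quantity $\HGH(D)$ always has $3$-adic fractional part equal to $\tfrac13$ (the single orbit of $ex^2+exy+ey^2$), independently of the conductor.

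For $\HGH_N(D)$ I would compute from (\ref{eqn:prp-cln:dHNdCN}) that $v_3(\dH_N)=1$ unless $N\equiv2\pmod 3$ (when it is $0$), and that $v_2(\dH_N)=1$ for $N=2$ and $0$ otherwise, both read off from $N\bmod 6$. If $D_0\neq-3$ then $\HGH(D)$ and $\HGH(D')$ are $3$-integral, hence so is $\HGH_N(D)$; if $D_0=-3$ and $N\equiv 2\pmod3$ (the only case with $v_3(\dH_N)=0$), then $N$ is inert, $\HGH_N(D)=2(\HGH(D)-\HGH(D'))$, and the equality of the $\tfrac13$ fractional parts of $\HGH(D)$ and $\HGH(D')$ makes the difference $3$-integral. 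The $2$-adic bound is cleaner: whenever $D_0=-4$ the factor $2$ multiplying $\HGH(D)$ (and the factor $2$ coming from $1-\legendre{D'}{N}$ in the inert case) already clears the $\tfrac12$, so $\HGH_N(D)$ is $2$-integral for odd $N$, while for $N=2$ the bound $v_2(\dH_2)=1$ leaves exactly the room that $6\HGH_N(D)\in\ZZ$ provides. This settles $\dH_N\HGH_N(D)\in\ZZ$.

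For $\HCE_N(D)=\tfrac12(1-\legendre{D'}{N})\HGH(D')$ I would likewise split by the behaviour of $N$; the split case is immediate since then $\HCE_N(D)=0$. For the $3$-adic bound, $v_3(\dC_N)=1$ unless $N\equiv1\pmod3$, and the only way to reach $v_3(\HCE_N(D))=-1$ is $D_0=-3$ with $N$ inert or ramified, i.e.\ $N\not\equiv1\pmod3$, where indeed $v_3(\dC_N)=1$. For the $2$-adic bound, $v_2(\dC_N)=2,1,0$ according as $N=2$, $N\equiv3\pmod4$, $N\equiv1\pmod4$; the inert contribution is $v_2(\HGH(D'))$, which is $-1$ only when $D_0=-4$, forcing $N\equiv3\pmod4$ where $v_2(\dC_N)=1$ suffices, and the remaining ramified cases ($N=2$, or $N$ odd with $N\equiv3\pmod4$) are covered by the same room since then $v_2(\dC_N)\geq1$.

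The one case that resists these elementary valuation counts, and which I expect to be the main obstacle, is $\HCE_N(D)=\tfrac12\HGH(D')$ when $N$ is an odd ramified prime with $N\equiv1\pmod4$ (so $v_2(\dC_N)=0$): here I must show outright that $\HGH(D')$ is \emph{even}. This is where genus theory enters. Since $N\mid D_0$ and $N\equiv1\pmod4$, the fundamental discriminant $D_0$ factors as a product $N\cdot D_1$ of at least two prime discriminants (as $D_1<0$ is fundamental and coprime to $N$), so $h(D_0)$ is even; and since $D'=(f')^2D_0$ with $(f',N)=1$, every primitive contribution $\HGHO_1(D'/\ell^2)$ in the decomposition $\HGH(D')=\sum_\ell \HGHO_1(D'/\ell^2)$ has discriminant divisible by the same two or more distinct primes and is therefore a ring class number of even order. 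Hence $\HGH(D')$ is even and $\HCE_N(D)$ is $2$-integral, completing the proof. Finally I would double-check the boundary values $N=2,3$ by hand against (\ref{eqn:prp-cln:dHNdCN}), since these are the cases where $\dH_N$ and $\dC_N$ are largest and the crude bounds $6\HGH_N(D),\,12\HCE_N(D)\in\ZZ$ already suffice.
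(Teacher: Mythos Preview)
Your argument is correct. For $\dH_N\HGH_N(D)\in\ZZ$ it is essentially the paper's proof reorganized: the paper splits into the cases $N=2$, $N$ odd with $N\not\equiv 5\pmod 6$, and $N\equiv 5\pmod 6$, while you split by the primes $2$ and $3$ separately; both reduce to the identity (\ref{eqn:HsupNDHDHDprime}) and the observation that the $\tfrac13$ fractional parts of $\HGH(D)$ and $\HGH(D')$ cancel when $D_0=-3$ and $N$ is inert. For $\dC_N\HCE_N(D)\in\ZZ$ the approaches genuinely diverge: the paper simply invokes \S~1 of Gross \cite{MR894322}, whereas you supply a self-contained argument whose only nontrivial case is $N\equiv 1\pmod 4$ ramified, handled by genus theory (since $N$ is then a positive prime discriminant dividing $D_0<0$, the fundamental discriminant has at least two prime-discriminant factors, so $h(D_0)$ is even, and the class-number formula for orders propagates this to every term of $\HGH(D')$). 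Your route has the advantage of being fully elementary and not relying on an external reference; the paper's route is shorter.

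One small slip: your closing remark that for $N=2,3$ ``the crude bounds $6\HGH_N(D),\,12\HCE_N(D)\in\ZZ$ already suffice'' is false for $\HGH_N$, since $\dH_2=2$ and $\dH_3=3$ are strictly smaller than $6$. This does no damage, because your general $2$-adic and $3$-adic arguments already cover $N=2$ and $N=3$ correctly; just delete or amend that sentence.
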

\begin{proof}
The statement that $\dC_N\HCE_N(D)$ is an integer for all $D<0$ can be found in \S~1 of \cite{MR894322}. 
For the integrality of $\dH_N\HGH_N(D)$ we may argue as follows.
For $N=2$ we have $\dH_N=2$, so
\begin{gather}\label{eqn:cls-ces:c2H2D}
	\dH_2\HGH_2(D)=4\HGH(D)-2\left(1-\left(\frac{D'}{2}\right)\right)\HGH(D')
\end{gather}
according to (\ref{eqn:HsupNDHDHDprime}), where $D'$ is as in (\ref{eqn:prp-cln:HCohN}). 
From the remarks preceding the statement of the lemma we have that $\HGH(D)$ and $\HGH(D')$ belong to $\frac12\ZZ$ unless $D=-3e^2$ for some integer $e$. So
the integrality of $\dH_2\HGH_2(D)$ follows from (\ref{eqn:cls-ces:c2H2D}), because if $D=-3e^2$ for some $e\in\ZZ$, then $D'=-3(e')^2$ for some $e'\in\ZZ$, and the right-hand side of (\ref{eqn:cls-ces:c2H2D}) becomes $4(a+\frac13)-4(b+\frac13)=4(a-b)$ for some integers $a$ and $b$. So the claim holds for $N=2$. 

Next consider the case that $N$ is an odd prime that is not $5\xmod 6$. Then $\dH_N=3$ and we have
\begin{gather}\label{eqn:cls-ces:cNHND-Noddnot5mod6}
	\dH_N\HGH_N(D)=6\HGH(D)-3\left(1-\left(\frac{D'}{N}\right)\right)\HGH(D')
\end{gather}
by (\ref{eqn:HsupNDHDHDprime}). 
So we have to check that $1-\left(\frac{D'}N\right)$ is even when $D'=-4(e')^2$ for some $e'$. 
This holds because $\left(\frac{-4}N\right)$ is not $0$ for odd $N$.

Finally suppose that $N\equiv 5\xmod 6$. Then $\dH_N=1$ and we have
\begin{gather}\label{eqn:cls-ces:cNHND-N5mod6}
	\dH_N\HGH_N(D)=2\HGH(D)-\left(1-\left(\frac{D'}{N}\right)\right)\HGH(D').
\end{gather}
We also have $\left(\frac{-3}{N}\right)=-1$, so if $D=-3e^2$ for some $e$ then the right-hand side of (\ref{eqn:cls-ces:cNHND-N5mod6}) becomes $2(a+\frac13)-2(b+\frac13)=2(a-b)$ for some integers $a$ and $b$. If $D=-4e^2$ for some $e$ then $\left(\frac{-4}N\right)$ is not zero so $1-\left(\frac{D'}N\right)$ is even. So the integrality of $\dH_N\HGH_N(D)$ follows from (\ref{eqn:cls-ces:cNHND-N5mod6}) in this case too.
\end{proof}

\subsection{Jacobi Forms}\label{sec:prp-jac}

Here we explain our conventions for mock Jacobi forms. 
For this we assume some familiarity with the basic definitions. 
We refer to \S~3.1 of \cite{MR4127159} and the classic text \cite{MR781735} for background on Jacobi forms, and refer to
\S~3.2 of \cite{MR4127159} and \S~7.2 of \cite{Dabholkar:2012nd} for more on mock Jacobi forms. 

For $N$ a positive integer let 
$\GammaOJ(N)$ denote the 
group composed
of the pairs $(\gamma,(\lambda,\mu))$, with $\gamma\in \Gamma_0(N)$ (see (\ref{eqn:prp-cln:Gamma0N})) and $(\lambda,\mu)\in\ZZ^2$, with multiplication given by 
\begin{gather}\label{eqn:prp-jac:mltGammaOJ}
(\gamma,(\lambda,\mu))(\gamma',(\lambda',\mu'))
=
\left(\gamma\gamma',(\lambda,\mu)\gamma'+(\lambda',\mu')\right).
\end{gather}
Then $\GammaOJ(N)$ takes the form $\GammaOJ(N)=\Gamma_0(N)\ltimes \ZZ^2$, and in particular $\GammaOJ(1)=\SL_2(\ZZ)\ltimes \ZZ^2$.

We will also make use of the {\em metaplectic double cover} of $\SL_2(\ZZ)$, denoted $\widetilde{\SL}_2(\ZZ)$, which we realize as the set of pairs $(\gamma,\upsilon)$, where $\gamma\in \SL_2(\ZZ)$, and $\upsilon:\HH\to\CC$ is either of the two smooth functions such that $\upsilon(\tau)^2=c\tau+d$ when $(c,d)$ is the lower row of $\gamma$. The multiplication in this case is given by
\begin{gather}\label{eqn:prp-jac:mlttildesl2}
	(\gamma,\upsilon)(\gamma',\upsilon') = (\gamma\gamma',(\upsilon\circ \gamma') \upsilon').
\end{gather}
We write $\widetilde{\Gamma}_0(N)$ for the preimage of $\Gamma_0(N)$ in $\widetilde{\SL}_2(\ZZ)$. 

The
action of $\SL_2(\ZZ)$ on $\HH$ extends naturally to 
a 
transitive 
action
on the projective line $\PP^1(\QQ)=\QQ\cup\{\infty\}$ over $\QQ$. 
So 
we obtain an action of $\GammaOJ(1)$ on $\PP^1(\QQ)$ by letting the normal subgroup $\{(I,(\lambda,\mu))\}=\ZZ^2$ (cf.\ (\ref{eqn:prp-jac:mltGammaOJ})) act trivially, and then obtain an action of $\GammaOJ(N)$, for any $N$, by restriction.
Thus 
we may consider the set 
\begin{gather}\label{eqn:prp-jac:cuspsofGamma}
	\GammaOJ(N)\backslash\PP^1(\QQ) = \{\GammaOJ(N)\cdot\a\mid \a\in\QQ\cup\{\infty\}\}
\end{gather}
of
orbits of $\GammaOJ(N)$ on $\PP^1(\QQ)$. We call these orbits (\ref{eqn:prp-jac:cuspsofGamma}) the {\em cusps} of $\GammaOJ(N)$, and we refer to the orbit $\GammaOJ(N)\cdot\infty$ containing $\infty$ as the {\em infinite cusp} of $\GammaOJ(N)$.

For any positive integer $N$, a mock Jacobi form of weight $k$ and positive integer index $m$ for $\GammaOJ(N)$ 
admits a {\em theta-decomposition}
\begin{gather}\label{eqn:prp-jac:thetadecomp}
	\phi(\tau,z)
	= \sum_{r\xmod 2m} h_r(\tau)\theta_{m,r}(\tau,z)
\end{gather}
(cf.\ (\ref{eqn:int-mth:thetadecomp_phiW})),
where the theta series $\theta_{m,r}$, standard in the theory, are defined for integers $m$ and $r$, with $m$ positive, by setting
\begin{gather}\label{eqn:prp-jac:thetamr}
	\theta_{m,r}(\tau,z):=\sum_{s\equiv r\xmod 2m}q^{\frac{s^2}{4m}}y^s.
\end{gather}
Moreover, 
the functions $h_r$ in (\ref{eqn:prp-jac:thetadecomp}), called the {\em theta-coefficients} of $\phi$, 
admit {\em Fourier series} expansions of the form
\begin{gather}\label{eqn:prp-jac:hr}
	h_r(\tau)=\sum_{D\equiv r^2\xmod 4m} C_\phi(D,r)q^{-\frac{D}{4m}}.
\end{gather}

In (\ref{eqn:prp-jac:thetamr}--\ref{eqn:prp-jac:hr}), 
and throughout this work, we take $q=e^{2\pi i \tau}$ and $y=e^{2\pi i z}$ for $\tau\in\HH$ and $z\in\CC$
(cf.\ (\ref{eqn:prp-cln:msHN}), (\ref{eqn:prp-cln:msHCohN})),
and we only consider Jacobi forms of integer weight and positive integer index. 
(See \cite{MR3995918} for a discussion of mock Jacobi forms of half-integer index, along with applications to the module problem in umbral moonshine). 
Also, we write the theta-decomposition (\ref{eqn:prp-jac:thetadecomp}) compactly as
\begin{gather}\label{eqn:prp-jac:thetadecomp_vec}
	\phi(\tau,z) = h(\tau)^{\rm t}\theta_m(\tau,z)
\end{gather}
or even $\phi=h^{\rm t}\theta_m$
when convenient, taking $h=(h_r)$ to be the vector-valued function with the theta-coefficients $h_r$ (\ref{eqn:prp-jac:hr}) as its components, and taking $\theta_m=(\theta_{m,r})$ to be the vector-valued function whose components are the theta series $\theta_{m,r}$ (\ref{eqn:prp-jac:thetamr}). \footnote{The superscript in $h(\tau)^{\rm t}$ and $h^{\rm t}$ denotes matrix transposition.}

Note that there is redundancy in the theta-decomposition (\ref{eqn:prp-jac:thetadecomp}), 
because $\theta_{m,r}(\tau,-z)=\theta_{m,-r}(\tau,z)$ (cf.\ (\ref{eqn:prp-jac:thetamr})). This manifests in the rule that
\begin{gather}\label{eqn:prp-jac:hminusr}
	h_{r}(\tau) = (-1)^k h_{-r}(\tau)
\end{gather}
at the level of theta-coefficients (\ref{eqn:prp-jac:hr}), because $\phi(\tau,-z)=(-1)^k\phi(\tau,z)$ when $\phi$ is a mock Jacobi form of integer weight $k$ (by invariance under the action of $(-I,(0,0))\in \GammaOJ(1)$, cf.\ (\ref{eqn:prp-jac:slashkm})).

Especially when expressed in the form (\ref{eqn:prp-jac:thetadecomp_vec}), the 
theta-decomposition 
evidences a relationship between mock Jacobi forms of integer weight and vector-valued mock modular forms of half-integer weight that will be useful for us in what follows. To formulate this relationship precisely 
we first 
define 
the slash operator $\vf\mapsto \vf|_{k,m}(\gamma,\upsilon)$ 
on (vector-valued) functions on $\HH\times\CC$, for $k\in\frac12\ZZ$ and $m\in \ZZ^+$, and for $(\gamma,\upsilon)\in\widetilde{\SL}_2(\ZZ)$ (cf.\ (\ref{eqn:prp-jac:mlttildesl2})), by setting
\begin{gather}\label{eqn:prp-jac:slashkm}
	(\vf|_{k,m}(\gamma,\upsilon))(\tau,z) := \vf\left(\frac{a\tau+b}{c\tau+d},\frac{z}{c\tau+d}\right)\frac1{\upsilon(\tau)^{2k}}\exp\left(-2\pi i\frac{cmz^2}{c\tau+d}\right)
\end{gather}
in case $\gamma=\left(\begin{smallmatrix}a&b\\c&d\end{smallmatrix}\right)$.
Note that $\vf|_{k,m}(\gamma,\upsilon)$, for either choice of $\upsilon$, recovers the usual weight $k$ and index $m$ action of $(\gamma,(0,0))\in \GammaOJ(1)$, when $k$ is an integer.
Next we 
recall (see e.g.\ \S~3.1 of \cite{MR4127159})
that 
we may define a unitary representation $\varrho_m:\widetilde{\SL}_2(\ZZ)\to \GL_{2m}(\CC)$ 
by requiring that
\begin{gather}\label{eqn:prp-jac:barvarrhomtheta}
	\overline{\varrho_m(\gamma,\upsilon)} \theta_m |_{\frac12,m} (\gamma,\upsilon) = \theta_m
\end{gather}
for $(\gamma,\upsilon) \in \widetilde{\SL}_2(\ZZ)$, where
$\theta_m=(\theta_{m,r})$ is as in (\ref{eqn:prp-jac:thetadecomp_vec}).

Now let $\phi$ be a mock Jacobi form of weight $k$ and index $m$ for $\GammaOJ(N)$, 
and write $\hat\phi$ for the modular completion of $\phi$. Then a theta-decomposition 
\begin{gather}\label{eqn:prp-jac:thetadecomp_hatphi}
	\hat\phi(\tau,z) = \hat h(\tau)^{\rm t}\theta_{m}(\tau,z)
\end{gather}
of the form 
(\ref{eqn:prp-jac:thetadecomp_vec})
holds for $\hat\phi$, 
where the components $\hat h_r$ of 
the vector-valued function 
$\hat h=(\hat h_r)$, 
being the theta-coefficients of $\hat\phi$,
are the modular completions of the theta-coefficients of $\phi$.
From (\ref{eqn:prp-jac:barvarrhomtheta}--\ref{eqn:prp-jac:thetadecomp_hatphi}) and the invariance of $\hat \phi$ under the usual weight $k$ and index $m$ action of $\GammaOJ(N)$ we then obtain that
\begin{gather}\label{eqn:prp-jac:varrhomhath}
	{\varrho_m(\gamma,\upsilon)} \hat h |_{k-\frac12} (\gamma,\upsilon) = \hat h
\end{gather}
for $(\gamma,\upsilon) \in \widetilde{\Gamma}_0(N)$, where 
the action $f\mapsto f|_{k}(\gamma,\upsilon)$, on (vector-valued) functions on $\HH$, is given by
\begin{gather}\label{eqn:prp-jac:slashk}
	(f|_{k}(\gamma,\upsilon))(\tau) := f\left(\frac{a\tau+b}{c\tau+d}\right)\frac1{\upsilon(\tau)^{2k}}
\end{gather}
for $k\in \frac12\ZZ$ and $(\gamma,\upsilon)\in \widetilde{\SL}_2(\ZZ)$,
when $\gamma=\left(\begin{smallmatrix}a&b\\c&d\end{smallmatrix}\right)$. 

We call $\varrho_m$ the {\em Weil representation} of $\widetilde{\SL}_2(\ZZ)$ of {\em index $m$}, and we interpret 
(\ref{eqn:prp-jac:varrhomhath}) as saying that 
$\hat h=(\hat h_r)$ is a (real analytic) vector-valued modular form of weight $k-\frac12$ for the restriction of the Weil representation of index $m$ to $\widetilde{\Gamma}_0(N)$.
 
Before returning our focus to Jacobi forms we mention that, with a closer analysis of the Weil representation $\varrho_m$ (cf.\ (\ref{eqn:prp-jac:barvarrhomtheta})), it may be shown that if 
$\phi$ 
is a mock Jacobi form 
of weight $k$ and index $m$ for $\GammaOJ(N)$ for some $N$,
with theta-coefficients $h_r$ (cf.\ (\ref{eqn:prp-jac:thetadecomp})), 
then
\begin{gather}\label{eqn:prp-jac:checkh}
	\check{h}(\tau):=\sum_{r\xmod 2m} h_r(4m\tau)=\sum_{r\xmod 2m}\sum_{D\equiv r^2\xmod 4m}C_\phi(D,r)q^{-D}
\end{gather}
is a mock modular form in the Kohnen plus space \cite{MR575942,MR660784} of weight $k-\frac12$ for $\Gamma_0(4mN)$.
This explains one way in which scalar-valued (mock) modular forms of half-integer weight may stand in for (mock) Jacobi forms of integer weight, and vice-versa.
Note however that 
this construction (\ref{eqn:prp-jac:checkh}) vanishes identically unless $k$ is even, on account of (\ref{eqn:prp-jac:hminusr}).  
Also, it is generally not possible to recover $h=(h_r)$ from $\check{h}$ (cf.\ (\ref{eqn:prp-jac:thetadecomp_vec})), even when $k$ is even, without further assumptions on $\phi$. The exception to this rule is the case that $m$ is not composite (i.e.\ $m$ is $1$ or a prime), for that is the only case where $r^2\equiv s^2\xmod 4m$ implies $r\equiv \pm s\xmod 2m$ for all integers $r$ and $s$, and we have that $C_\phi(D,r)=C_\phi(D,-r)$ when $k$ is even, again by (\ref{eqn:prp-jac:hminusr}).

Typically, in the theory of mock Jacobi forms, the theta-coefficients (\ref{eqn:prp-jac:hr}) are allowed to have exponential growth at cusps. In this work we are specifically interested in mock Jacobi forms that are {\em holomorphic}, in the sense that the 
vector-valued form $\hat h=(\hat h_r)$ 
in (\ref{eqn:prp-jac:thetadecomp_hatphi}) remains bounded as $\Im(\tau)\to \infty$, and 
similarly at non-infinite cusps (cf. (\ref{eqn:prp-jac:cuspsofGamma})). 
With the representation $\varrho_m$ of (\ref{eqn:prp-jac:barvarrhomtheta}) in hand we may formulate this concretely as the condition that
\begin{gather}\label{eqn:prp-jac:varrhomhath_bounded}
	\left({\varrho_m(\gamma,\upsilon)} \hat h |_{k-\frac12} (\gamma,\upsilon)\right)(\tau) = O(1)
\end{gather}
as $\Im(\tau)\to \infty$, for all $(\gamma,\upsilon)\in \widetilde{\SL}_2(\ZZ)$ (cf.\ (\ref{eqn:prp-jac:varrhomhath})).
All the mock Jacobi forms we consider in this work will be holomorphic in this sense. 

For 
$N$ a positive integer
let
$\JJ_{k,m}(N)$ 
denote the space of holomorphic mock Jacobi forms of weight $k$ and index 
$m$ 
for $\GammaOJ(N)$.
The subspace of holomorphic Jacobi forms is composed of the $\phi\in\JJ_{k,m}(N)$ such that $\hat\phi=\phi$ (cf.\ (\ref{eqn:prp-jac:thetadecomp_hatphi})), and we 
denote it $J_{k,m}(N)$.
The subspace of cuspidal Jacobi forms is composed of the $\phi\in J_{k,m}(N)$ such that 
\begin{gather}\label{eqn:prp-jac:varrhomhath_vanishing}
	\left({\varrho_m(\gamma,\upsilon)} h |_{k-\frac12} (\gamma,\upsilon)\right)(\tau) \to 0
\end{gather}
as $\Im(\tau)\to \infty$, for all $(\gamma,\upsilon)\in \widetilde{\SL}_2(\ZZ)$ (cf.\ (\ref{eqn:prp-jac:varrhomhath_bounded})),
and
we 
denote it $S_{k,m}(N)$. 
Also, we use a subscript $\ZZ$ to specify the submodules composed of forms with rational integer Fourier coefficients, so that 
\begin{gather}\label{eqn:prp-jac:JJkmGamma0JNZZ}
\JJ_{k,m}(N)_\ZZ
:=\{
\phi\in \JJ_{k,m}(N) \mid C_\phi(D,r)\in\ZZ\text{ for all $D$ and $r$}
\}.
\end{gather}
The $\ZZ$-modules $J_{k,m}(N)_\ZZ$ and $S_{k,m}(N)_\ZZ$ are defined analogously.

If $\phi$ belongs to $\JJ_{k,m}(N)$ and $N$ is the smallest positive integer for which this statement is true we say that $\phi$ has {\em level} $N$.  

Recall that the space $S_{k,m}(N)$ becomes an inner product space when equipped with the Petersson inner product.
In this work we define this inner product concretely by setting
\begin{gather}\label{eqn:prp-jac:Pet}
	\langle\phi,\phi'\rangle
	:=
	\frac1{\indo(N)}
	\frac{3}{\pi\sqrt{2m}}	
	\sum_{r\xmod 2m}
	\int_{\mc{F}(N)} 
	h_{r}(\tau)
	\overline{h'_{r}(\tau)}
	\tau_2^{k-\frac52}
	{\rm d}\tau_1
	{\rm d}\tau_2
\end{gather}
for $\phi,\phi'\in S_{k,m}(N)$, where $h=(h_r)$ and $h'=(h'_r)$ are the theta-coefficients (cf.\ (\ref{eqn:prp-jac:thetadecomp})) of $\phi$ and $\phi'$, respectively, 
$\indo(N)$ is as in (\ref{eqn:prp-cln:HN0}), 
we take $\mc{F}(N)$ to be a fundamental domain for $\Gamma_0(N)$, 
and we write
$\tau=\tau_1+i\tau_2$ for the decomposition of $\tau$ into its real and imaginary parts.
Note that $\frac{\pi}3\indo(N)$ is the area of $\mc{F}(N)$. 
Scaling by $\indo(N)$ in (\ref{eqn:prp-jac:Pet}) we obtain a normalization of $\langle \phi,\phi'\rangle$ that is independent of $N$, 
so long as $N$ is such that $\phi$ and $\phi'$ both belong to $S_{k,m}(N)$. 

To conclude this section we recall the $\CC$-linear {\em shadow map}, denoted $\xi$, which defines an exact sequence
\begin{gather}\label{eqn:prp-jac:xi_ses}
	0\to J_{k,m}(N)\to \JJ_{k,m}(N)\xrightarrow{\xi} J_{3-k,m}^\sk(N),
\end{gather}
where $J^\sk_{3-k,m}(N)$ is the space of skew-holomorphic Jacobi forms of weight $3-k$ and index $m$ for $\GammaOJ(N)$.
\footnote{We refer to \S~3.1 of \cite{MR4127159} 
for background on skew-holomorphic Jacobi forms, and also recommend 
Skoruppa's works \cite{MR1096975,MR1074485}, where skew-holomorphic Jacobi forms were first introduced.}

\subsection{Optimality}\label{sec:prp-opt}

We have exposited a notion of optimality for holomorphic mock Jacobi forms of weight $2$ and index $1$ in \S~\ref{sec:int-mth}. 
In this section we explain this notion 
more carefully. 
Moreover, with future applications in mind we work in greater generality.

To begin 
we recall 
that a holomorphic mock Jacobi form $\phi \in\JJ_{2,1}(N)$ is called {\em optimal} if its theta-coefficients vanish in the neighborhood of any non-infinite cusp (cf.\ (\ref{eqn:int-mth:trv}--\ref{eqn:int-mth:hWgFS})). We now formulate this concretely, for $\phi\in\JJ_{k,m}(N)$ with arbitrary integer weight $k$ and positive integer index $m$, as the requirement that 
\begin{gather}\label{eqn:prp-opt:optimal}
	\left({\varrho_m(\gamma,\upsilon)} \hat h |_{k-\frac12} (\gamma,\upsilon)\right)(\tau) \to 0
\end{gather}
as $\Im(\tau)\to \infty$ (cf.\ (\ref{eqn:prp-jac:slashk}--\ref{eqn:prp-jac:varrhomhath_bounded})), whenever $(\gamma,\upsilon)\in\widetilde{\SL}_2(\ZZ)$ 
(cf.\ (\ref{eqn:prp-jac:mlttildesl2}))
is such that $\gamma\cdot\infty$ does not belong to the infinite cusp of $\GammaOJ(N)$ (cf.\ (\ref{eqn:prp-jac:cuspsofGamma})). 

For any holomorphic mock Jacobi form $\phi\in\JJ_{k,m}(N)$, and in particular for the optimal ones, 
we must have
\begin{gather}\label{eqn:prp-opt:c-optimal}
	\phi(\tau,z)=-\opc+O(q)
\end{gather}
as $\Im(\tau)\to\infty$, 
for any fixed $z$, 
for some constant $\opc$.
So we may stratify the optimal forms in a space by specifying a particular choice. 
With this in mind we say that $\phi\in \JJ_{k,m}(N)$ is {\em $\opc$-optimal} for a given constant $\opc$, if $\phi$ is optimal (\ref{eqn:prp-opt:optimal}) and satisfies (\ref{eqn:prp-opt:c-optimal}). 

Note that $0$-optimality 
is equivalent to cuspidality (\ref{eqn:prp-jac:varrhomhath_vanishing}), so that if $\phi$ and $\phi'$ are $\opc$-optimal forms in $\JJ_{k,m}(N)$ for some $\opc$ then their difference is cuspidal, $\phi-\phi'\in S_{k,m}(N)$.

Beyond the consideration of mock Jacobi forms on an individual basis, 
we are really interested in collections of mock Jacobi forms that are organized by finite groups. 
To put this
precisely 
suppose that 
$\sG$ is a finite group
and let $R(\sG)$ denote the 
Grothendieck group of the category of finitely generated $\CC\sG$-modules. 
By a {\em virtual} $\sG$-module we mean an element of $R(\sG)$, and by a {\em virtual graded} $\sG$-module we mean an indexed collection 
\begin{gather}\label{eqn:virtualgraded}
V=\bigoplus_{i\in I}V_i
\end{gather} 
of virtual $\sG$-modules $V_i\in R(\sG)$, for some indexing set $I$. 

Let $\Irr(\sG)$ denote the set of irreducible characters of $\sG$. For concreteness we employ the natural identification
\begin{gather}\label{eqn:prp-opt:RsG}
	R(\sG) = \sum_{\chi\in \Irr(\sG)}\ZZ\chi
\end{gather}
of $R(\sG)$ 
with the free $\ZZ$-module 
generated by 
$\Irr(\sG)$ 
in what follows.
Then, for $V\in R(\sG)$ a virtual $\sG$-module we have
\begin{gather}\label{eqn:prp-opt:V=mchiVvhi}
V=\sum_{\chi\in\Irr(\sG)} m_\chi(V)\chi
\end{gather}
for some uniquely determined integers $m_\chi(V)\in \ZZ$. 
Given $V\in R(\sG)$ and $\chi\in \Irr(\sG)$ we call $m_\chi(V)$ as in (\ref{eqn:prp-opt:V=mchiVvhi}) the {\em multiplicity} of $\chi$ in $V$, and given also $\sg\in \sG$ we interpret $\tr(\sg|V)$ as meaning
\begin{gather}\label{eqn:prp-opt:trsgV}
	\tr(\sg|V) = \sum_{\chi\in\Irr(\sG)} m_\chi(V) \chi(\sg).
\end{gather}

Now fix an integer $k$ and a positive integer $m$, and suppose that $W$ is a
virtual graded $\sG$-module 
(cf.\ (\ref{eqn:virtualgraded}))
with grading of the form
\begin{gather}\label{eqn:prp-opt:W}
W=\bigoplus_{r\xmod 2m}\bigoplus_{D\equiv r^2\xmod 4m}W_{r,\frac{D}{4m}}.
\end{gather} 
Then, given $\sg\in\sG$ define 
the associated {\em McKay--Thompson series}, denoted $\phi^W_\sg$, 
by 
requiring that 
\begin{gather}\label{eqn:prp-opt:phiWsg}
\phi^W_\sg(\tau,z)=h^W_\sg(\tau)^{\rm t}\theta_m(\tau,z)
\end{gather} 
(cf.\ (\ref{eqn:prp-jac:thetadecomp_hatphi})), where the components of the vector-valued function $h^W_\sg= (h^W_{\sg,r})$ 
are defined 
by 
setting
\begin{gather}\label{eqn:prp-opt:hWsgr}
	h^W_{\sg,r}(\tau):=\sum_{D\equiv r^2\xmod 4m} 
	\tr\left(\sg\left|W_{r,\frac{D}{4m}}\right.\right)q^{-\frac{D}{4m}}
\end{gather}
(cf.\ (\ref{eqn:prp-opt:trsgV})).
We are interested in the situation that 
$\phi^W_\sg$ is a holomorphic mock Jacobi form of 
weight $k$, 
index $m$ and level $o(\sg)$---and 
moreover satisfies the optimality condition (\ref{eqn:prp-opt:optimal}--\ref{eqn:prp-opt:c-optimal}) just discussed---for each $\sg\in \sG$.

This leads us to a 
notion of optimality for virtual graded $\sG$-modules.
Specifically, given a fixed constant $\opc$ we say that a virtual graded $\sG$-module $W$ as in (\ref{eqn:prp-opt:W}) is {\em $\opc$-optimal (mock Jacobi)} of weight $k$ and index $m$ 
if for all $\sg\in \sG$ we have
\begin{gather}\label{eqn:prp-opt:alpha_condition}
\phi^W_\sg \in
\JJ_{k,m}(N)
\end{gather}
when $N=o(\sg)$, but not for any smaller value of $N$, 
and if $\phi^W_\sg$ also satisfies the $\opc$-optimality conditions 
(\ref{eqn:prp-opt:optimal}--\ref{eqn:prp-opt:c-optimal})
for each $\sg$. 
Also, we call $W$ as in (\ref{eqn:prp-opt:W}) {\em optimal} 
if it is $\opc$-optimal for some $\opc$.

We now 
consider the task of classifying the optimal 
(mock Jacobi) virtual graded $\sG$-modules of given weight and index for a finite group $\sG$. 
For this we let 
$\mc{W}^\opt_{k,m}(\sG)$ denote the set of optimal virtual graded $\sG$-modules of weight $k$ and index $m$,
and given $\opc\in \ZZ$ let $\mc{W}^\opt_{k,m}(\sG)_\opc$ denote the subset of $\opc$-optimal modules. 
If $W\in \mc{W}^\opt_{k,m}{(\sG)}_\opc$ and $W'\in \mc{W}^\opt_{k,m}{(\sG)}_{\opc'}$ for some $\opc,\opc'\in \ZZ$ then $W+W'\in \mc{W}^\opt_{k,m}{(\sG)}_{\opc+\opc'}$, so 
$\mc{W}^\opt_{k,m}{(\sG)}$ and $\mc{W}^\opt_{k,m}{(\sG)}_0$ are free abelian groups, and
we have 
a decomposition
\begin{gather}\label{eqn:prp-opt:mcWoptkmsG}
	\mc{W}^\opt_{k,m}(\sG)
	=
	\sum_{\opc\in\ZZ} \mc{W}^\opt_{k,m}{(\sG)}_\opc,
\end{gather}
of the former into modules for the latter.

To get a better handle on the nature of $\mc{W}^\opt_{k,m}{(\sG)}_0$ we consider the lattice
\begin{gather}\label{eqn:prp-opt:LkmsG}
L_{k,m}(\sG) :=S_{k,m}(\#\sG)_\ZZ\otimes_\ZZ R(\sG)
\end{gather}
(cf.\ (\ref{eqn:prp-jac:JJkmGamma0JNZZ}), (\ref{eqn:prp-opt:RsG})), with bilinear form obtained by combining the Petersson inner product on $S_{k,m}(\#\sG)$ with the usual inner product on $R(\sG)$ {\color{black} (see \eqref{eqn:prp-opt:lambdalambda'} below)}.
To put this concretely we regard an element $\lambda\in L_{k,m}(\sG)$ as defining a $S_{k,m}(\#\sG)$-valued class function $\sg\mapsto \lambda_\sg$ on $\sG$ by setting
\begin{gather}\label{eqn:prp-opt:lambdasg}
	\lambda_\sg(\tau,z):=\sum_{i,\chi} n_{i,\chi}\chi(\sg)\varphi_i(\tau,z)
\end{gather}
in case 
$\lambda=\sum_{i,\chi} n_{i,\chi}
\varphi_i
\otimes 
\chi
$ for some integers $n_{i,\chi}$, for some subset $\{\varphi_i\}\subset S_{k,m}(\#\sG)_\ZZ$.
We then define a (generally non-integer-valued) symmetric bilinear form $(\cdot\,,\cdot)$ on 
$L_{k,m}(\sG)$ by setting
\begin{gather}\label{eqn:prp-opt:lambdalambda'}
	( \lambda,\lambda')
	:=
	\frac{1}{\#\sG}
	\sum_{\sg\in \sG}
	\langle \lambda_{\sg},\lambda'_\sg\rangle
\end{gather}
for $\lambda,\lambda'\in L_{k,m}(\sG)$, where $\langle\cdot\,,\cdot\rangle$ 
denotes the Petersson inner product on $S_{k,m}(\#\sG)$, as defined in (\ref{eqn:prp-jac:Pet}).

The significance of the construction (\ref{eqn:prp-opt:LkmsG}) is that we may naturally identify $\mc{W}^\opt_{k,m}{(\sG)}_0$ with a subset of $L_{k,m}(\sG)$, and thereby equip it with the structure of a lattice. 
Indeed, given $W\in \mc{W}^\opt_{k,m}{(\sG)}_0$, and taking $\{\varphi_i\}$ to be a $\ZZ$-basis for $S_{k,m}(\#\sG)_\ZZ$, we have that
\begin{gather}\label{eqn:prp-opt:phiWsgsum}
	\phi^W_\sg = \sum_i m_i(\sg)\varphi_i
\end{gather}
for each $\sg\in \sG$, for some class functions $\sg\mapsto m_i(\sg)$, since $0$-optimal forms are cuspidal by definition (cf.\ (\ref{eqn:prp-jac:varrhomhath_vanishing}), (\ref{eqn:prp-opt:optimal})).
Any class function on $\sG$ is a linear combination of the irreducible characters of $\sG$, so we have 
\begin{gather}\label{eqn:prp-opt:misg}
m_i(\sg)=\sum_{\chi\in\Irr(\sG)}m_{i,\chi}\chi(\sg),
\end{gather} 
for each $i$, for some scalars $m_{i,\chi}\in\CC$. An elementary argument verifies that $m_{i,\chi}\in\ZZ$ for all $i$ and $\chi$.
Thus, substituting (\ref{eqn:prp-opt:misg}) into (\ref{eqn:prp-opt:phiWsgsum}), and writing $\phi^W$ for the $S_{k,m}(\#\sG)$-valued class function on $\sG$ given by $\sg\mapsto \phi^W_\sg$ (\ref{eqn:prp-opt:phiWsgsum}), we obtain the identification
\begin{gather}\label{eqn:prp-opt:phiW}
	\phi^W = \sum_{i,\chi} m_{i,\chi}\varphi_i\otimes \chi
\end{gather}
of $\phi^W$ as an element of $L_{k,m}(\sG)$, and the association $W\mapsto \phi^W$ defines the promised embedding 
\begin{gather}\label{eqn:prp-opt:mcWsGtoLkmsG}
	\mc{W}^\opt_{k,m}{(\sG)}_0\to L_{k,m}(\sG).
\end{gather}

We henceforth write $\mc{L}^\opt_{k,m}(\sG)$ for the lattice structure on $\mc{W}^\opt_{k,m}{(\sG)}_0$ that we obtain by setting
\begin{gather}\label{eqn:prp-opt:langleWWprangle}
	( W,W') := (\phi^W,\phi^{W'})
\end{gather}
for $W,W'\in \mc{L}^\opt_{k,m}(\sG) = \mc{W}^\opt_{k,m}{(\sG)}_0$, where $(\cdot\,,\cdot)$ on the right-hand side of (\ref{eqn:prp-opt:langleWWprangle}) is the symmetric bilinear form (\ref{eqn:prp-opt:lambdalambda'}) on $L_{k,m}(\sG)$.

Now define $\opc^\opt_{k,m}(\sG)$ to be the minimal positive integer $\opc$ for which a $\opc$-optimal virtual graded $\sG$-module of weight $k$ and index $m$ exists, 
\begin{gather}\label{eqn:prp-opt:opcopt}
	\opc^\opt_{k,m}(\sG) := \min\left\{\opc\in \ZZ^+\left|\, \mc{W}^\opt_{k,m}{(\sG)}_\opc\neq \emptyset\right.\right\}.
\end{gather}
The next result follows from our remarks leading to (\ref{eqn:prp-opt:mcWoptkmsG}), the finiteness of the rank of $S_{k,m}(\#\sG)_\ZZ$, and the fact that the Fourier coefficients of the McKay--Thompson series $\phi^W_\sg$ (\ref{eqn:prp-opt:phiWsg}) are algebraic integers of bounded degree by construction (\ref{eqn:prp-opt:hWsgr}). 
\begin{pro}\label{pro:prp-opt:mcWG}
The sets $\mc{W}^\opt_{k,m}(\sG)$ and  $\mc{L}^\opt_{k,m}(\sG)$ are naturally free abelian groups of finite rank. 
If $\opc\equiv 0\xmod \opc^\opt_{k,m}(\sG)$ then $\mc{W}^\opt_{k,m}{(\sG)}_\opc$ is naturally a $\mc{L}^\opt_{k,m}(\sG)$-torsor. 
If $\opc\not\equiv 0\xmod \opc^\opt_{k,m}(\sG)$ then $\mc{W}^\sG_{k,m}{(\sG)}_\opc$ is empty. 
\end{pro}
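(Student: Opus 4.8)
The plan is to exhibit $\mc{W}^\opt_{k,m}(\sG)$ as an extension of an infinite cyclic group by the lattice $\mc{L}^\opt_{k,m}(\sG)=\mc{W}^\opt_{k,m}(\sG)_0$, and then read off all three assertions from the resulting short exact sequence. First I would pin down $\mc{L}^\opt_{k,m}(\sG)$. The assignment $W\mapsto\phi^W$ of (\ref{eqn:prp-opt:phiW})--(\ref{eqn:prp-opt:mcWsGtoLkmsG}) is an injective homomorphism $\mc{W}^\opt_{k,m}(\sG)_0\to L_{k,m}(\sG)$: it is injective because the family $(\phi^W_\sg)_{\sg\in\sG}$ determines every trace $\tr(\sg|W_{r,\frac{D}{4m}})$, and hence, by the linear independence of the irreducible characters, every graded piece $W_{r,\frac{D}{4m}}\in R(\sG)$. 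Since $S_{k,m}(\#\sG)$ is finite-dimensional its integral form $S_{k,m}(\#\sG)_\ZZ$ is a free $\ZZ$-module of finite rank, and $R(\sG)$ is free of rank $\#\Irr(\sG)$ by (\ref{eqn:prp-opt:RsG}), so $L_{k,m}(\sG)=S_{k,m}(\#\sG)_\ZZ\otimes_\ZZ R(\sG)$ is free of finite rank; being a subgroup of it, $\mc{L}^\opt_{k,m}(\sG)$ is free of finite rank as well. The only delicate point is that the image of $W\mapsto\phi^W$ lands in the integral lattice rather than in its $\CC$-span, which is the ``elementary argument'' behind (\ref{eqn:prp-opt:misg}): matching the Fourier coefficients of $f_\chi:=\sum_i m_{i,\chi}\varphi_i$ against (\ref{eqn:prp-opt:hWsgr}) and invoking linear independence of characters shows that $f_\chi$ has the rational-integer coefficients $m_\chi(W_{r,\frac{D}{4m}})$ (the multiplicities of (\ref{eqn:prp-opt:V=mchiVvhi})), so $f_\chi\in S_{k,m}(\#\sG)_\ZZ$ and therefore $m_{i,\chi}\in\ZZ$ by the $\CC$-linear independence of the basis $\{\varphi_i\}$.

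Next I would organize the whole group around the constant term. By (\ref{eqn:prp-opt:c-optimal}) each optimal $W$ carries a single integer $\opc=\opc(W)$, visible already as $\phi^W_\se=-\opc+O(q)$ at the identity $\se$, and the additivity relation recorded just before (\ref{eqn:prp-opt:mcWoptkmsG}) says precisely that $c\colon W\mapsto\opc(W)$ is a group homomorphism $\mc{W}^\opt_{k,m}(\sG)\to\ZZ$ with kernel $\mc{W}^\opt_{k,m}(\sG)_0=\mc{L}^\opt_{k,m}(\sG)$. Its image is a subgroup of $\ZZ$, hence $d\ZZ$ for some $d\ge 0$; since a positive-optimal module exists (so that $\opc^\opt_{k,m}(\sG)$ is defined), $d$ is the least positive integer in the image, which is $\opc^\opt_{k,m}(\sG)$ by (\ref{eqn:prp-opt:opcopt}). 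Thus $c$ fits into an exact sequence $0\to\mc{L}^\opt_{k,m}(\sG)\to\mc{W}^\opt_{k,m}(\sG)\xrightarrow{c}\opc^\opt_{k,m}(\sG)\ZZ\to 0$; the quotient is free, so the sequence splits and $\mc{W}^\opt_{k,m}(\sG)\cong\mc{L}^\opt_{k,m}(\sG)\oplus\ZZ$ is free of finite rank.

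Finally I would extract the last two assertions from this sequence. If $\opc\not\equiv 0\pmod{\opc^\opt_{k,m}(\sG)}$ then $\opc\notin\opc^\opt_{k,m}(\sG)\ZZ=\operatorname{im}(c)$, so $\mc{W}^\opt_{k,m}(\sG)_\opc=c^{-1}(\opc)=\emptyset$. If instead $\opc\equiv 0\pmod{\opc^\opt_{k,m}(\sG)}$ then $c^{-1}(\opc)$ is a nonempty coset of $\ker(c)=\mc{L}^\opt_{k,m}(\sG)$, on which $\mc{L}^\opt_{k,m}(\sG)$ acts by $(L,W)\mapsto W+L$; this action is well defined and free by construction, and transitive because the difference of two $\opc$-optimal modules is $0$-optimal (the module analogue of the cuspidality remark following (\ref{eqn:prp-opt:c-optimal})), so $\mc{W}^\opt_{k,m}(\sG)_\opc$ is a $\mc{L}^\opt_{k,m}(\sG)$-torsor. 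The step I expect to require the most care is the integrality claim of the first paragraph: the traces $\tr(\sg|W_{r,\frac{D}{4m}})$ are a priori only algebraic integers of bounded degree, and it is exactly that boundedness, together with the finite rank of $S_{k,m}(\#\sG)_\ZZ$, that must be leveraged to guarantee a landing inside the integral lattice $L_{k,m}(\sG)$ rather than a rational or complex enlargement of it.
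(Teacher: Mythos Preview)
Your argument is correct and matches the paper's own one-sentence justification, which simply points to the additivity recorded before (\ref{eqn:prp-opt:mcWoptkmsG}), the finite rank of $S_{k,m}(\#\sG)_\ZZ$, and the algebraic-integer nature of the trace values; your short exact sequence packages exactly these ingredients. Your closing self-doubt is unwarranted: the integrality you need is precisely the observation you already made in the first paragraph, that the Fourier coefficients of $f_\chi$ are the multiplicities $m_\chi(W_{r,\frac{D}{4m}})\in\ZZ$ (which are integers by the very definition (\ref{eqn:prp-opt:RsG}) of $R(\sG)$), so the ``algebraic integers of bounded degree'' remark in the paper is just another way of phrasing what you have already carried out.
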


We may interpret Proposition \ref{pro:prp-opt:mcWG} as saying that the optimal virtual graded $\sG$-modules of weight $k$ and index $m$ are classified by $\opc^\opt_{k,m}(\sG)$ and the lattice $\mc{L}^\opt_{k,m}(\sG)$. 
For this reason we refer to the determination of $\opc^\opt_{k,m}(\sG)$ and $\mc{L}^\opt_{k,m}(\sG)$ as the {\em classification problem} for optimal virtual graded $\sG$-modules of weight $k$ and index $m$.

To conclude this section we 
mention that there is a counterpart operation $W\mapsto \check{W}$ on modules $W$ as in (\ref{eqn:prp-opt:W}), to the construction (\ref{eqn:prp-jac:checkh}) that translates from Jacobi forms of integer weight to modular forms in Kohnen plus spaces. 
Namely, given a virtual graded $\sG$-module $W$ as in (\ref{eqn:prp-opt:W}) we may consider the virtual graded $\sG$-module $\check{W}=\bigoplus_D \check{W}_D$ determined by setting
\begin{gather}\label{eqn:prp-opt:checkW}
	\check{W}_D := \sum_{\substack{r\xmod 2m\\D\equiv r^2\xmod 4m}} W_{r,\frac{D}{4m}}.
\end{gather}
Then the McKay--Thompson series associated to the action of $\sG$ on $\check{W}$ are none other than the functions $\check{h}^W_\sg$ (cf.\ (\ref{eqn:prp-jac:checkh})), where $h^W_\sg=(h^W_{\sg,r})$ is as in (\ref{eqn:prp-opt:hWsgr}). 
That is to say, we have
\begin{gather}\label{eqn:prp-opt:checkhWg}
	\check{h}^W_\sg(\tau)=\sum_D \tr\left(\sg\left|\check{W}_D\right.\right)q^{-D}
\end{gather}
for $\sg\in \sG$.

\subsection{Our Main Focus}\label{sec:prp-fcs}

In \S\S~\ref{sec:prp-jac}--\ref{sec:prp-opt} we have 
discussed mock Jacobi forms in arbitrary integer weight and positive integer index. 
In this section we specialize to the situation upon which we focus in the remainder of this work, whereby the weight is $2$ and the index is $1$. 
Furthermore, we explain how it is that the constructions of \S~\ref{sec:prp-cln} provide examples of the structures discussed in \S\S~\ref{sec:prp-jac}--\ref{sec:prp-opt}.

Observe that with 
the 
specialization to 
$m=1$ 
in place 
the theta-decomposition (\ref{eqn:prp-jac:thetadecomp}) simplifies to
\begin{gather}\label{eqn:prp-jac:thetadecomp_indexone}
	\phi(\tau,z) = h_0(\tau)\theta_{1,0}(\tau,z) + h_1(\tau)\theta_{1,1}(\tau,z)
\end{gather}
(cf.\ (\ref{eqn:int-mth:thetadecomp_phiW})), and this entails a simplification in the notation for the Fourier coefficients 
of the $h_r$. 
Specifically, the term $C_\phi(D,r)$ in (\ref{eqn:prp-jac:hr}) depends now only on $D$ and the parity of $r$, and since $D\equiv r^2\xmod 4$ the parity of $D$ and $r$ must match.
So we have
$C_\phi(D,r)=C_\phi(D,D)$ for all $D$, and thus there is no loss in
simply 
writing $C_\phi(D)$. 
Putting this together with (\ref{eqn:prp-jac:thetamr}) and (\ref{eqn:prp-jac:thetadecomp_indexone}) we obtain the Fourier series expansion 
\begin{gather}\label{eqn:prp-fcs:phi}
	\phi(\tau,z)=\sum_{n,s\in\ZZ}C_\phi(s^2-4n)q^ny^s
\end{gather} 
for $\phi$ of index $1$ 
(cf.\ (\ref{eqn:prp-cln:msHN}), (\ref{eqn:prp-cln:msHCohN})),
where $C_\phi(D)=0$ for $D>0$ in case $\phi$ is 
holomorphic (cf.\ (\ref{eqn:prp-jac:hr}), (\ref{eqn:prp-jac:varrhomhath_bounded})).
In particular, the coefficient of $q^ny^s$ in (\ref{eqn:prp-fcs:phi}) depends only on the discriminant $D=s^2-4n$ (cf.\ \S~\ref{sec:prp-cln}).
Note that a similar simplification of notation (\ref{eqn:prp-fcs:phi}) can be made in the case of prime index as well, on the strength of the comments that follow (\ref{eqn:prp-jac:checkh}).

Observe also that for $\phi$ a mock Jacobi form of index $1$ the construction $h\mapsto \check{h}$ of (\ref{eqn:prp-jac:checkh}) specializes to 	
\begin{gather}\label{eqn:prp-jac:checkh_indexone}
	\check h(\tau) := h_0(4\tau)+h_1(4\tau)=\sum_D C_\phi(D)q^{-D},
\end{gather}
where 
the $h_r$ are as in (\ref{eqn:prp-jac:thetadecomp_indexone}) and $C_\phi(D)$ is as in (\ref{eqn:prp-fcs:phi}),
and the corresponding construction $W\mapsto \check{W}$ of
(\ref{eqn:prp-opt:checkW})
is given in index $1$ by setting 
$\check{W}_D=
W_{D,\frac{D}{4}}$. 
Thus there is no loss of information in considering $\check{W}=\bigoplus_D \check{W}_D$ in place of $W$, so we do so. 
And again, we may do similarly, when the index $m$ is prime. However, we henceforth drop the accent from $\check{W}$ in order to simplify notation. That is, 
we write simply
\begin{gather}\label{eqn:prp-fcs:W}
	W=\bigoplus_D W_D
\end{gather}
{\color{black} to indicate} the grading of a virtual graded $\sG$-module $W$ as in (\ref{eqn:prp-opt:W}) when $m=1$, where $W_D$ in (\ref{eqn:prp-fcs:W}) is $W_{D,\frac{D}{4}}$ in (\ref{eqn:prp-opt:W}).
With this convention the associated McKay--Thompson series $\phi^W_\sg$ (see (\ref{eqn:prp-opt:phiWsg}--\ref{eqn:prp-opt:hWsgr})) may be defined succinctly by setting
\begin{gather}\label{eqn:prp-fcs:phiWsg}
	\phi^W_\sg(\tau,z):=\sum_{n,s\in\ZZ}\tr\left(\sg\left|W_{s^2-4n}\right.\right)q^ny^s
\end{gather}
(cf.\ (\ref{eqn:prp-opt:trsgV}), (\ref{eqn:prp-fcs:phi})).

Everything we have said so far in this section applies to mock Jacobi forms $\phi$ of index $1$ with arbitrary integer weight $k$ (and with suitable modifications, also to forms with prime index). Now taking the specialization to $k=2$ into account 
we obtain that the relevant skew-holomorphic Jacobi forms belong to $J^\sk_{1,1}(N)$ according to (\ref{eqn:prp-jac:xi_ses}), and in particular have weight $1$. 
In the terminology of \cite{MR4127159}, such skew-holomorphic Jacobi forms are of {\em theta-type}, by force of a result \cite{MR0472707} of Serre--Stark. 
That is, for $\varphi\in J^\sk_{1,1}(N)$ we must have a theta-decomposition 
\begin{gather}\label{eqn:prp-jac:thetadecomp_skewindexone}
	\vf(\tau,z) = \overline{f_0(\tau)} \theta_{1,0}(\tau,z) + \overline{f_1(\tau)}\theta_{1,1}(\tau,z)
\end{gather}
(cf.\ (\ref{eqn:prp-jac:thetadecomp}), (\ref{eqn:prp-jac:thetadecomp_indexone})), where the complex conjugates $f_r$ of the theta-coefficients $\overline{f_r}$ of $\vf$ are linear combinations of the {\em Thetanullwerte} 
\begin{gather}\label{eqn:prp-fcs:Thetanullwerte}
	\theta_{m,r}^0(\tau):=\theta_{m,r}(\tau,0)
\end{gather}
(cf.\ (\ref{eqn:prp-jac:thetamr})).

As we alluded to in \S~\ref{sec:prp-cln}, the 
generating functions $\sHGH_N$ (\ref{eqn:prp-cln:msHN})
and $\sHCE_N$ (\ref{eqn:prp-cln:msHCohN})
are (mock) modular. 
To put this precisely 
we define a theta-type skew-holomorphic Jacobi form $t_d\in J^\sk_{1,1}(d^2)$, for $d$ a positive integer, by setting $t_d:=t_d^0+t_d^1$, where
\begin{gather}\label{eqn:prp-fcs:tdk}
t^s_{d}(\tau,z):=\overline{\theta_{d^2,sd^2}^0(\tau)}\theta_{1,sd}(\tau,z)
\end{gather}
(cf.\ (\ref{eqn:prp-jac:thetadecomp_skewindexone})), 
for $\theta_{m,r}^0$ as in (\ref{eqn:prp-fcs:Thetanullwerte}), and $\theta_{m,r}$ as in (\ref{eqn:prp-jac:thetamr}).
The most important case is $d=1$, on account of the fact that
\begin{gather}\label{eqn:prp-fcs:t1}
	t_1(\tau,z) = \overline{\theta_{1,0}^0(\tau)}\theta_{1,0}(\tau,z) + \overline{\theta_{1,1}^0(\tau)}\theta_{1,1}(\tau,z)
\end{gather}
spans $J^\sk_{1,1}(N)$ for $N$ square-free, and belongs to $J^\sk_{1,1}(N)$ for every $N$.
\footnote{Concrete methods for computing spaces of holomorphic and skew-holomorphic Jacobi forms, including $J^\sk_{1,1}(N)$, may be found in \cite{MR2512363}.}

\begin{pro}\label{pro:prp-fcs:msHNmock}
For $N$ a positive integer the function $\sHGH_N$ of (\ref{eqn:prp-cln:msHN}) belongs to $\JJ_{2,1}(N)$, and the 
shadow of $\sHGH_N$ is a linear combination of the functions $t_{d}$ for which $d$ is a positive integer such that $d^2|N$. 
For $N$ prime the function $\sHCE_N$ of (\ref{eqn:prp-cln:msHCohN}) belongs to $J_{2,1}(N)$. 
\end{pro}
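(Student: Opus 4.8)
My plan is to pass throughout to the theta-coefficient vector $h=(h_0,h_1)$ of $\sHGH_N$ and its modular completion $\widehat{\sHGH_N}$, and to prove the three assertions by checking the transformation law \eqref{eqn:prp-jac:varrhomhath} together with the holomorphy condition \eqref{eqn:prp-jac:varrhomhath_bounded}, and by locating the relevant shadows. The natural base case is $N=1$: the series $\sHGH=\sHGH_1$ is the classical Hurwitz class number generating function, and its status as a holomorphic mock Jacobi form of weight $2$ and index $1$ for $\GammaOJ(1)$, with shadow a nonzero multiple of $t_1$, is part of the foundational theory of mock Jacobi forms (see \cite{MR781735} and \S~7.2 of \cite{Dabholkar:2012nd}). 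Since $\JJ_{2,1}(1)\subset\JJ_{2,1}(N)$ for every $N$, this disposes of the level-one contribution, and the real work is to treat general $N$.

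For general $N$ I would realize the half-integral-weight avatar $\check h(\tau)=\sum_{m\ge0}\HGH_N(-m)q^m$ of $\sHGH_N$ (cf.\ \eqref{eqn:prp-jac:checkh}) as the holomorphic part of a weight-$\tfrac32$ real-analytic Eisenstein series for $\Gamma_0(4N)$, following Zagier's computation in the level-one case. The generalized Hurwitz class numbers \eqref{eqn:prp-cln:HHurND} are weighted counts of $\Gamma_0(N)$-orbits of binary quadratic forms, hence are governed by local orbit densities whose product is exactly the Euler product producing the Fourier coefficients of such an Eisenstein series. Transporting the resulting transformation law back through the theta-correspondence of \S~\ref{sec:prp-jac} would yield \eqref{eqn:prp-jac:varrhomhath} for $\widehat{\sHGH_N}$ together with the boundedness \eqref{eqn:prp-jac:varrhomhath_bounded}, so that $\sHGH_N\in\JJ_{2,1}(N)$; the non-holomorphic Eichler-integral term appearing in this computation is, by construction, the period integral of the shadow. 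The shape of the shadow is then forced: by \eqref{eqn:prp-jac:xi_ses} we have $\xi(\sHGH_N)\in J^\sk_{1,1}(N)$, and since this is a space of weight-$1$ skew-holomorphic Jacobi forms it is of theta-type by the Serre--Stark theorem \cite{MR0472707}, and so is spanned by the $t_d$ with $d^2\mid N$ (cf.\ the discussion around \eqref{eqn:prp-fcs:t1}). Hence $\xi(\sHGH_N)$ is automatically such a linear combination, which establishes the first two assertions.

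For the claim about $\sHCE_N$ with $N$ prime I would combine the previous points with Lemma \ref{lem:cln-ces:ceshcn}, which gives $\sHCE_N=\sHGH-\tfrac12\sHGH_N$. When $N$ is prime the only $d$ with $d^2\mid N$ is $d=1$, so $J^\sk_{1,1}(N)=\CC\,t_1$, and the shadows of both $\sHGH$ and $\sHGH_N$ are multiples of $t_1$; applying the $\CC$-linear map $\xi$ shows $\xi(\sHCE_N)$ is a multiple of $t_1$ as well. To see that it vanishes I would invoke that $\sHCE_N$ is the Cohen--Eisenstein series of prime level, a genuine holomorphic weight-$\tfrac32$ Eisenstein series by Cohen \cite{MR0382192} and Gross \cite{MR894322}; equivalently, its coefficients $\HCE_N(D)$ carry no non-holomorphic completion. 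Thus $\widehat{\sHCE_N}=\sHCE_N$, i.e.\ $\sHCE_N\in J_{2,1}(N)$, and as a consistency check the relation then forces $\xi(\sHGH_N)=2\,\xi(\sHGH)$.

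I expect the main obstacle to be the level-$N$ transformation law for $\widehat{\sHGH_N}$, namely verifying \eqref{eqn:prp-jac:varrhomhath} over all of $\widetilde{\Gamma}_0(N)$ and the holomorphy at every cusp. This is precisely where the Eisenstein-series realization (or Zagier's argument, redone at level $4N$) must be carried out with care, since it is this step that simultaneously pins down the non-holomorphic completion and hence the precise shadow. An alternative that avoids a fresh Eisenstein computation would be to build $\sHGH_N$ from $\sHGH_1$ using the class-number identities generalizing \eqref{eqn:HsupNDHDHDprime} together with standard level-raising and Hecke operators on weight-$\tfrac32$ forms; on that route the crux becomes checking that these operators carry the level-one shadow into the span of the $t_d$ with $d^2\mid N$.
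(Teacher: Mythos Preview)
Your proposal is correct in outline and takes a more constructive route than the paper. The paper's proof is purely citational: it defers the $N=1$ case to Hirzebruch--Zagier \cite{MR0453649} (and Zagier \cite{MR0429750}), the general $N$ case to Funke \cite{MR1930980}, and the holomorphy of $\sHCE_N$ for prime $N$ to \S~12 of Gross \cite{MR894322}. You instead sketch how one would actually carry out the argument---realizing $\sHGH_N$ as the holomorphic part of a weight-$\tfrac32$ Eisenstein series at level $4N$ (which is precisely what the cited Funke result provides in a more general setting), and deducing the Cohen--Eisenstein statement from Lemma~\ref{lem:cln-ces:ceshcn} combined with Gross's work. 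Your soft argument for the shape of the shadow, via the exact sequence \eqref{eqn:prp-jac:xi_ses} and Serre--Stark, is a genuine alternative to computing the shadow explicitly: rather than identifying $\xi(\sHGH_N)$ directly, you observe that it is forced to lie in the theta-type space $J^\sk_{1,1}(N)$, which is spanned by the $t_d$ with $d^2\mid N$. This buys you the second assertion essentially for free once the first is established. The trade-off is that your route requires actually executing the level-$N$ Eisenstein computation (or the level-raising/Hecke alternative you mention), which you correctly flag as the main obstacle; the paper simply outsources this to Funke. One small caution: your claim that the $t_d$ with $d^2\mid N$ span all of $J^\sk_{1,1}(N)$ is stronger than what the paper asserts (which only handles the square-free case explicitly, cf.\ the footnote near \eqref{eqn:prp-fcs:t1}), but for the proposition you only need that the shadow of $\sHGH_N$---a form with trivial character---lands in their span, and Serre--Stark does give that.
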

\begin{proof}
The statements about $\sHGH_N$ for $N=1$ 
follow directly from the Corollary in \S~2.2 of \cite{MR0453649} (see also \cite{MR0429750}). 
For general $N$ we may apply the results of \cite{MR1930980}. 
The statement about $\sHCE_N$ 
for $N$ prime follows from the discussion in \S~12 of \cite{MR894322}.
\end{proof}

One consequence of Proposition \ref{pro:prp-fcs:msHNmock} is that 
$\xi(\sHGH_N)$ is a multiple of $t_1$ (\ref{eqn:prp-fcs:t1}) when $N$ is square-free.

Note that the 
definition of the shadow map $\xi$ in (\ref{eqn:prp-jac:xi_ses}) 
requires the choice of an overall constant,  
so in order to be completely concrete about the shadow of $\sHGH_N$ let us fix this choice 
by requiring that 
\begin{gather}\label{eqn:prp-jac:shadowmsH1}
	\xi(\sHGH_1) = \frac12t_1
\end{gather}
(cf.\ (\ref{eqn:prp-fcs:t1})). 
\footnote{Motivation for this particular choice will be more clear in our companion papers \cite{radhur,hurm23}.}
With this normalization (\ref{eqn:prp-jac:shadowmsH1}) in place we 
have 
\begin{gather}\label{eqn:prp-jac:shadowmsHN}
	\xi(\sHGH_N) = t_1
\end{gather}
for $N$ prime,
according to Lemma \ref{lem:cln-ces:ceshcn}.

Except for $\sHGH=\sHGH_1$ (cf.\ (\ref{eqn:prp-cln:msH1})) the $\sHGH_N$ are not optimal (cf.\ (\ref{eqn:prp-opt:optimal}--\ref{eqn:prp-opt:c-optimal})), but we may use them to define our main examples. 
For this we set
\begin{gather}\label{eqn:prp-fcs:msRN}
	\sHR_N
	:=\frac{12}{\phi(N)}\sum_{M|N}\mu\left(\frac NM\right)\frac{M}{\indo(M)}\sHGH_M,
\end{gather}
for $N$ a positive integer,
where $\phi(N)$ denotes the Euler totient function, $\mu(N)$ is the M\"obius function, and $\indo(N)$ is as in 
(\ref{eqn:prp-cln:HN0}). 
\begin{pro}\label{pro:prp-fcs:msRN_1opt}
For any positive integer $N$ the function $\sHR_N$ is a $1$-optimal element of $\JJ_{2,1}(N)$. 
\end{pro}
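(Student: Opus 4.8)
The plan is to verify, in turn, the three defining properties of a $1$-optimal element of $\JJ_{2,1}(N)$: membership in $\JJ_{2,1}(N)$, the normalization (\ref{eqn:prp-opt:c-optimal}) with $\opc=1$, and the optimality condition (\ref{eqn:prp-opt:optimal}). Membership is immediate: for each $M\mid N$ we have $\Gamma_0(N)\subseteq\Gamma_0(M)$ and hence $\GammaOJ(N)\subseteq\GammaOJ(M)$, so since the holomorphy condition (\ref{eqn:prp-jac:varrhomhath_bounded}) is phrased over all of $\widetilde{\SL}_2(\ZZ)$ and the $\widetilde{\Gamma}_0(M)$-invariance restricts to $\widetilde{\Gamma}_0(N)$-invariance, Proposition \ref{pro:prp-fcs:msHNmock} gives $\sHGH_M\in\JJ_{2,1}(M)\subseteq\JJ_{2,1}(N)$. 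As $\JJ_{2,1}(N)$ is a $\CC$-vector space, the linear combination (\ref{eqn:prp-fcs:msRN}) lies in it as well.

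For the normalization I would extract the constant-in-$q$ term of $\sHR_N$. By (\ref{eqn:prp-cln:msHN}) the coefficient of $q^0$ in $\sHGH_M$ is $\sum_s\HGH_M(s^2)y^s$, and since $\HGH_M(D)=0$ for $D>0$ only the $s=0$ term survives, contributing $\HGH_M(0)=-\tfrac1{12}\indo(M)$ by (\ref{eqn:prp-cln:HN0}). Substituting into (\ref{eqn:prp-fcs:msRN}), the factors $\indo(M)$ and $12$ cancel and the constant term of $\sHR_N$ becomes $-\tfrac1{\phi(N)}\sum_{M\mid N}\mu(\tfrac NM)M$. The Dirichlet convolution identity $\mu\ast\mathrm{id}=\phi$ gives $\sum_{M\mid N}\mu(\tfrac NM)M=\phi(N)$, so the constant term is exactly $-1$, as required by (\ref{eqn:prp-opt:c-optimal}) with $\opc=1$.

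The main work, and the principal obstacle, is the optimality condition (\ref{eqn:prp-opt:optimal}): that the completed theta-coefficients $\hat h$ of $\sHR_N$ tend to $0$ as $\Im(\tau)\to\infty$ after transforming to any non-infinite cusp of $\GammaOJ(N)$. Here I would first observe that for a weight $\tfrac32$ harmonic Maass form the non-holomorphic contribution to any constant term decays like $\tau_2^{-1/2}$, so the limit in (\ref{eqn:prp-opt:optimal}) is governed entirely by the holomorphic constant terms; in particular the shadow is handled automatically, and it suffices to show that the holomorphic constant term of $\sHR_N$ vanishes at each cusp of $\Gamma_0(N)$ other than $\infty$. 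The cusps of $\Gamma_0(N)$ are parametrized by the divisors $c\mid N$ (with the usual multiplicities), and the plan is to compute, for each such cusp, the holomorphic constant term of $\sHGH_M$. To obtain these I would use the relation $\sHGH=\sHCE_N+\tfrac12\sHGH_N$ of Lemma \ref{lem:cln-ces:ceshcn}, together with the explicit cusp expansions of the Cohen--Eisenstein series (following \S~12 of \cite{MR894322}) and the transformation law (\ref{eqn:prp-jac:varrhomhath}) under the Weil representation, equivalently the action of the Atkin--Lehner operators, so as to express each constant term as an explicit multiplicative local factor depending on the interaction of $M$ with the divisor $c$.

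With these constant terms in hand, the final step is to verify that the Möbius-weighted sum (\ref{eqn:prp-fcs:msRN}) annihilates the holomorphic constant term at every cusp of denominator $c<N$, leaving a nonzero value only at $\infty$. I expect this cancellation to be a cusp-by-cusp Euler-product/Möbius computation whose structure mirrors the normalization above; it is convenient to reorganize (\ref{eqn:prp-fcs:msRN}) via the equivalent Möbius-inverted identity $\tfrac{12N}{\indo(N)}\sHGH_N=\sum_{M\mid N}\phi(M)\sHR_M$, which supports an induction on $N$ in which the already-known optimality of $\sHR_M$ for proper divisors $M\mid N$ isolates the genuinely new contributions at level $N$. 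The essential difficulty lies precisely in this bookkeeping: tracking the holomorphic constant terms of the $\sHGH_M$ simultaneously across all divisors $M\mid N$ and all cusps, and confirming that the arithmetic of the Möbius weights forces the required vanishing.
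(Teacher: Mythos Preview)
Your verification that $\sHR_N\in\JJ_{2,1}(N)$ and that its constant term equals $-1$ is correct and matches the paper's argument, though you have usefully spelled out the M\"obius computation that the paper leaves implicit in the phrase ``follows from the construction.''

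For the optimality condition (\ref{eqn:prp-opt:optimal}) the paper takes an entirely different route: it simply invokes Proposition 5.2 of \cite{MR4291251}, where the required vanishing at non-infinite cusps is established (in the closely related weight-$\tfrac32$ language) via Rademacher-sum techniques. Your proposal instead attempts a direct computation of the holomorphic constant terms of the $\sHGH_M$ at all cusps of $\Gamma_0(N)$, followed by a M\"obius cancellation. This is a reasonable strategy in spirit, but as written it has a concrete gap: you propose to access the cusp expansions of $\sHGH_M$ through the relation $\sHGH=\sHCE_N+\tfrac12\sHGH_N$ of Lemma \ref{lem:cln-ces:ceshcn} and the explicit theory of the Cohen--Eisenstein series $\sHCE_N$, but in this paper $\sHCE_N$ is only defined and Lemma \ref{lem:cln-ces:ceshcn} only proved for $N$ prime, whereas the proposition is stated for arbitrary positive integers $N$. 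For composite $N$ you would need the cusp constant terms of $\sHGH_M$ for all divisors $M\mid N$, and the prime-level Cohen--Eisenstein machinery does not supply these. Even granting suitable generalizations, your step 3 remains a plan rather than a proof: you acknowledge that ``the essential difficulty lies precisely in this bookkeeping'' without actually performing it, and the proposed induction via the inverted identity $\tfrac{12N}{\indo(N)}\sHGH_N=\sum_{M\mid N}\phi(M)\sHR_M$ presupposes knowledge of the cusp behavior of $\sHGH_N$ itself, which is what you are trying to determine. To complete the argument along your lines you would need either a direct computation of the constant terms of $\sHGH_M$ at all cusps for general $M$ (as in \cite{MR1930980}, which underlies Proposition \ref{pro:prp-fcs:msHNmock}), or to follow the paper and appeal to \cite{MR4291251}.
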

\begin{proof}
The statement that $\sHR_N$ belongs to $\JJ_{2,1}(N)$ follows from the construction (\ref{eqn:prp-fcs:msRN}), and Proposition \ref{pro:prp-fcs:msHNmock}. 
The statement that
\begin{gather}\label{eqn:prp-fcs:msRN_1opt}
	\sHR_N(\tau,z)=-1+O(q)
\end{gather}
as $\Im(\tau)\to \infty$, 
for any fixed $z$,
also follows from the construction. 
Thus $\sHR_N$ is $1$-optimal (cf.\ (\ref{eqn:prp-opt:c-optimal})) if is optimal (\ref{eqn:prp-opt:optimal}). The optimality of $\sHR_N$ follows from Proposition 5.2 of \cite{MR4291251}.
\end{proof}

\begin{rmk}
To motivate the notation in (\ref{eqn:prp-fcs:msRN}) we mention that the fact that $\sHR_N$ is optimal means that it may be expressed as a Rademacher sum. We refer to \S~2 of \cite{MR4291251} for more detail on this.
\end{rmk}

Note that the Fourier coefficients of $\sHR_N$ are rational numbers that, except when $N=1$, are generally not integers. 
Also, $\JJ_{2,1}(1)$ is spanned by $\sHR_1=12\sHGH$ (cf.\ (\ref{eqn:prp-cln:msH1}) and see Proposition \ref{pro:prp-fcs:JJ21NJ21N}). So for any finite group $\sG$, 
a $\opc$-optimal 
virtual graded $\sG$-module 
$W=\bigoplus_D W_D$ 
of weight $2$ and index $1$ 
(cf.\ (\ref{eqn:prp-fcs:W}--\ref{eqn:prp-fcs:phiWsg}))
satisfies
\begin{gather}\label{eqn:prp-fcs:phiWse}
	\begin{split}
	\phi^W_\se(\tau,z) &= \sum_{n,s\in\ZZ} \dim(W_{s^2-4n})q^ny^s
	\\
	 &
	 = \opc\sHR_1(\tau,z)\\
	 &= 12\opc\sHGH(\tau,z),
	\end{split}
\end{gather}
where $\se$ denotes the identity element of $\sG$. 
In particular, it follows from (\ref{eqn:prp-fcs:phiWse}) that $\phi^W_\se$ vanishes identically if $W\in\mc{L}^\opt_{2,1}(\sG)$ (cf.\ (\ref{eqn:prp-opt:langleWWprangle})). 
Thus the embedding (\ref{eqn:prp-opt:mcWsGtoLkmsG}) refines to a map
\begin{gather}\label{eqn:prp-fcs:mcLsG}
	\mc{L}^\opt_{2,1}(\sG)\to L_{2,1}(\sG)_0
\end{gather}
in the situation at hand, where $L_{2,1}(\sG)_0$ 
(cf.\ (\ref{eqn:prp-opt:LkmsG}))
denotes the kernel of the map $\lambda\mapsto \lambda_\se$ 
(cf.\ (\ref{eqn:prp-opt:lambdasg})).
In other words, 
\begin{gather}\label{eqn:prp-fcs:L21sG0}
	L_{2,1}(\sG)_0:=S_{2,1}(\#\sG)\otimes R(\sG)_0,
\end{gather}
where $R(\sG)_0$ is composed of the virtual modules 
$V\in R(\sG)$ (see (\ref{eqn:prp-opt:RsG})) such that $\tr(\se|V)=0$ (cf.\ (\ref{eqn:prp-opt:trsgV})).

In the sequel \S~\ref{sec:res} we will 
focus on the case that $\sG=\sZNZ$ is a cyclic group of prime order.
Thus, in addition to (\ref{eqn:prp-fcs:phiWse}) we are interested in the optimal level $N$ forms in $\JJ_{2,1}(N)$, for $N$ prime. 
To get a sense for how these forms look we first note the following two results.
\begin{pro}\label{pro:prp-fcs:JJ21NJ21N}
The space $\JJ_{2,1}(1)$ is spanned by $\sHGH$. More generally, $\JJ_{2,1}(N)/ J_{2,1}(N)$ is spanned by the image of $\sHGH$ when $N$ is square-free.
\end{pro}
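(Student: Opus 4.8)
The plan is to read both assertions off the shadow exact sequence (\ref{eqn:prp-jac:xi_ses}), specialized to weight $k = 2$ and index $m = 1$:
\[
0 \longrightarrow J_{2,1}(N) \longrightarrow \JJ_{2,1}(N) \xrightarrow{\ \xi\ } J^{\sk}_{1,1}(N).
\]
Since $J_{2,1}(N) = \ker \xi$, this identifies $\JJ_{2,1}(N)/J_{2,1}(N)$ with the image $\xi(\JJ_{2,1}(N)) \subseteq J^{\sk}_{1,1}(N)$. The decisive input is the fact recorded just after (\ref{eqn:prp-fcs:t1}) that $t_1$ spans $J^{\sk}_{1,1}(N)$ whenever $N$ is square-free; for such $N$ the target is one-dimensional, so the quotient $\JJ_{2,1}(N)/J_{2,1}(N)$ is at most one-dimensional.

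Next I would produce a class realizing this dimension. By Proposition \ref{pro:prp-fcs:msHNmock} we have $\sHGH = \sHGH_1 \in \JJ_{2,1}(1)$, and hence $\sHGH \in \JJ_{2,1}(N)$ for every $N$, because $\Gamma_0(N) \subseteq \SL_2(\ZZ)$ gives $\GammaOJ(N) \subseteq \GammaOJ(1)$, so invariance (and holomorphy) pass to the subgroup. The shadow of $\sHGH$ is a nonzero multiple of $t_1$, by Proposition \ref{pro:prp-fcs:msHNmock} and the normalization (\ref{eqn:prp-jac:shadowmsH1}) (the shadow is computed from the completion of $\sHGH$ and so is insensitive to the level at which the form is regarded). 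In particular $\xi(\sHGH) \neq 0$, so $\sHGH \notin \ker \xi = J_{2,1}(N)$ and the image of $\sHGH$ in $\JJ_{2,1}(N)/J_{2,1}(N)$ is nonzero. Together with the previous paragraph this forces the quotient to be exactly one-dimensional and spanned by the image of $\sHGH$, which is the second assertion.

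For the first assertion it remains, since $N = 1$ is square-free, to combine the quotient statement at $N=1$ with the vanishing $J_{2,1}(1) = 0$. I would derive the latter from the Eichler--Zagier structure theory of \cite{MR781735}: the weak Jacobi forms of index $1$ form the free module over $M_\ast = \CC[E_4, E_6]$ generated by the standard forms $\phi_{-2,1}$ and $\phi_{0,1}$ of weights $-2$ and $0$, so in weight $2$ they reduce to $M_4 \cdot \phi_{-2,1} \oplus M_2 \cdot \phi_{0,1}$. As $M_2(\SL_2(\ZZ)) = 0$ and $M_4 = \CC E_4$, every weak form of weight $2$ and index $1$ is a multiple of $E_4 \phi_{-2,1}$; but the $q^0 y^{\pm 1}$ Fourier coefficient of $E_4 \phi_{-2,1}$ equals $1$, which has discriminant $D = 1 > 0$ and so violates the holomorphy constraint $C_\phi(D) = 0$ for $D > 0$ (cf.\ the discussion after (\ref{eqn:prp-fcs:phi})). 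Hence $J_{2,1}(1) = 0$, and the $N=1$ case of the quotient statement then reads $\JJ_{2,1}(1) = \JJ_{2,1}(1)/J_{2,1}(1) = \CC\sHGH$.

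The main obstacle is precisely the vanishing $J_{2,1}(1) = 0$. The square-free quotient statement is essentially formal once one has the one-dimensionality of $J^{\sk}_{1,1}(N)$ together with the non-vanishing of $\xi(\sHGH)$, whereas identifying the full space $\JJ_{2,1}(1)$ rather than merely a quotient requires the separate (classical) input that there are no nonzero holomorphic Jacobi forms of weight $2$ and index $1$ on the full modular group. Care is needed to apply the structure theorem to the weak forms first and only then impose holomorphy, so as to rule out the single candidate $E_4 \phi_{-2,1}$.
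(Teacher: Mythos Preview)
Your proof is correct and follows essentially the same route as the paper: both arguments use the shadow exact sequence (\ref{eqn:prp-jac:xi_ses}) together with the one-dimensionality of $J^{\sk}_{1,1}(N)$ for square-free $N$ and the non-vanishing of $\xi(\sHGH)$ to handle the quotient, and then invoke $J_{2,1}(1)=0$ for the first assertion. The only difference is that the paper simply cites Theorem~3.5 of \cite{MR781735} for the vanishing $J_{2,1}(1)=0$, whereas you unpack this by applying the Eichler--Zagier structure theorem for weak Jacobi forms and ruling out $E_4\phi_{-2,1}$ by hand; this is a harmless and instructive elaboration rather than a genuinely different approach.
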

\begin{proof}
As mentioned in \S~\ref{sec:prp-fcs} (cf.\ (\ref{eqn:prp-fcs:t1})), the methods of \cite{MR2512363} may be used to check that $J^\sk_{1,1}(N)$ is spanned by $t_1$ (see (\ref{eqn:prp-fcs:t1})) when $N$ is square-free. 
Thus the second statement follows from the $k=2$ and $m=1$ case of the exact sequence (\ref{eqn:prp-jac:xi_ses}). 
For the first statement we take $N=1$ in the second statement, and note that $J_{2,1}(1)=\{0\}$ according to Theorem 3.5 in \cite{MR781735}.
\end{proof}
\begin{pro}\label{pro:prp-fcs:JJ21NS21N}
If $N$ is prime then $\JJ_{2,1}(N)/S_{2,1}(N)$ is spanned by the images of $\sHGH$ and $\sHGH_N$.
\end{pro}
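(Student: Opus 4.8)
The plan is to analyse the two-step filtration $S_{2,1}(N)\subseteq J_{2,1}(N)\subseteq \JJ_{2,1}(N)$ and to show that each of the two successive quotients is one-dimensional, with $\sHGH_N$ and $\sHCE_N$ serving as representatives, before converting to the asserted generators $\sHGH=\sHGH_1$ and $\sHGH_N$ (both of which lie in $\JJ_{2,1}(N)$, the former because level $1$ forms are level $N$ forms, the latter by Proposition \ref{pro:prp-fcs:msHNmock}) by means of Lemma \ref{lem:cln-ces:ceshcn}.

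First I would treat the upper quotient $\JJ_{2,1}(N)/J_{2,1}(N)$. The exact sequence (\ref{eqn:prp-jac:xi_ses}), specialized to $k=2$ and $m=1$, identifies this quotient with the image of the shadow map $\xi$ inside $J^\sk_{1,1}(N)$. Since $N$ is prime, hence square-free, the space $J^\sk_{1,1}(N)$ is spanned by $t_1$ (cf.\ (\ref{eqn:prp-fcs:t1}) and the proof of Proposition \ref{pro:prp-fcs:JJ21NJ21N}), so it is one-dimensional; and because $\xi(\sHGH_N)=t_1$ by (\ref{eqn:prp-jac:shadowmsHN}), the map $\xi$ is surjective. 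Hence $\JJ_{2,1}(N)/J_{2,1}(N)$ is one-dimensional and is spanned by the image of $\sHGH_N$; equivalently, every $\phi\in\JJ_{2,1}(N)$ satisfies $\phi-c\sHGH_N\in J_{2,1}(N)$ for a unique scalar $c$.

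Next I would treat the lower quotient $J_{2,1}(N)/S_{2,1}(N)$, which is the Eisenstein subspace of the holomorphic Jacobi forms. Here $\sHCE_N$ is a non-cuspidal member of $J_{2,1}(N)$: it lies in $J_{2,1}(N)$ by Proposition \ref{pro:prp-fcs:msHNmock}, and it is not cuspidal since its constant term $\HCE_N(0)=\tfrac{N-1}{24}$ is nonzero for $N$ prime. Thus the quotient is at least one-dimensional, and the crux of the argument is the reverse inequality, that it is at most one-dimensional. For this I would exploit that, the index being $1$ (hence not composite), the assignment $\phi\mapsto\check h$ of (\ref{eqn:prp-jac:checkh_indexone}) is injective on $J_{2,1}(N)$ and carries $S_{2,1}(N)$ into cusp forms; combined with the Eichler--Zagier/Skoruppa--Zagier correspondence between index-$1$ weight-$2$ Jacobi forms and weight-$2$ modular forms on $\Gamma_0(N)$ (cf.\ \cite{MR781735}, and \cite{MR894322} for the treatment of the Cohen--Eisenstein part at prime level), this reduces the claim to the standard fact that the weight-$2$ Eisenstein subspace of $\Gamma_0(N)$ is one-dimensional when $N$ is prime (there being two cusps and a single residue relation, the space being spanned by $E_2(\tau)-NE_2(N\tau)$). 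This step---pinning down that the Eisenstein subspace is exactly one-dimensional rather than two-dimensional---is the main obstacle, since for integer weight $k\ge 4$ the corresponding space at prime level is two-dimensional, and the drop to dimension one is special to weight $2$; care is needed to ensure the correspondence does not introduce extra Eisenstein classes through the plus-space.

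Finally I would assemble the pieces. The two quotients being one-dimensional shows $\dim \JJ_{2,1}(N)/S_{2,1}(N)=2$, with $\{\sHCE_N,\sHGH_N\}$ mapping to a basis (the image of $\sHGH_N$ generating the top graded piece $\JJ_{2,1}(N)/J_{2,1}(N)$ and the image of $\sHCE_N$ generating the bottom piece $J_{2,1}(N)/S_{2,1}(N)$). By Lemma \ref{lem:cln-ces:ceshcn} we have $\sHGH=\sHCE_N+\tfrac12\sHGH_N$, so passing from $\{\sHCE_N,\sHGH_N\}$ to $\{\sHGH,\sHGH_N\}$ is effected by a unipotent, hence invertible, change of basis. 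Consequently the images of $\sHGH$ and $\sHGH_N$ also form a basis of $\JJ_{2,1}(N)/S_{2,1}(N)$, which is precisely the assertion.
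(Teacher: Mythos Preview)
Your proposal is correct and follows essentially the same route as the paper: both arguments split the quotient via the filtration $S_{2,1}(N)\subseteq J_{2,1}(N)\subseteq \JJ_{2,1}(N)$, use the shadow map together with $\dim J^\sk_{1,1}(N)=1$ for the top piece, invoke the Hecke-equivariant correspondence $J_{2,1}(N)\cong M_2(\Gamma_0(N))$ and the one-dimensionality of the weight-$2$ Eisenstein space at prime level for the bottom piece, and finish with Lemma~\ref{lem:cln-ces:ceshcn}. The only cosmetic difference is that the paper takes $\sHGH$ as the representative for the mock quotient (via Proposition~\ref{pro:prp-fcs:JJ21NJ21N}) and then trades $\sHCE_N$ for $\sHGH_N$, whereas you take $\sHGH_N$ for the mock quotient (via $\xi(\sHGH_N)=t_1$) and trade $\sHCE_N$ for $\sHGH$; the two unipotent changes of basis are inverse to one another.
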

\begin{proof}
It follows from the methods of \cite{MR660784} and \cite{MR1247596} (see also \cite{MR819403} and \cite{MR1637086}) that $J_{2,1}(N)$ is isomorphic, 
as a module for the Hecke algebra ${\bf T}_N$ of level $N$,
to the space 
of holomorphic modular forms of weight $2$ for $\Gamma_0(N)$ for $N$ prime. 

When $N$ is prime there is a unique (modular) Eisenstein series of weight $2$ for $\Gamma_0(N)$, and it may be checked that  
it has the same Hecke eigenvalues as 
the Cohen--Eisenstein series $\sHCE_N$. Thus $\JJ_{2,1}(N)/S_{2,1}(N)$ is spanned by the images of $\sHGH$ and $\sHCE_N$ according to Proposition \ref{pro:prp-fcs:JJ21NJ21N}, and the claimed result then follows from Lemma 
\ref{lem:cln-ces:ceshcn}. 
\end{proof}

Suppose now that $W\in \mc{W}^\opt_{k,m}(\sG)_\opc$ is a $\opc$-optimal module for $\sG=\ZZ/N\ZZ$ for $N$ prime. Then, on the strength of 
Proposition \ref{pro:prp-fcs:JJ21NS21N},
for a non-identity element $\sg\in \sG$ we must have
\begin{gather}
\begin{split}\label{eqn:prp-fcs:phiWsg-spec}
	\phi^W_\sg(\tau,z) 
	&= \sum_{n,s\in\ZZ} \tr(\sg|W_{s^2-4n})q^ny^s
	\\
	&
	=\opc\sHR_{N}(\tau,z)+\varphi_N(\tau,z)\\
	&=\frac{N}{N^2-1}12\opc\sHGH_N(\tau,z) - \frac{1}{N-1}12\opc\sHGH(\tau,z)+\varphi_N(\tau,z),
\end{split}
\end{gather}
(cf.\ (\ref{eqn:prp-fcs:msRN})) for some cuspidal form $\varphi_N\in S_{2,1}(N)$. 
Thus the problem of computing $\opc^\opt_{2,1}(\sG)$ (see (\ref{eqn:prp-opt:opcopt}))
boils down to the problem of finding the minimal positive integer $\opc$ such that 
\begin{gather}\label{eqn:prp-fcs:opcopt21}
\frac{N}{N^2-1}12\opc\sHGH_N(\tau,z) - \frac{1}{N-1}12\opc\sHGH(\tau,z)+\varphi_N(\tau,z)
\end{gather}
belongs to $\JJ_{2,1}(N)_\ZZ$ (see (\ref{eqn:prp-jac:JJkmGamma0JNZZ})) for some $\varphi_N\in S_{2,1}(N)$.

With the preceding as motivation we conclude this section by singling out the subspace 
\begin{gather}\label{eqn:prp-fcs:JEiskm}
	\JJ^\Eis_{2,1}(N) := \Span\left\{ \left.\sHGH_M\,\right| \text{$M$ divides $N$} \right\}
\end{gather}
of $\JJ_{2,1}(N)$, 
which we refer to as the space of {\em Eisenstein series} in $\JJ_{2,1}(N)$. 
According to Proposition \ref{pro:prp-fcs:JJ21NS21N}
we have that $\JJ^\Eis_{2,1}(N)$ is a complement 
to the subspace 
of cuspidal Jacobi forms in $\JJ_{2,1}(N)$, so that
\begin{gather}\label{eqn:prp-fcs:Eiscsp_decomp}
	\JJ_{2,1}(N) = \JJ^\Eis_{2,1}(N)\oplus S_{2,1}(N),
\end{gather}
at least when $N$ is prime.

\section{Results}\label{sec:res}

In this section we prove our main results. 
The first of these is a 
solution to the classification problem formulated in \S~\ref{sec:prp-opt} 
(cf.\ Proposition \ref{pro:prp-opt:mcWG})
in the setting described in \S~\ref{sec:prp-fcs}, for the cyclic groups of prime order. 
We establish this classification in 
\S~\ref{sec:res-opt}.
Then, in \S~\ref{sec:res-art}, we formulate consequences of this classification for the arithmetic of imaginary quadratic twists of modular abelian varieties.

\subsection{Modules}\label{sec:res-opt}

Here we solve the optimal module classification problem formulated in \S~\ref{sec:prp-opt} 
(cf.\ Proposition \ref{pro:prp-opt:mcWG}), 
in the setting described in \S~\ref{sec:prp-fcs}, for the cyclic groups of prime order. 
That is, we determine 
$\mc{L}^\opt_{2,1}(\sG)$ (cf.\ (\ref{eqn:prp-opt:langleWWprangle}), (\ref{eqn:prp-fcs:mcLsG})) 
and 
$\opc^\opt_{2,1}(\sG)$ (cf.\ (\ref{eqn:prp-opt:opcopt}), (\ref{eqn:prp-fcs:opcopt21})) 
precisely,
where here---and from here on---$\sG$ denotes $\ZZ/N\ZZ$ and $N$ denotes a prime. 
To formulate the result we define
\begin{gather}\label{eqn:res-opt:opceop}
	\opc^\eop(N):=
	\begin{cases}
	1&\text{if $N=2$,}\\
	2&\text{if $N=3$,}\\
	\frac{N^2-1}{24}&\text{if $N\equiv 1\xmod 4$,}\\
	\frac{N^2-1}{12}&\text{if $N\equiv 3\xmod 4$ and $N>3$,}
	\end{cases}
\end{gather}
we write $J_0(N)$ for the Jacobian of the modular curve $X_0(N)$ defined by $\Gamma_0(N)$ (\ref{eqn:prp-cln:Gamma0N}), and write $J_0(N)(\QQ)_\tor$ for the torsion subgroup of the group $J_0(N)(\QQ)$ of $\QQ$-rational points on $J_0(N)$. 
We also recall the embedding (\ref{eqn:prp-fcs:mcLsG}), and use it to regard $\mc{L}^\opt_{2,1}(\sG)$ 
as a subset of $L_{2,1}(\sG)_0$ (see (\ref{eqn:prp-fcs:L21sG0})).

\begin{thm}\label{thm:res-opt:tor}
For $N$ prime and $\sG=\sZNZ$ we have 
$\mc{L}^\opt_{2,1}(\sG)=
L_{2,1}(\sG)_0$ 
and
\begin{gather}\label{eqn:res-opt:tor}
\opc^\opt_{2,1}(\sG)=\frac{\opc^\eop(N)}{\#J_0(N)(\QQ)_\tor}.
\end{gather}
\end{thm}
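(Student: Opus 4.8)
I would prove the two assertions separately, starting with the lattice identity. The embedding (\ref{eqn:prp-fcs:mcLsG}) is injective and, by the definition (\ref{eqn:prp-opt:langleWWprangle}) of the form on $\mc{L}^\opt_{2,1}(\sG)$, automatically form-preserving, so it remains only to prove surjectivity onto $L_{2,1}(\sG)_0$. Given $\lambda=\sum_{i,\chi}n_{i,\chi}\varphi_i\otimes\chi\in L_{2,1}(\sG)_0$ with $\{\varphi_i\}$ a $\ZZ$-basis of $S_{2,1}(N)_\ZZ$ and all $n_{i,\chi}\in\ZZ$, I would set $W_D:=\sum_{i,\chi}n_{i,\chi}C_{\varphi_i}(D)\chi$; since the $C_{\varphi_i}(D)$ are integers this defines a genuine virtual graded $\sG$-module $W=\bigoplus_D W_D$ with $\phi^W_\sg=\lambda_\sg$ for every $\sg$. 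Then $\phi^W_\se=\lambda_\se=0$, while $\phi^W_\sg=\lambda_\sg\in S_{2,1}(N)$ is cuspidal for $\sg\neq\se$, so $W$ is $0$-optimal once the level of each $\phi^W_\sg$ is checked to be exactly $o(\sg)$. This is where primality of $N$ enters: for $\sg\neq\se$ the values $\chi(\sg)$ are primitive $N$-th roots of unity, whose minimal polynomial has degree $N-1$, so $\lambda_\sg=0$ would force each $\sum_\chi n_{i,\chi}\chi$ to be a constant multiple of the regular character, and then $\lambda_\se=0$ forces $\lambda=0$; since $S_{2,1}(1)=\{0\}$ (Theorem 3.5 of \cite{MR781735}), each $\phi^W_\sg$ with $\sg\neq\se$ has level exactly $N$ whenever $\lambda\neq0$. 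Thus the constructed $W$ lies in $\mc{L}^\opt_{2,1}(\sG)$ and maps to $\lambda$, giving $\mc{L}^\opt_{2,1}(\sG)=L_{2,1}(\sG)_0$.

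For the constant I would use the reduction (\ref{eqn:prp-fcs:opcopt21}): $\opc^\opt_{2,1}(\sG)$ is the least $\opc>0$ with $\opc\sHR_N+\varphi_N\in\JJ_{2,1}(N)_\ZZ$ for some $\varphi_N\in S_{2,1}(N)$. Writing $\pi$ for projection onto the Eisenstein summand of (\ref{eqn:prp-fcs:Eiscsp_decomp}), and using $\pi(\sHR_N)=\sHR_N$ and $\pi(S_{2,1}(N))=0$, this says $\opc\sHR_N\in\pi(\JJ_{2,1}(N)_\ZZ)$, so $\opc^\opt_{2,1}(\sG)$ is the order of $\sHR_N$ in $\JJ^\Eis_{2,1}(N)/\pi(\JJ_{2,1}(N)_\ZZ)$. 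I would first compute the companion \emph{pure-Eisenstein} denominator $\opc^\eop(N)$, the least $\opc>0$ with $\opc\sHR_N\in\JJ_{2,1}(N)_\ZZ$, i.e.\ the order of $\sHR_N$ in $\JJ^\Eis_{2,1}(N)/(\JJ^\Eis_{2,1}(N)\cap\JJ_{2,1}(N)_\ZZ)$. Expanding $\sHR_N=-\tfrac{12}{N-1}\sHGH+\tfrac{12N}{N^2-1}\sHGH_N$, noting the constant term is $-1$, and invoking the integrality statements $12\HGH(D)\in\ZZ$ and $\dH_N\HGH_N(D)\in\ZZ$ of Lemma \ref{lem:cln-ces:cNHNDdNHCND}, a discriminant-by-discriminant control of denominators—tracking the exceptional classes $D=-3e^2,-4e^2$ and the value of $\left(\tfrac{-4}{N}\right)$, which separates $N\equiv1$ from $N\equiv3\bmod4$—yields the case-by-case formula (\ref{eqn:res-opt:opceop}).

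The crux is to show that enlarging $\JJ^\Eis_{2,1}(N)\cap\JJ_{2,1}(N)_\ZZ$ to $\pi(\JJ_{2,1}(N)_\ZZ)$ lowers the order of $\sHR_N$ by exactly $\#J_0(N)(\QQ)_\tor$. The quotient $\pi(\JJ_{2,1}(N)_\ZZ)/\bigl(\JJ^\Eis_{2,1}(N)\cap\JJ_{2,1}(N)_\ZZ\bigr)$ is the Eisenstein congruence module: a class is the Eisenstein part of an integral form $\opc\sHR_N+\varphi_N$, whose cuspidal part $\varphi_N$ is congruent to the Eisenstein series modulo $\ZZ$. Through the Hecke-equivariant identification of $S_{2,1}(N)$ and $J_{2,1}(N)$ with weight-$2$ forms for $\Gamma_0(N)$ (Proposition \ref{pro:prp-fcs:JJ21NS21N}), under which $\sHCE_N$ matches the weight-$2$ Eisenstein series, this becomes the classical problem of congruences between Eisenstein series and cusp forms of prime level, controlled by Mazur's Eisenstein ideal. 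The congruence module is cyclic of order $\num\bigl(\tfrac{N-1}{12}\bigr)$, which by the theorem of Ogg and Mazur equals $\#J_0(N)(\QQ)_\tor$, and $\sHR_N$ generates it; hence $\opc^\eop(N)/\opc^\opt_{2,1}(\sG)=\#J_0(N)(\QQ)_\tor$.

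Finally, the explicit cusp form realizing the maximal congruence is the content of Lemma \ref{lem:res-opt:cuspformcong}, which I would build from the theta series $\vartheta_i$ of the trace-zero orders $S_i^0=\ZZ+2R_i$, where the $R_i=\End(E_i)$ are the endomorphism rings of the supersingular curves $E_i$ in characteristic $N$—maximal orders in the quaternion algebra $\mc{B}$ ramified at $N$ and $\infty$—reading off the congruence from the Eisenstein element $x_{\rm E}$ of the supersingular module $\mc{X}$, in the style of Gross \cite{MR894322}. Combining the pure-Eisenstein computation with this cuspidal reduction gives $\opc^\opt_{2,1}(\sG)=\opc^\eop(N)/\#J_0(N)(\QQ)_\tor$, and a direct check confirms that this ratio equals $\num\bigl(\tfrac{N+1}{6}\bigr)$. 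The main obstacle is the sharpness of the Eisenstein congruence—that cusp forms lower the denominator by the full order $\#J_0(N)(\QQ)_\tor$ and no further—which relies on Mazur's results together with the supersingular construction of Lemma \ref{lem:res-opt:cuspformcong}.
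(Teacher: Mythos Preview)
Your treatment of the lattice identity $\mc{L}^\opt_{2,1}(\sG)=L_{2,1}(\sG)_0$ is essentially the paper's own argument, with the welcome addition of a clean justification that each $\phi^W_\sg$ for $\sg\neq\se$ genuinely has level $N$ (via the $\ZZ$-linear independence of $1,\zeta,\ldots,\zeta^{N-2}$ for $\zeta$ a primitive $N$-th root of unity).

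For $\opc^\opt_{2,1}(\sG)$ you take a genuinely different route. The paper proceeds by explicit construction: it writes down the candidate traces $\ms{H}^{\sZNZ}_\sg$ of (\ref{eqn:res-opt:msHZNZsg}), checks integrality and the mod-$N$ congruence directly (using Lemma~\ref{lem:res-opt:cuspformcong} and Proposition~\ref{pro:res-opt:torEis}), and then establishes the lower bound $\opc^\opt_{2,1}(\sG)\geq\nH_N$ by exhibiting coprime Fourier coefficients, which ultimately rests on Wiles' theorem on class groups \cite{MR3404031}. Your approach instead packages the problem as computing the order of $\sHR_N$ in an Eisenstein-congruence quotient, appealing to Mazur's Eisenstein ideal for the sharpness.

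There are, however, real gaps in your outline. First, your reduction ``$\opc^\opt_{2,1}(\sG)$ is the least $\opc$ with $\opc\sHR_N+\varphi_N\in\JJ_{2,1}(N)_\ZZ$'' is not justified: the existence of a virtual $\sZNZ$-module requires more than integrality of one trace function---one needs the compatibility $\phi^W_\se\equiv\phi^W_\sg\pmod N$ (and that traces at distinct non-identity elements are Galois-conjugate). The paper checks this explicitly; you do not address it. Second, your computation of $\opc^\eop(N)$ handles only the upper bound. The lower bound---that the denominator of $\sHR_N$ is no smaller than (\ref{eqn:res-opt:opceop})---is what the paper extracts from Wiles' theorem, and your ``discriminant-by-discriminant control'' does not supply it. Third, your sharpness claim via Mazur is plausible but underspecified: Mazur's results control the index of the Eisenstein ideal in the weight-$2$ Hecke algebra, and translating this into the precise statement that no cuspidal correction can reduce the Fourier-coefficient denominator of $\sHR_N$ below $\nH_N$ requires an argument you have not given. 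In effect, both of your appeals to outside theorems (for $\opc^\eop(N)$ exactly and for sharpness) stand in for the single Wiles-based coprimality argument that the paper uses to pin down the lower bound.
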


It may not be 
clear from (\ref{eqn:res-opt:tor}) that $\opc^\opt_{2,1}(\sG)$ is a rational integer. For $\alpha$ a positive rational number let $\num(\alpha)$ denote the numerator of $\alpha$, when expressed in reduced form, and define
\begin{gather}\label{eqn:res-opt:nHNnCN}
	\nH_N:=\num\left(\frac{N+1}{6}\right),\quad
	\nC_N:=\num\left(\frac{N-1}{12}\right)
\end{gather}
(cf.\ (\ref{eqn:prp-cln:dHNdCN})). 
Then we have 
$\opc^\eop(N)=\nH_N\nC_N$. 
Also, we have 
$\#J_0(N)(\QQ)_\tor=\nC_N$ 
for prime $N$, according to a result of Mazur \cite{MR488287}. 
(Note that $\nC_N$ is denoted $n_N$ in op.\ cit.)
So 
the identity (\ref{eqn:res-opt:tor}) may be reformulated as the statement that
\begin{gather}\label{eqn:res-opt:opcoptnHN}
\opc^\opt_{2,1}(\sG)=\nH_N=\num\left(\frac{N+1}6\right),
\end{gather}
for $\sG=\ZZ/N\ZZ$.

Note also that $S_{2,1}(N)$ is naturally isomorphic to the space $S_2(N)$ of cuspidal modular forms of weight $2$ for $\Gamma_0(N)$, when $N$ is prime, according to \cite{MR1637086} (and this can also be seen using \cite{MR660784}). 
Thus we may express our computation of $\mc{L}^\opt_{2,1}(\sG)$, for $\sG=\ZZ/N\ZZ$ for $N$ prime, by writing
\begin{gather}\label{eqn:res-opt:mcLoptS2NRG0}
	\mc{L}^\opt_{2,1}(\sG) = L_{2,1}(\sG)_0 = S_2(N)\otimes R(\sG)_0.
\end{gather}

As preparation for the proof of Theorem \ref{thm:res-opt:tor} we first consider the analogous situation wherein the graded trace functions (\ref{eqn:prp-opt:phiWsg}--\ref{eqn:prp-opt:hWsgr}) arising are restricted to be Eisenstein series, as defined in (\ref{eqn:prp-fcs:JEiskm}) (cf.\ (\ref{eqn:prp-fcs:Eiscsp_decomp})).
In fact this situation is very constrained, because according to 
Propositions 
\ref{pro:prp-fcs:msRN_1opt}
and 
\ref{pro:prp-fcs:JJ21NS21N} 
the unique-up-to-scale optimal Eisenstein series for $\GammaOJ(N)$ is $\sHR_N$ (\ref{eqn:prp-fcs:msRN}).
So the task at hand is to determine the minimal positive integer $\opc^\eop(\sG)$ 
for which there exists a virtual graded $\sG$-module $W=\bigoplus_D W_D$ (\ref{eqn:prp-fcs:W}) 
of weight $2$ and index $1$ 
such that 
\begin{gather}\label{eqn:res-opt:opceopG}
\phi^W_\sg=\opc^\eop(\sG)\sHR_{o(\sg)}
\end{gather} 
(cf.\ (\ref{eqn:prp-fcs:phiWse}--\ref{eqn:prp-fcs:phiWsg-spec})).
Given our choice of notation in (\ref{eqn:res-opt:opceop}),
the reader may expect, or hope, that
$\opc^\eop(\sG)=\opc^\eop(N)$. We confirm this identity next.

\begin{pro}\label{pro:res-opt:torEis}
For $N$ prime and $\sG=\ZZ/N\ZZ$ we have $\opc^\eop(\sG)=\opc^\eop(N)$. 
\end{pro}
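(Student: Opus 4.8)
The plan is to convert the existence of the module into divisibility conditions on $\opc$, to show that only one of them is genuine, and then to read off the minimal $\opc$ from the explicit shape of $\sHR_N$. First I would fix a generator $g$ of $\sG=\sZNZ$ and note that, as $N$ is prime, every non-identity element has order $N$; thus (\ref{eqn:res-opt:opceopG}) asks for a virtual graded module $W=\bigoplus_D W_D$ with $\tr(\se|W_D)=a_D:=12\opc\,\HGH(D)$ and $\tr(g^k|W_D)=b_D:=\opc\,C_{\sHR_N}(D)$ for all $1\le k\le N-1$. Since the characters of $\sZNZ$ are one-dimensional, inverting the character table shows that such a $W$ with integral multiplicities exists if and only if $\tfrac1N(a_D-b_D)\in\ZZ$ and $\tfrac1N\bigl(a_D+(N-1)b_D\bigr)\in\ZZ$ for every $D$, which is equivalent to $a_D\in\ZZ$, $b_D\in\ZZ$ and $a_D\equiv b_D\pmod N$. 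Because $6\HGH(D)\in\ZZ$ for all $D$, the condition $a_D\in\ZZ$ is automatic, so $\opc^\eop(\sG)$ is the least positive integer meeting the remaining two.

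Next I would show the congruence $a_D\equiv b_D\pmod N$ is forced once $b_D\in\ZZ$. From (\ref{eqn:prp-fcs:msRN}) one computes $\sHR_N=\tfrac{12N}{N^2-1}\sHGH_N-\tfrac{12}{N-1}\sHGH$ for prime $N$. As $N^2-1$, $12$ and $\dH_N$ are coprime to $N$, Lemma \ref{lem:cln-ces:cNHNDdNHCND} makes $\HGH_N(D)$ $N$-integral, so the first term is divisible by $N$ and the second is $\equiv 12\HGH(D)\pmod N$; hence $C_{\sHR_N}(D)\equiv 12\HGH(D)\pmod N$. Consequently $a_D-b_D=\opc\bigl(12\HGH(D)-C_{\sHR_N}(D)\bigr)$ has positive $N$-adic valuation, and being an integer once $b_D\in\ZZ$ it is divisible by $N$. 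Therefore $\opc^\eop(\sG)=\min\{\opc\in\ZZ^+\mid \opc\,\sHR_N\in\JJ_{2,1}(N)_\ZZ\}$ (cf.\ (\ref{eqn:prp-jac:JJkmGamma0JNZZ})), the least multiplier clearing the denominators of the Fourier coefficients of $\sHR_N$.

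To evaluate this minimum I would use Lemma \ref{lem:cln-ces:ceshcn} to trade $\sHGH$ for $\sHCE_N$, obtaining $\sHR_N=\tfrac{\dH_N}{\nH_N}\sHGH_N-\tfrac{\dC_N}{\nC_N}\sHCE_N$, so that $C_{\sHR_N}(D)=\tfrac{\dH_N\HGH_N(D)}{\nH_N}-\tfrac{\dC_N\HCE_N(D)}{\nC_N}$ with both numerators integral by Lemma \ref{lem:cln-ces:cNHNDdNHCND}. A quick look at the $2$- and $3$-adic valuations of $\tfrac{N+1}{6}$ and $\tfrac{N-1}{12}$ gives $\gcd(\nH_N,\nC_N)=1$ for odd $N$, whence the reduced denominator of $C_{\sHR_N}(D)$ factors as $\tfrac{\nH_N}{\gcd(\dH_N\HGH_N(D),\,\nH_N)}\cdot\tfrac{\nC_N}{\gcd(\dC_N\HCE_N(D),\,\nC_N)}$. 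Taking the least common multiple over all $D$ then gives $\opc^\eop(\sG)=\tfrac{\nH_N}{\gcd(g_H,\nH_N)}\cdot\tfrac{\nC_N}{\gcd(g_C,\nC_N)}$, where $g_H$ and $g_C$ denote the greatest common divisors of the integer families $\dH_N\HGH_N(D)$ and $\dC_N\HCE_N(D)$. This yields the upper bound $\opc^\eop(\sG)\mid\nH_N\nC_N$ at once, so in view of the identity $\opc^\eop(N)=\nH_N\nC_N$ the proof reduces to the coprimalities $\gcd(g_H,\nH_N)=1$ and $\gcd(g_C,\nC_N)=1$.

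I expect these coprimalities to be the main obstacle: for each prime $p\mid\nH_N$ I must produce a discriminant $D$ with $p\nmid\dH_N\HGH_N(D)$, and similarly for $p\mid\nC_N$ with $\dC_N\HCE_N(D)$. Here I would substitute the explicit evaluations of Lemma \ref{lem:HCohNDHNDHD} at well-chosen fundamental discriminants --- the ramified value $D=-N$ together with discriminants realizing each value of $\legendre{D}{N}$ --- to control the relevant $p$-adic valuations. The $\nC_N$-factor is essentially the assertion that the Cohen--Eisenstein series, equivalently (by Proposition \ref{pro:prp-fcs:JJ21NS21N}) the weight-$2$ Eisenstein series for $\Gamma_0(N)$, has denominator exactly $\nC_N=\#J_0(N)(\QQ)_\tor$, which is Mazur's theorem \cite{MR488287}; the $\nH_N$-factor requires the analogous non-vanishing modulo $p$ of the coefficients of $\sHGH_N$, checked directly from Lemma \ref{lem:HCohNDHNDHD}. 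Finally I would dispose of the excluded small primes $N=2$ and $N=3$ by direct computation of $\sHR_2$ and $\sHR_3$, confirming $\opc^\eop(\sG)=1$ and $2$ respectively, in agreement with (\ref{eqn:res-opt:opceop}).
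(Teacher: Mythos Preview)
Your reduction is correct and closely parallels the paper's: showing that the module exists precisely when $\opc\,\sHR_N$ has integer Fourier coefficients, rewriting $\sHR_N=\tfrac{\dH_N}{\nH_N}\sHGH_N-\tfrac{\dC_N}{\nC_N}\sHCE_N$, and using $\gcd(\nH_N,\nC_N)=1$ to factor the problem into two independent coprimality conditions $\gcd(g_H,\nH_N)=1$ and $\gcd(g_C,\nC_N)=1$. The upper bound $\opc^\eop(\sG)\mid\nH_N\nC_N=\opc^\eop(N)$ follows, as you say.

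The gap is in how you propose to verify the two coprimalities. Your appeal to Mazur for $\gcd(g_C,\nC_N)=1$ is a misidentification: Mazur's theorem gives $\#J_0(N)(\QQ)_\tor=\nC_N$, and the related Eisenstein-ideal statement is about congruences between $\sHCE_N$ and cusp forms modulo $\nC_N$, not about the gcd of the integers $\dC_N\HCE_N(D)$ being coprime to $\nC_N$. Concretely, after Lemma~\ref{lem:HCohNDHNDHD} you need, for each prime $p\mid\nC_N$, a fundamental $D<0$ with $\left(\tfrac{D}{N}\right)=-1$ and $p\nmid\HGH(D)$; no part of \cite{MR488287} supplies this. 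Likewise, the claim that $\gcd(g_H,\nH_N)=1$ can be ``checked directly from Lemma~\ref{lem:HCohNDHNDHD}'' fails in general: for $N\equiv 3\pmod 4$ the obvious test values $D=-3,-4$ give either $\HGH_N(D)=0$ or a value sharing a factor with $\nH_N$, and the ramified choice $D=-N$ yields $\dH_N h(-N)$, which need not be coprime to $\nH_N$.

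The missing input, which the paper invokes, is Theorem~A of Wiles \cite{MR3404031}: for any odd prime $\ell$ and any prescribed splitting behaviour at $N$, there exists an imaginary quadratic field whose class number is prime to $\ell$. Combined with genus theory (to handle $\ell=2$) and the explicit evaluations $C^\eop_N(-4)=\dH_N\nC_N$ (for $N\equiv 1\pmod 4$) or $C^\eop_N(-4)=-\tfrac12\dC_N\nH_N$ (for $N\equiv 3\pmod 4$), this produces the required coprime pair of Fourier coefficients of $\opc^\eop(N)\sHR_N$. Your framework would accommodate this argument without change, but the arithmetic input is essential and cannot be replaced by Mazur's theorem or by inspecting finitely many small discriminants.
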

\begin{proof}
We first show that $\opc^\eop(\sG)$ divides $\opc^\eop(N)$, by verifying that
there exists a virtual graded $\sZNZ$-module $W$ such that 
$\phi^W_\sg=\opc^\eop(N)\sHR_{o(\sg)}$ 
for $\sg\in \sZNZ$. For this we require to check that $\opc^\eop(N)\sHR_1$ and $\opc^\eop(N)\sHR_N$ have integer coefficients, and satisfy the congruence 
\begin{gather}\label{eqn:res-opt:mstarNR1equivmstarNRNmodN}
\opc^\eop(N)\sHR_1
\equiv 
\opc^\eop(N)\sHR_N
\xmod N.
\end{gather} 

To carry out this check we take $\opc=\opc^\eop(N)$ in the last identity in each of (\ref{eqn:prp-fcs:phiWse}--\ref{eqn:prp-fcs:phiWsg-spec}), 
and apply Lemma \ref{lem:cln-ces:ceshcn} 
in order to rewrite the $\sHGH$ appearing in terms of $\sHGH_N$ and $\sHCE_N$. This yields
\begin{gather}
	\opc^\eop(N)\sHR_1
	=\label{eqn:res-opt:mstarNmsR1}
	(N+1)\dH_N\nC_N\sHGH_N+(N-1)\dC_N\nH_N\sHCE_N,
	\\
	\opc^\eop(N)\sHR_N 
	=\label{eqn:res-opt:mstarNmsRN}
	\dH_N\nC_N\sHGH_N-\dC_N\nH_N\sHCE_N,
	\\
	\opc^\eop(N)\sHR_1
	-
	\opc^\eop(N)\sHR_N 
	=\label{eqn:res-opt:mstarNmsR1-mstarNmsRN}
	N\dH_N\nC_N\sHGH_N+N\dC_N\nH_N\sHCE_N,
\end{gather}
where $\dH_N$ and $\dC_N$ are as in (\ref{eqn:prp-cln:dHNdCN}), and $\nH_N$ and $\nC_N$ are as in (\ref{eqn:res-opt:nHNnCN}). 
Then comparison with Lemma \ref{lem:cln-ces:cNHNDdNHCND} confirms that all the coefficients in (\ref{eqn:res-opt:mstarNmsR1}--\ref{eqn:res-opt:mstarNmsRN}) are integers, and all the coefficients in (\ref{eqn:res-opt:mstarNmsR1-mstarNmsRN}) are integers divisible by $N$. 

We have shown that $\opc^\eop(\sG)$ divides $\opc^\eop(N)$. Thus we have $\opc^\eop(\sG)=\opc^\eop(N)$ for $N=2$ and for $N=5$, because in these cases $\opc^\eop(N)=1$. More generally, we may verify that $\opc^\eop(\sG)=\opc^\eop(N)$ by finding a pair of coprime Fourier coefficients of $\opc^\eop(N)\sHR_N$. For the remainder of this proof, let us write $C^\eop_N(D)$ for 
the coefficient of $q^ny^s$ in $\opc^\eop(N)\sHR_N$, when $D=s^2-4n$. Then for $D$ negative and fundamental we have
\begin{gather}\label{eqn:res-opt:torEis-cff}
	C^\eop_N(D)=	
	\dH_N \nC_N \left(1+\left(\frac{D}{N}\right)\right) \HGH(D) - \frac12 \dC_N\nH_N \left(1-\left(\frac{D}{N}\right)\right) \HGH(D),
\end{gather}
according to (\ref{eqn:res-opt:mstarNmsRN}) and Lemma \ref{lem:HCohNDHNDHD}. Thus we obtain $\opc^\eop(\sG)=\opc^\eop(N)$ for $N=3$, for example, by using (\ref{eqn:res-opt:torEis-cff}) to compute that $C^\eop_3(-3)=-1$.
Our main tool for handling more general $N$ will be the application of Theorem A from \cite{MR3404031}. We will also 
freely apply the fact that 
$\dH_N$ and $\nH_N$ 
are coprime by construction
(cf.\ (\ref{eqn:prp-cln:dHNdCN}), (\ref{eqn:res-opt:nHNnCN})), 
and similarly for 
$\dC_N$ and $\nC_N$, and $\nH_N$ and $\nC_N$,
and even 
$\dH_N$ and $\dC_N$ 
when $N>3$.

Suppose now that $N>5$ is prime, and $N\equiv 1\xmod 4$. Suppose also that 
$D_{-1}$ is a negative fundamental discriminant such that $(\frac{D_{-1}}N)=-1$.
Then according to (\ref{eqn:res-opt:torEis-cff}) 
we have 
\begin{gather}\label{eqn:res-opt:torEis-CND-1}
	C^\eop_N(D_{-1})=-\dC_N\nH_N\HGH(D_{-1}).
\end{gather}
We also have $C^\eop_N(-4)=\dH_N\nC_N$. 
It follows then, from the coprimalities amongst the $\dH_N$, $\dC_N$, $\nH_N$ and $\nC_N$, that if $p$ is a common prime divisor of all the coefficients of $\opc^\eop(N)\sHR_N$ then $p$ divides $\HGH(D_{-1})$ for all negative fundamental $D_{-1}$ such that $(\frac{D_{-1}}N)=-1$. 
So suppose that $p$ is a common prime divisor of $\HGH(D_{-1})$, for all negative fundamental $D_{-1}$ such that $(\frac{D_{-1}}N)=-1$. Then $p\neq 2$, because 
$\HGH(D_{-1})$ is odd by genus theory (cf.\ e.g.\ \cite{MR963648}) if we take $D_{-1}=-q$, for $q$ a prime such that $q\equiv 3\xmod 4$ and $(\frac{-q}N)=-1$. 
But $p$ is also not odd, for if so we get a contradiction by applying Theorem A of \cite{MR3404031}, with $S=S_-=\{N\}$ and $\ell=p$, and by taking $D_{-1}$ to be the discriminant of the field $L$ that that theorem produces.
So $\opc^\eop(\sG)=\opc^\eop(N)$ when $N\equiv 1\xmod 4$.

It remains to manage the case that $N>5$ satisfies $N\equiv 3\xmod 4$. For this we consider the negative fundamental $D_0$ that are divisible by $N$. For such $D_0$ we have
\begin{gather}\label{eqn:res-opt:torEis-CND0}
	C^\eop_N(D_0)
	=\left(\dH_N\nC_N-\frac{1}2\dC_N\nH_N\right)\HGH(D_0)
\end{gather}
by (\ref{eqn:res-opt:torEis-cff}), and we also have $C^\eop_N(-4)=-\frac{1}2\dC_N\nH_N$. 
Note that $\frac12 \dC_N$ is an integer when $N\equiv 3\xmod 4$ (cf.\ (\ref{eqn:prp-cln:dHNdCN})). Thus, similar to the above, we conclude that a common prime divisor $p$ of the coefficients of $\opc^\eop(N)\sHR_N$ divides $\HGH(D_0)$, for all negative fundamental $D_0$ that are divisible by $N$. 
Now $p$ is not even because $\HGH(D_0)$ is odd, by genus theory, for $D_0=-N$, and we rule out the possibility that $p$ is odd by again applying 
Theorem A of \cite{MR3404031}, but now with $S=S_0=\{N\}$. This completes the proof.
\end{proof}

\begin{rmk}
The full force of \cite{MR3404031} is not required to verify that $\opc^\eop(\sG)=\opc^\eop(N)$ for $N\equiv 5\xmod 12$ or $N\equiv 7\xmod 12$, since in these cases it suffices to compare $C^\eop_N(-3)$ and $C^\eop_N(-4)$. However, this observation does not seem to shorten the proof of Proposition \ref{pro:res-opt:torEis}.
\end{rmk}

We will use the next result 
to bridge the gap between Proposition \ref{pro:res-opt:torEis} and Theorem \ref{thm:res-opt:tor}.
The argument we give utilizes quaternion algebras, and is similar in essence to the proof of Theorem 2.1 in \cite{MR3762695}.
\begin{lem}\label{lem:res-opt:cuspformcong}
Let $N$ be prime. 
Then 
there exists a cuspidal Jacobi form 
$\varphi_N\in S_{2,1}(N)$
that has rational integer Fourier coefficients and satisfies 
\begin{gather}\label{eqn:res-opt:cuspformcong}
\dC_N\left(\sHCE_N-\frac{N-1}{24}\right)\equiv \varphi_N\xmod{\nC_N}.
\end{gather}
\end{lem}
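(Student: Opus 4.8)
The plan is to recognize the desired relation as an instance of the Eisenstein congruence attached to $\Gamma_0(N)$, and to realize it concretely through the quaternionic (supersingular) description of $J_{2,1}(N)$, in the spirit of Theorem~2.1 of \cite{MR3762695}. First I would set up the definite quaternion algebra $\mc{B}$ over $\QQ$ ramified exactly at $N$ and $\infty$, and let $E_1,\dots,E_n$ be representatives for the isomorphism classes of supersingular elliptic curves over $\overline{\FF}_N$, with endomorphism rings $R_i=\End(E_i)$ (maximal orders in $\mc{B}$) and weights $w_i:=\tfrac12\#R_i^\times$. On the free $\ZZ$-module $\mc{X}=\bigoplus_i\ZZ e_i$ spanned by the points $e_i$ one has the degree map $\deg\colon e_i\mapsto1$, and to each $e_i$ I would attach the ternary theta series $\vartheta_i$ built from the trace-zero part $S_i^0$ of $S_i=\ZZ+2R_i$; these are holomorphic Jacobi forms of weight $2$ and index $1$ for $\GammaOJ(N)$ with rational integer Fourier coefficients, assembling into a $\ZZ$-linear theta map $\vartheta\colon\mc{X}\to J_{2,1}(N)$, $e_i\mapsto\vartheta_i$.

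The structural inputs I would invoke are, first, the Eichler--Deuring/Gross theory underlying the $N$-prime case of Proposition~\ref{pro:prp-fcs:msHNmock}: $\vartheta$ is Hecke-equivariant, it carries the degree-zero submodule $\mc{X}^0=\ker(\deg)$ into the cusp forms $S_{2,1}(N)$, and the weighted sum $\sum_i\tfrac1{2w_i}\vartheta_i$ equals $\sHCE_N$, with constant term $\sum_i\tfrac1{2w_i}=\tfrac{N-1}{24}=\HCE_N(0)$ reproducing the Eichler mass. Second, I would use the integrality $\dC_N\HCE_N(D)\in\ZZ$ of Lemma~\ref{lem:cln-ces:cNHNDdNHCND}, together with the elementary identity $\dC_N=\lcm_i w_i$ for $N>3$ prime (a case check against $\dC_N=\den(\tfrac{N-1}{12})$ over the residues of $N$ modulo $12$). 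The latter guarantees that the rational Eisenstein eigenvector $e:=\sum_i\tfrac1{w_i}e_i$ clears to an integral Eisenstein element $x_{\rm E}\in\mc{X}$ with $\deg(x_{\rm E})=\dC_N\cdot\tfrac{N-1}{12}=\nC_N$, which is precisely the order $\#J_0(N)(\QQ)_\tor=\nC_N$ supplied by Mazur \cite{MR488287}.

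With these in hand I would take a degree-zero perturbation $x_0:=x_{\rm E}-\nC_N e_1\in\mc{X}^0$ by a single supersingular point, and set $\varphi_N:=\vartheta(x_0)$. Then $\varphi_N$ lies in $S_{2,1}(N)$ with rational integer Fourier coefficients, and $\varphi_N=\vartheta(x_{\rm E})-\nC_N\vartheta_1\equiv\vartheta(x_{\rm E})\pmod{\nC_N}$, where $\vartheta(x_{\rm E})$ is an integer multiple of $\sHCE_N$ whose constant term equals $\deg(x_{\rm E})=\nC_N\equiv0$. Subtracting this constant term and comparing nonconstant coefficients against $\dC_N\bigl(\sHCE_N-\tfrac{N-1}{24}\bigr)$, whose coefficients $\dC_N\HCE_N(D)$ are integers by Lemma~\ref{lem:cln-ces:cNHNDdNHCND}, would then yield the asserted congruence~(\ref{eqn:res-opt:cuspformcong}).

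The main obstacle is pinning down the exact multiplier $\dC_N$. The weighted mass identity naturally produces $\vartheta(x_{\rm E})$ as an \emph{even} multiple of $\sHCE_N$: both the half-weights $\tfrac1{2w_i}$ and the fact that the trace-zero representation numbers $\#\{s\in S_i^0\mid\Nm(s)=m\}$ occur in $\pm$-pairs contribute factors of $2$, whereas the target congruence is with the bare $\dC_N$. Reconciling this is exactly the content of Mazur's Eisenstein congruence $a_\ell\equiv\ell+1\pmod{\nC_N}$ for the relevant eigenforms, and it is here that the coincidence $\nC_N=\#J_0(N)(\QQ)_\tor$ is essential: the Hecke-module structure of $\mc{X}$, rather than a naive coefficient-by-coefficient comparison, is what forces the reduction of the Eisenstein element into the cuspidal part with the correct scalar (the pointwise approach already fails for $N=17$, where $\nC_N=4$ is even). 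Carefully tracking the $\ZZ+2R_i$ normalization and the half-weights $\tfrac{\dC_N}{2w_i}$ through this Hecke-module reduction is the delicate heart of the argument; the degenerate primes $N=2,3$, where $\nC_N=1$ and the congruence is vacuous (one may take $\varphi_N=0$), are dispatched immediately.
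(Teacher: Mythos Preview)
Your overall strategy matches the paper's almost exactly: the quaternion algebra $\mc{B}$ ramified at $N$ and $\infty$, the supersingular basis $\{e_i\}$ for $\mc{X}$, the ternary theta series $\vartheta_i$ on $S_i^0=(\ZZ+2R_i)^0$, the Eisenstein element $x_{\rm E}=\sum_i\frac{\dC_N}{w_i}e_i$ with $\deg(x_{\rm E})=\nC_N$, and the passage to a cusp form by subtracting $\nC_N$ copies of a single basis vector. The paper carries this out in the Kohnen plus space $M_{\frac32}^+(4N)$ rather than directly in $J_{2,1}(N)$, but that is cosmetic.

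The gap is in your final paragraph. You correctly spot the factor-of-$2$ mismatch---with the normalization $\vartheta(e_i)=\vartheta_i$ one obtains $\vartheta(x_{\rm E})=2\dC_N\sHCE_N$ rather than $\dC_N\sHCE_N$---but your proposed resolution through Mazur's Eisenstein congruence and the Hecke-module structure of $\mc{X}$ is both vague and unnecessary, and you do not actually carry it out. The paper's fix is elementary and entirely pointwise: the very $\pm$-pairing you mention shows that every \emph{non-constant} Fourier coefficient of $\vartheta_i$ is even, so one simply defines the theta map by $\vartheta(e_i):=\tfrac12\vartheta_i$. With this normalization $\vartheta(x_{\rm E})$ has integer non-constant coefficients (each $\dC_N/w_i\in\ZZ$), constant term exactly $\tfrac12\nC_N$, and equals $\dC_N\sHCE_N$ on the nose. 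Then $\vartheta(x_{\rm E})-\tfrac12\nC_N\vartheta_i=\vartheta(x_{\rm E})-\nC_N\cdot\tfrac12\vartheta_i$ is an integral cusp form whose non-constant coefficients are congruent modulo $\nC_N$ to those of $\dC_N\sHCE_N$, which is precisely the congruence~(\ref{eqn:res-opt:cuspformcong}). No appeal to Mazur is needed, and the argument works uniformly, including for $N=17$. (Incidentally, the vacuous cases are all primes with $\nC_N=1$, namely $N\in\{2,3,5,7,13\}$, not only $N=2,3$.)
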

\begin{proof}
Recall that $\frac{N-1}{24}$ is the constant term of $\sHCE_N$ (cf.\ (\ref{eqn:prp-cln:HCohN}--\ref{eqn:prp-cln:msHCohN})). 
So the left-hand side of (\ref{eqn:res-opt:cuspformcong}) has integer coefficients according to Lemma \ref{lem:cln-ces:cNHNDdNHCND}. If $N\in \{2,3,5,7,13\}$ then $S_{2,1}(N)=\{0\}$, but $\nC_N=1$ in these cases (\ref{eqn:res-opt:nHNnCN}) so $\varphi_N=0$ satisfies the required congruence. 
So let us henceforth assume that $N=11$ or $N\geq 17$. Then the genus of $X_0(N)$ is greater than $0$, and $S_{2,1}(N)\neq\{0\}$. 

To proceed we follow 
\cite{MR1896237} in letting $\mathcal{X}$ denote the free $\ZZ$-module generated by the singular points of the geometric fibre of $X_0(N)$ in characteristic $N$. We denote these singular points $\{e_i\}_{i\in I}$, where $I$ is some index set with size 
exactly $1$ more than the genus of $X_0(N)$. 
The $e_i$ are in natural correspondence with the isomorphism classes of supersingular elliptic curves in characteristic $N$. 

Next let $\mathcal{B}$ be a quaternion algebra over $\QQ$ ramified only at $N$ and $\infty$. For $i\in  I$ let $E_i$ be a supersingular elliptic curve in the class corresponding to $e_i$, and set $R_i:=\End(E_i)$. 
Then $R_i$ may be identified with a maximal order in $\mathcal{B}$ in such a way that the norm $\Nm(b)$ of $b\in R_i$ is the degree of $b$ when regarded as an isogeny $E_i\to E_i$ (see \S\S~1--2 of \cite{MR894322}). Set $S_i:=\ZZ+2R_i\subset\mathcal{B}$ and let $S_i^0$ be the set of elements of $S_i$ with trace zero. Then from \S~12 of \cite{MR894322} we obtain that the theta series
\begin{gather}\label{eqn:varthetai}
	\vartheta_i(\tau):=\sum_{b\in S_i^0}q^{\Nm(b)}
\end{gather}
belongs to the Kohnen plus space 
$M_{\frac32}^+(4N)$ 
of modular forms of weight $\frac32$ for $\Gamma_0(4N)$. 
Thus we may consider the $\ZZ$-linear map $\vartheta:\mathcal{X}\to M_{\frac32}^+(4N)$ defined by setting 
$\vartheta(e_i):=\frac12\vartheta_i$.
Note that the image of $\vartheta$ is composed of modular forms whose Fourier coefficients are rational integers, except possibly for their constant terms, which generally lie in $\frac12\ZZ$.

For $i\in I$ define $w_i$ to be half the number of invertible elements of $R_i$. Then we have
\begin{gather}\label{eqn:res-opt:prodwisumwi}
\prod_{i\in I}w_i=\dC_N=\den\left(\frac{N-1}{12}\right),\quad
\sum_{i\in I}\frac1{w_i}=\frac{\nC_N}{\dC_N}=\frac{N-1}{12}
\end{gather} 
(cf.\ (\ref{eqn:prp-cln:dHNdCN}), (\ref{eqn:res-opt:nHNnCN})),
according to \S~1 of \cite{MR894322} (or \S~3 of \cite{MR1896237}), 
and it follows that the expression
\begin{gather}\label{eqn:res-opt:xE}
x_{\rm E}:=\sum_i \frac{\dC_N}{w_i}e_i 
\end{gather} 
defines an element of $\mc{X}$. 
In particular then the Fourier coefficients of $\vartheta(x_{\rm E})$ are integers, except possibly for the constant term. 
We compute
\begin{gather}\label{eqn:res-opt:varthetaxE}
\vartheta(x_E) = \frac{1}{2}\nC_N + O(q)
\end{gather} 
by applying (\ref{eqn:prp-cln:dHNdCN}) and (\ref{eqn:res-opt:nHNnCN}), 
and the second part of (\ref{eqn:res-opt:prodwisumwi}).  

It follows from the methods of \cite{MR660784} that $M_{\frac32}^+(4N)$ is isomorphic to $M_2(N)$ as a Hecke algebra module, 
so in particular, since $N$ is prime, a modular form in either space is cuspidal as soon as it vanishes at the infinite cusp
(cf.\ the proof of Proposition \ref{pro:prp-fcs:JJ21NS21N}). 
Thus, for any $i\in I$ we have by (\ref{eqn:res-opt:varthetaxE}) 
that the function 
$\vartheta(x_{\rm E})-\frac{1}2\nC_N\vartheta_{i}$ 
(cf.\ (\ref{eqn:varthetai}))
is a cusp form with integer Fourier coefficients that is congruent to 
$\vartheta(x_{\rm E})-\frac{1}2\nC_N$ 
modulo $\nC_N$. 
So fix a choice of $i\in I$, and let $C_{\vf_N}(n)$ denote the coefficient of $q^n$ in $\vartheta(x_{\rm E})-\frac{1}2\nC_N\vartheta_{i}$. Then 
\begin{gather}\label{eqn:res-opt:cuspformcong-msG}
\varphi_N(\tau,z):=\sum_{n,s}C_{\vf_N}(s^2-4n)q^ny^s
\end{gather}
is the cuspidal Jacobi form we seek. 
\end{proof}

We are almost ready to 
prove Theorem \ref{thm:res-opt:tor}.
As a final act of preparation 
we 
choose
an inverse $\tilde{N}$ for $N$ modulo $\nC_N$ (\ref{eqn:res-opt:nHNnCN}), and
define
\begin{gather}\label{eqn:res-opt:msHZNZsg}
	\ms{H}^\sZNZ_\sg:=
	\begin{cases}
		\nH_N\sHR_1&\text{ for $o(\sg)=1$,}\\
		\nH_N\sHR_N-\nH_N\frac{N\tilde{N}}{\nC_N}\varphi_N&\text{ for $o(\sg)=N$},
	\end{cases}
\end{gather}
for $\sg\in \sZNZ$, where 
$\varphi_N$ 
is as in Lemma \ref{lem:res-opt:cuspformcong}.

\begin{proof}[Proof of Theorem \ref{thm:res-opt:tor}.]
We first show that $\opc^\opt_{2,1}(\sG)$ divides $\nH_N$, 
by verifying that there exists a $\sZNZ$-module 
$W$ as in (\ref{eqn:prp-fcs:W}) such that $\phi^W_\sg=\ms{H}^{\sZNZ}_\sg$ for $\sg\in \sZNZ$. 
We then show that $\opc^\opt_{2,1}(\sG)=\nH_N$ by a method similar to that which we used for Proposition \ref{pro:res-opt:torEis}, to verify that $\opc^\eop(\sG)=\opc^\eop(N)$. 
We conclude the proof by showing that the embedding (\ref{eqn:prp-fcs:mcLsG}) is surjective.

To ease the exposition let us write $\ms{H}^\sZNZ_{o(\sg)}$ for $\ms{H}^\sZNZ_\sg$. Then to verify that the $\ms{H}^{\sZNZ}_\sg$ are the graded traces arising from some virtual $\sZNZ$-module we must show that $\ms{H}^\sZNZ_1$ and $\ms{H}^\sZNZ_N$ have integer Fourier coefficients, and satisfy the congruence $\ms{H}^\sZNZ_1\equiv \ms{H}^\sZNZ_N\xmod N$. 
For the integrality we note that the forms
\begin{gather}
	-\nH_N+\dH_N\left(\sHGH_N+\frac{N+1}{12}\right),\label{eqn:res-opt:torexp1}
	\\
	\nH_N\frac{\dC_N}{\nC_N}\left(\sHCE_N-\frac{N-1}{24}\right)-\nH_N\frac{N\tilde{N}}{\nC_N}
	\label{eqn:res-opt:torexp2}
	\varphi_N,
\end{gather}
both have integer Fourier coefficients, according to Lemma \ref{lem:cln-ces:cNHNDdNHCND} for (\ref{eqn:res-opt:torexp1}), 
since $-\frac{N+1}{12}$ is the constant term of $\sHGH_N$ (cf.\ (\ref{eqn:prp-cln:HN0})), and by Lemma \ref{lem:res-opt:cuspformcong} for (\ref{eqn:res-opt:torexp2}). 
Now 
$\ms{H}^\sZNZ_N$ is just the sum of the forms in (\ref{eqn:res-opt:torexp1}--\ref{eqn:res-opt:torexp2}), and 
$\ms{H}^\sZNZ_1=\nH_N\sHR_1$ has integer coefficients because $\sHR_1=12\sHGH$ has integer coefficients by construction. 
For the congruence modulo $N$ it suffices to check that $\nC_N\ms{H}^\sZNZ_1\equiv \nC_N\ms{H}^\sZNZ_N\xmod N$ since $N$ is coprime to $\nC_N$ (\ref{eqn:res-opt:nHNnCN}). This congruence follows, in turn, from 
(\ref{eqn:res-opt:msHZNZsg}) {\color{black} together with} the congruence (\ref{eqn:res-opt:mstarNR1equivmstarNRNmodN}) that we proved for Proposition \ref{pro:res-opt:torEis}.

The forms $\ms{H}^\sZNZ_\sg$ defined in (\ref{eqn:res-opt:msHZNZsg}) are optimal by construction, 
so we have shown that $\opc^\opt_{2,1}(\sG)$ divides $\nH_N$. Similar to the situation in Proposition \ref{pro:res-opt:torEis}, we can verify that $\opc^\opt_{2,1}(\sG)=\nH_N$, for any given $N$, by demonstrating that $\ms{H}^\sZNZ_N$ has a pair of coprime coefficients. 
Thus $\opc^\opt_{2,1}(\sG)=\nH_N$ for $N=2$ and $N=5$, because $\nH_N=1$ in these cases according to (\ref{eqn:res-opt:nHNnCN}). 

To go further let us write $H^\sZNZ_N(D)$ for the coefficient of $q^ny^s$ in $\ms{H}^\sZNZ_N$, when 
$D=s^2-4n$. Then we have $H^\sZNZ_N(0)=-\nH_N$ by construction, and we obtain that 
\begin{gather}\label{eqn:res-opt:HsNZNND}
H^\sZNZ_N(D)\equiv \dH_N\HGH_N(D)\xmod \nH_N
\end{gather} 
for all negative $D$,
by
applying Lemma \ref{lem:res-opt:cuspformcong} and the fact that $\ms{H}^\sZNZ_N$ is the sum of the forms in (\ref{eqn:res-opt:torexp1}--\ref{eqn:res-opt:torexp2}). 
So a common divisor of the coefficients of $\ms{H}^\sZNZ_N$ divides the numerator of $\HGH_N(D)$ for all negative $D$, because
$\dH_N$ and $\nH_N$ are coprime by construction (\ref{eqn:prp-cln:dHNdCN}), (\ref{eqn:res-opt:nHNnCN}). 

We have 
$\HGH_N(D)=(1+(\frac{D}N))\HGH(D)$ 
by Lemma \ref{lem:HCohNDHNDHD}
if $D$ is negative and fundamental. 
So $\opc^\opt_{2,1}(\sG)=\nH_N$ when $N\equiv 1\xmod 4$, because then $\HGH_N(-4)=(1+1)\frac12=1$. To see that $\opc^\opt_{2,1}(\sG)=\nH_N$ when $N\equiv 3\xmod 4$ we proceed as in the last paragraph of the proof of Proposition \ref{pro:res-opt:torEis}. The analogue of (\ref{eqn:res-opt:torEis-CND0}) is now $H^\sZNZ_N(D_0)=\dH_N\HGH(D_0)\xmod \nH_N$, for $D_0$ negative, fundamental and divisible by $N$, and we compare this to $H^\sZNZ_N(0)=-\nH_N$. We rule out the possibility that a common divisor 
is even by noting that $\HGH(-N)$ is odd, and we rule out the possibility of an odd common divisor by applying Theorem A of \cite{MR3404031} with $S=S_0=\{N\}$. This completes the verification that $\opc^\opt_{2,1}(\sG)=\nH_N$ (cf.\ (\ref{eqn:res-opt:opcoptnHN})). 

It remains to determine 
$\mc{L}^\opt_{2,1}(\sG)$. 
For this it suffices to show that the embedding (\ref{eqn:prp-fcs:mcLsG}) is surjective, so let $\lambda\in L_{2,1}(\sG)_0$. 
Then we may write $\lambda=\sum_{i,\chi}n_{i,\chi}\varphi_i\otimes \chi$ for some subset $\{\varphi_i\}\subset S_{2,1}(N)_\ZZ$, where the $n_{i,\chi}$ are integers such that
\begin{gather}\label{eqn:res-opt:nichi}
	\sum_{\chi\in\Irr(\sG)}n_{i,\chi}\chi(\se) =0
\end{gather}
for each $i$. Now let $W=\bigoplus_DW_D$ be the virtual graded $\sG$-module determined by setting
\begin{gather}\label{eqn:res-opt:WD}
	W_D:=\sum_{i,\chi}n_{i,\chi}C_{\varphi_i}(D)\chi
\end{gather}
for each $D$. Then $\phi^W_\sg$ is $0$-optimal of weight $2$ and index $1$ in case $o(\sg)=N$ since the $\varphi_i$ belong to $S_{2,1}(\#\sG)$, and also in case $o(\sg)=1$ since $\phi^W_\se$ vanishes identically on account of (\ref{eqn:res-opt:nichi}).
Thus $\phi^W$ belongs to $L_{2,1}(\sG)_0$. We have $\phi^W_\sg=\lambda_\sg$ for all $\sg\in \sG$ by our construction (\ref{eqn:res-opt:WD}) of $W$, so the map $W\mapsto \phi^W$ from $\mc{L}_{2,1}^\opt(\sG)$ to $L_{2,1}(\sG)_0$ is surjective, as we required to show. 
\end{proof}

We record the following consequence of Theorem \ref{thm:res-opt:tor}.
\begin{cor}\label{cor:res-opt:tor}
For $N$ prime and $\sG=\sZNZ$
the rank of  $\mc{L}^\opt_{2,1}(\sG)$ is $(N-1)\dim J_0(N)$.
\end{cor}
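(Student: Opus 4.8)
The plan is to extract the rank directly from the structural identification furnished by Theorem \ref{thm:res-opt:tor}. That theorem identifies $\mc{L}^\opt_{2,1}(\sG)$ with $L_{2,1}(\sG)_0$, which by (\ref{eqn:prp-fcs:L21sG0}) (cf.\ (\ref{eqn:res-opt:mcLoptS2NRG0})) is the lattice $S_2(N)\otimes R(\sG)_0$, realized inside $L_{2,1}(\sG)$. Since the rank of a tensor product of finitely generated free abelian groups is the product of the ranks, I would reduce the problem to computing $\rank_\ZZ R(\sG)_0$ and $\rank_\ZZ S_{2,1}(N)_\ZZ$ independently.

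For the representation-theoretic factor, I would note that $\sG=\sZNZ$ is abelian of order $N$, hence has exactly $N$ irreducible characters, all one-dimensional, so $\rank_\ZZ R(\sG)=N$ by (\ref{eqn:prp-opt:RsG}). Because every $\chi\in\Irr(\sG)$ satisfies $\chi(\se)=1$, the map $V\mapsto\tr(\se|V)$ of (\ref{eqn:prp-opt:trsgV}) is the augmentation homomorphism $\sum_\chi m_\chi(V)\chi\mapsto\sum_\chi m_\chi(V)$ from $R(\sG)$ onto $\ZZ$; it is surjective (witnessed by the trivial module) with kernel $R(\sG)_0$, giving $\rank_\ZZ R(\sG)_0=N-1$.

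For the cusp-form factor, I would use the Hecke-equivariant isomorphism $S_{2,1}(N)\cong S_2(N)$ of \cite{MR1637086} (see also \cite{MR660784}), valid for $N$ prime, to get $\dim_\CC S_{2,1}(N)=\dim_\CC S_2(N)=\genus(X_0(N))=\dim J_0(N)$, the last two equalities being standard. To convert this complex dimension into the $\ZZ$-rank of the integral lattice I would invoke the rationality of $S_2(\Gamma_0(N))$: it admits a basis of forms with rational Fourier coefficients, so $S_{2,1}(N)_\ZZ$ spans $S_{2,1}(N)$ over $\CC$ and thus has rank $\dim_\CC S_2(N)=\dim J_0(N)$.

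Multiplying the two ranks then yields $\rank_\ZZ\mc{L}^\opt_{2,1}(\sG)=(N-1)\dim J_0(N)$, as required. I do not anticipate any genuine difficulty: the whole computation is bookkeeping built on the identification already proved in Theorem \ref{thm:res-opt:tor}. The only point meriting a sentence of care is the passage from the complex dimension of $S_2(N)$ to the rank of its integral structure $S_{2,1}(N)_\ZZ$, which rests on the fact that the space of weight-two cusp forms for $\Gamma_0(N)$ has a rational basis and so is of full rank over $\ZZ$.
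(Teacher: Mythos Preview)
Your proposal is correct and follows essentially the same approach as the paper: both reduce to Theorem \ref{thm:res-opt:tor}, factor the rank as $(N-1)$ times $\rank_\ZZ S_{2,1}(N)_\ZZ$, and then identify the latter with $\dim J_0(N)$ via the Hecke isomorphism $S_{2,1}(N)\cong S_2(N)$. The only cosmetic difference is that the paper cites \cite{MR1981614} directly for the existence of an integral basis of $S_{2,1}(N)$, whereas you route through the rationality of $S_2(\Gamma_0(N))$; either justification suffices.
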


\begin{proof}
It follows from the main result of \cite{MR1981614} that the space $S_{2,1}(N)$ admits a basis composed of forms with rational integer coefficients, so
from Theorem \ref{thm:res-opt:tor} we conclude that the rank of $\mc{L}^\opt_{2,1}(\sG)$ is $(N-1)\dim S_{2,1}(N)$.
To obtain the desired result we 
note that $S_{2,1}(N)$ is isomorphic to $S_2(N)$ as a module for the Hecke algebra ${\bf T}_N$ according to 
 \cite{MR1247596} 
(cf.\ Proposition \ref{pro:prp-fcs:JJ21NS21N}).
In particular, these spaces have the same dimension, and the submodules of forms with integer coefficients have the same rank as $\ZZ$-modules, and these dimensions and ranks all coincide. The dimension of $S_2(N)$ is the same as that of $J_0(N)$ so we have $(N-1)\dim J_0(N)$ for the rank of $\mc{L}^\opt_{2,1}(\sG)$, as required.   
\end{proof}

\subsection{Arithmetic}\label{sec:res-art}

In this section we derive a consequence of the classification
of optimal modules for cyclic groups of prime order, Theorem \ref{thm:res-opt:tor}, for the arithmetic geometry of imaginary quadratic twists of modular abelian varieties. 

Recall that $J_0(N)$ denotes the Jacobian of the modular curve defined by $\Gamma_0(N)$, regarded as an abelian variety defined over $\QQ$ (cf.\ \S~\ref{sec:res-opt}). To formulate the main result, Theorem \ref{thm:res-art:N=arb}, we follow \cite{MR482230} in defining an {\em optimal quotient} of $J_0(N)$ to be an abelian variety $A$, also defined over $\QQ$, which admits a surjective map $J_0(N)\to A$ with connected kernel. For $A$ an abelian variety and $D<0$ we define the {\em $D$-twist} of $A$, to be denoted $A\otimes D$, by setting 
\begin{gather}\label{eqn:res-art:AotimesD}
A\otimes D:=A_F, 
\end{gather}
where $A_F$ is as in Definition 5.1 of \cite{MR2331769}, for $F=\QQ(\sqrt{D})$ (and $k=\QQ$).

\begin{thm}\label{thm:res-art:N=arb}
Let $N$ be a prime, let $D$ be a negative fundamental discriminant such that $(\frac{D}{N})=-1$, 
and let $p$ be a divisor of 
$\# J_0(N)(\QQ)_\tor$.
For such $N$, $D$ and $p$, if
\begin{gather}
\HGH(D)\not\equiv 0 \xmod 
p,
\end{gather}
then there exists an optimal quotient $A$ of $J_0(N)$ such that $A\otimes D$ has only finitely many rational points.
\end{thm}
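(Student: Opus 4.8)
The plan is to convert the Eisenstein congruence of Lemma~\ref{lem:res-opt:cuspformcong} into the nonvanishing of a central $L$-value, and then to invoke the rank-zero theorem of Kolyvagin (together with its extension to $\GL_2$-type abelian varieties by Kolyvagin--Logachev). Throughout I identify the assertion that $A\otimes D$ has only finitely many rational points with the assertion that $A\otimes D$ has Mordell--Weil rank $0$ over $\QQ$.

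First I would extract a nonvanishing Fourier coefficient. Reducing the congruence of Lemma~\ref{lem:res-opt:cuspformcong} coefficient-by-coefficient, and recalling that $\frac{N-1}{24}=\HCE_N(0)$ is exactly the removed constant term, the $D$-th Fourier coefficient $C_{\varphi_N}(D)$ of $\varphi_N$ satisfies $C_{\varphi_N}(D)\equiv \dC_N\HCE_N(D)\pmod{\nC_N}$ for every negative discriminant $D$. For $D$ negative, fundamental, with $\left(\frac{D}{N}\right)=-1$, Lemma~\ref{lem:HCohNDHNDHD} gives $\HCE_N(D)=\HGH(D)$. Since $p\mid\nC_N=\#J_0(N)(\QQ)_\tor$ while $\dC_N$ and $\nC_N$ are coprime by construction, the hypothesis $\HGH(D)\not\equiv 0\pmod p$ yields $C_{\varphi_N}(D)\not\equiv 0\pmod p$, and in particular $C_{\varphi_N}(D)\neq 0$.

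Next I would pass from the rational cusp form $\varphi_N$ to an eigenform. Because $N$ is prime, $S_{2,1}(N)\cong S_2(N)$ as Hecke modules (cf.\ Proposition~\ref{pro:prp-fcs:JJ21NS21N} and Corollary~\ref{cor:res-opt:tor}), and via the construction $\check h$ of (\ref{eqn:prp-jac:checkh}) the form $\varphi_N$ corresponds to a weight $\frac32$ cusp form in the Kohnen plus space $M_{\frac32}^+(4N)$, whose $|D|$-th coefficient is $C_{\varphi_N}(D)\neq 0$. Decomposing along the Shimura--Kohnen eigenbasis, this nonvanishing forces $c_G(|D|)\neq 0$ for at least one Hecke eigenform $G$ in the plus space whose Shimura lift is a weight $2$ newform $g$ of level $N$. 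By the Waldspurger--Kohnen--Zagier formula, under the sign hypothesis $\left(\frac{D}{N}\right)=-1$ (which fixes the local sign at $N$ so that the functional equation of $L(g\otimes D,s)$ has sign $+1$), the coefficient $c_G(|D|)$ is a nonzero explicit multiple of a square root of $L(g\otimes D,1)$; hence $c_G(|D|)\neq 0$ implies $L(g\otimes D,1)\neq 0$. Applying each $\sigma\in\Gal(\overline{\QQ}/\QQ)$ to the nonzero algebraic number $c_G(|D|)$ gives $c_{G^\sigma}(|D|)\neq 0$, whence $L(g^\sigma\otimes D,1)\neq 0$ for all conjugates, so $L(A_g\otimes D,1)=\prod_\sigma L(g^\sigma\otimes D,1)\neq 0$, where $A_g$ is the optimal quotient of $J_0(N)$ attached to $g$. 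Finally, Kolyvagin's theorem (and Kolyvagin--Logachev when $\dim A_g>1$) shows $A_g\otimes D$ has rank $0$, so $A=A_g$ has the required property.

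I expect the main obstacle to be the careful bookkeeping in the third step: matching the Fourier coefficient $C_{\varphi_N}(D)$ of our index-$1$ Jacobi form to the central value $L(g\otimes D,1)$ with the correct normalization and sign conventions, i.e.\ verifying that the hypothesis $\left(\frac{D}{N}\right)=-1$ is precisely the condition under which the Waldspurger correspondence delivers the implication $c_G(|D|)\neq 0\Rightarrow L(g\otimes D,1)\neq 0$, and then transporting this nonvanishing from a single eigenform across its full Galois orbit so as to control the abelian variety $A_g$ rather than merely the newform $g$.
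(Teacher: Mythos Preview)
Your proposal is correct and follows essentially the same route as the paper's proof: use the congruence of Lemma~\ref{lem:res-opt:cuspformcong} together with $\HCE_N(D)=\HGH(D)$ for $\left(\tfrac{D}{N}\right)=-1$ to force $C_{\varphi_N}(D)\neq 0$, pass to a Hecke eigenform, convert the nonvanishing coefficient into $L(g\otimes D,1)\neq 0$ via a Waldspurger-type formula, and conclude by Kolyvagin--Logach\"ev. The only cosmetic differences are that the paper stays on the Jacobi-form side and quotes Theorem~5.7 of \cite{MR1637086} directly (rather than passing through the Kohnen plus space), and the paper is terser about the Galois-orbit step---your explicit treatment of the conjugates $g^\sigma$ is in fact more careful than the paper's one-line claim that $L(A\otimes D,s)=L(f\otimes D,s)$.
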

\begin{proof}
For the course of this proof 
we let $\varphi_N\in S_{2,1}(N)$ be a cuspidal Jacobi  form as in (\ref{eqn:res-opt:torexp2}), and write 
$C_N(D)$ for the coefficient of $q^ny^s$ in 
the Fourier expansion 
(cf.\ (\ref{eqn:prp-fcs:phi})) 
of the form that appears in (\ref{eqn:res-opt:torexp2}) when $D=s^2-4n$. 
Then, as mentioned in the proof of Theorem \ref{thm:res-opt:tor}, the $C_N(D)$ are all rational integers, according to Lemma \ref{lem:res-opt:cuspformcong}. 
Furthermore, we have 
\begin{gather}\label{eqn:res-art:N=arb-CNDmodnN}
	C_N(D)\equiv \nH_N(\dC_N\HGH(D)-C_{\varphi_N}(D))\xmod \nC_N,
\end{gather}
where $C_{\varphi_N}(D)$
is as in (\ref{eqn:res-opt:cuspformcong-msG}), because $\HCE_N(D)=\HGH(D)$ under our hypothesis on $D$, according to Lemma \ref{lem:HCohNDHNDHD}.

Suppose, as in the statement of the theorem, that $p$ is a divisor of $\#J_0(N)(\QQ)_\tor$
that does not divide $\HGH(D)$. As mentioned after (\ref{eqn:res-opt:nHNnCN}), we have $\#J_0(N)(\QQ)_\tor=\nC_N$. Thus, since $\dC_N$ and $\nH_N$ are coprime to $\nC_N$ by construction (cf.\ (\ref{eqn:prp-cln:dHNdCN}), (\ref{eqn:res-opt:nHNnCN})), we conclude from (\ref{eqn:res-art:N=arb-CNDmodnN}) 
that 
$\nH_N\dC_N\HGH(D)$ is not divisible by $\nC_N$, and $C_{\varphi_N}(D)$ is not divisible by $\nC_N$ either. In particular, $C_{\varphi_N}(D)$ is not zero.
So there must be an eigenform $\varphi\in S_{2,1}(N)$, for the level $N$ Hecke algebra $\mathbf{T}_N$,
such that $C_\varphi(D)$ is non-zero, where $C_\varphi(s^2-4n)$ is the coefficient of $q^ny^s$ in the Fourier expansion of $\varphi$. 
As we have mentioned in the proof of Corollary \ref{cor:res-opt:tor}, the ${\bf T}_N$-modules $S_{2,1}(N)$ and $S_2(N)$ are isomorphic. 
Both spaces are spanned by newforms since $N$ is prime, so there is a uniquely determined newform $f\in S_2(N)$ corresponding to $\varphi$.

At this point we apply 
Theorem 5.7 of \cite{MR1637086}, taking $k=2$, $m=1$ and $M=N$ in loc.\ cit.,
to obtain that
\begin{gather}\label{eqn:res-art:GKZLval}
	\frac{|C_{\varphi}(D)|^2}{\langle \varphi,\varphi\rangle}
	=
	\frac{\sqrt{|D|}}{2\pi}
	\frac{L(f\otimes D,1)}{\langle f,f\rangle}
\end{gather}
(cf.\ Corollary 1 of \cite{MR783554}), where $f\otimes D$ is defined by requiring that 
\begin{gather}\label{eqn:res-art:fotimesD}
	(f\otimes D)(\tau) = \sum_n c_f(n)\left(\frac{D}{n}\right)q^n
\end{gather}
when
	$f(\tau)=
	\sum_n c_f(n)q^n$,
and the $L$-function special value $L(f\otimes D,1)$ may be defined by setting
\begin{gather}\label{eqn:res-art:LfotimesD1}
L(f\otimes D,1):={2\pi}\int_0^\infty (f\otimes D)(it){\rm d}t.
\end{gather}

Let $I_f$ be the annihilator of $f$ in the Hecke algebra ${\bf T}_N$. 
Then $A=J_0(N)/I_fJ_0(N)$ is an optimal quotient of $J_0(N)$ (i.e.\ $I_fJ_0(N)$ is connected), and we have $L(A\otimes D,s)=L(f\otimes D,s)$ by construction. So in particular, $L(A\otimes D,1)$ is not zero. The proof is completed by applying the main result of \cite{MR1036843}, which tells us that the group of rational points on $A\otimes D$ is finite unless $L(A\otimes D,1)$ vanishes.
\end{proof}

The conclusion of Theorem \ref{thm:res-art:N=arb} simplifies in the case that $\dim J_0(N)$ is one-dimensional (i.e., an elliptic curve). For example, taking $N=11$, so that 
$\# J_0(N)(\QQ)_\tor=5$
(cf.\ (\ref{eqn:res-opt:nHNnCN})), we obtain the following corollary, which appeared earlier in \cite{MR927171}. 
\begin{cor}\label{cor:res-art:N=11}
Suppose that $D<0$ is a fundamental discriminant such that $11$ is inert in the ring of integers of $\QQ(\sqrt{D})$, 
and the class number of $\QQ(\sqrt{D})$ is not divisible by $5$. 
Then the elliptic curve defined by
\begin{gather}\label{eqn:res-art:J0(11)D}
y^2 = x^3 - 13392D^2x -  1080432D^3	
\end{gather}
has only finitely many rational points.
\end{cor}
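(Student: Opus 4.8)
The plan is to obtain the corollary as the special case $N = 11$, $p = 5$ of Theorem \ref{thm:res-art:N=arb}. First I would record the arithmetic input for $N = 11$: from (\ref{eqn:res-opt:nHNnCN}) we have $\nC_{11} = \num\left(\frac{10}{12}\right) = \num\left(\frac{5}{6}\right) = 5$, and by the result of Mazur cited after (\ref{eqn:res-opt:nHNnCN}) this equals $\#J_0(11)(\QQ)_\tor$. Hence the only prime dividing $\#J_0(11)(\QQ)_\tor$ is $p = 5$, which fixes the choice of $p$ in the theorem.

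Next I would translate the two hypotheses of the corollary into those of Theorem \ref{thm:res-art:N=arb}. For $D$ a fundamental discriminant, the assertion that $11$ is inert in the ring of integers of $\QQ(\sqrt{D})$ is precisely the statement that $\left(\frac{D}{11}\right) = -1$, matching the condition $\left(\frac{D}{N}\right) = -1$. For the class number condition I would invoke the identity $\HGH(D) = \frac{1}{w_D} h(D)$ from \S\ref{sec:prp-cln}, in which $w_D \in \{1,2,3\}$ is coprime to $5$; therefore $5 \nmid h(D)$ holds if and only if $\HGH(D) \not\equiv 0 \pmod 5$. (The only discriminants with $w_D > 1$ are $D = -3$ and $D = -4$, and one checks directly that both satisfy $\left(\frac{D}{11}\right) = -1$ with $h(D) = 1$, so these boundary cases are consistent with the stated hypotheses.) Thus the hypotheses of the corollary are exactly those of Theorem \ref{thm:res-art:N=arb} with $N = 11$ and $p = 5$.

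Applying the theorem then yields an optimal quotient $A$ of $J_0(11)$ such that $A \otimes D$ has only finitely many rational points, and the remaining step is to identify $A$. Since $X_0(11)$ has genus one, $J_0(11)$ is itself an elliptic curve and $S_2(11)$ is spanned by a single newform $f$; in the construction $A = J_0(11)/I_f J_0(11)$ from the theorem's proof the annihilator $I_f$ is trivial, so $A = J_0(11)$. (Equivalently, any nonzero optimal quotient of a one-dimensional abelian variety is an isomorphic image, its kernel being connected of dimension zero.) As noted in the introduction (cf.\ (\ref{eqn:int-res:DtwistJ011})), $J_0(11)$ is the curve 11.a2 of \cite{lmfdb}, whose $D$-twist $J_0(11) \otimes D$ is the curve (\ref{eqn:res-art:J0(11)D}); the finiteness of $A \otimes D = J_0(11) \otimes D$ furnished by Theorem \ref{thm:res-art:N=arb} is then exactly the conclusion of the corollary. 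The only point that will require care is the passage through the factor $w_D$ in the divisibility hypothesis, together with the verification that the optimal quotient must be all of $J_0(11)$; I expect everything else to be direct specialization, with no serious obstacle.
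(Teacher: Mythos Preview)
Your proposal is correct and follows exactly the approach the paper takes: the corollary is obtained by specializing Theorem \ref{thm:res-art:N=arb} to $N=11$, using $\#J_0(11)(\QQ)_\tor=\nC_{11}=5$ and the fact that $J_0(11)$ is itself an elliptic curve so that the optimal quotient must be $J_0(11)$. Your write-up is simply more explicit than the paper's, which merely gestures at the specialization and the identification of (\ref{eqn:res-art:J0(11)D}) with $J_0(11)\otimes D$.
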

As noted in \S~\ref{sec:int-res} (see (\ref{eqn:int-res:DtwistJ011})), the elliptic curve defined by (\ref{eqn:res-art:J0(11)D}) is the $D$-twist of the modular Jacobian $J_0(11)$ (which is Elliptic Curve 11.a2 in \cite{lmfdb}). 
The reader may find some further analogues of Corollary \ref{cor:res-art:N=11} in \cite{MR1040993}. 

To conclude we underscore 
that the identity (\ref{eqn:res-art:N=arb-CNDmodnN}), which underpins the proof of Theorem \ref{thm:res-art:N=arb}, in turn depends upon the integrality of 
(\ref{eqn:res-opt:torexp2}), which underpins the proof of Theorem \ref{thm:res-opt:tor}. 
Thus it is that the arithmetic-geometric results of Theorem \ref{thm:res-art:N=arb} and Corollary \ref{cor:res-art:N=11} arise as consequences of the optimal module classification, 
Theorem \ref{thm:res-opt:tor},
that we establish in this work.

\clearpage


\setstretch{1.08}
\addcontentsline{toc}{section}{References}

\providecommand{\bysame}{\leavevmode\hbox to3em{\hrulefill}\thinspace}
\providecommand{\MR}{\relax\ifhmode\unskip\space\fi MR }
\providecommand{\MRhref}[2]{
  \href{http://www.ams.org/mathscinet-getitem?mr=#1}{#2}
}
\providecommand{\href}[2]{#2}

\end{document}